%%%%%%%%%%%%%%%%%%%%%%%%%%%%%%%%%%%
%%%%%%%%%%%%%%%%%%%%%%%%%%%%%%%%%%%
% 4/7/17: Version by Amit - modified and cut.
% 14/7/17: Version by Amit 
% 27/7/17: Version by Anton 
% 9/8/17: Version by Tobias
% 18/8/17: Version by Anton
% 20/8/17: Version by Amit
% 11/9/17: Version by Tobias
% 23/9/17: Version received from Amit
% 25/9/17: Version by Tobias
% 29/9/17: Version by Tobias - submitted to arXiv
% 29/9/17: Version by Anton (L 5.1) - submitted to JDE
% 9/10/17: Version by Anton typos
% 1/2/18:  Version by Anton small change in L3.7, Rem4.4 after report
% 13/2/18: Version by Anton small adaptions following the JDE-proofs
%%%%%%%%%%%%%%%%%%%%%%%%%%%%%%%%%%%
%%%%%%%%%%%%%%%%%%%%%%%%%%%%%%%%%%%

%pdflatex

\documentclass[11pt]{amsart}
\usepackage[latin1]{inputenc}
\usepackage{amsmath}
\usepackage{amsfonts}
\usepackage{amssymb}
\usepackage{graphicx}
\usepackage{fourier}
\usepackage{dsfont}
\usepackage{hyperref}
\usepackage{enumerate}
\usepackage{esint}
\usepackage{bm}
\usepackage{xcolor} % only for highlighting comment \anton
\usepackage{verbatim}        % for comments (Anton)

\usepackage{soul}

\newtheorem{theorem}{Theorem}[section]
\newtheorem{lemma}[theorem]{Lemma}
\newtheorem{prop}[theorem]{Proposition}
\newtheorem{remark}[theorem]{Remark}
\theoremstyle{definition}
\newtheorem{definition}[theorem]{Definition}

\newtheorem{corollary}[theorem]{Corollary}

\theoremstyle{remark}

\newcommand{\norm}[1]{\lVert#1\rVert}
\newcommand{\Norm}[1]{\left\lVert#1\right\rVert}
\newcommand{\abs}[1]{\left\lvert#1\right\rvert}
\newcommand{\pa}[1]{\left( #1 \right)}
\newcommand{\br}[1]{\left\lbrace #1\right\rbrace}

\newcommand{\R}{\mathbb{R}}

\newcommand{\N}{\mathbb{N}}

\numberwithin{equation}{section}

 % in the end change to \newcommand{\anton}{\textcolor{black}}
\newcommand{\amit}{\textcolor{black}} 

%%%%%%%%%%%%%%%%%%%%%%%%%%%%%%%%%%
% new commands by Anton:
\newcommand{\D}{{\bf D}}

\newcommand{\II}{{\bf I}}
\newcommand{\C}{{\bf C}}
\newcommand{\B}{{\bf B}}
\newcommand{\A}{{\bf A}}
\newcommand{\K}{{\bf K}}
\newcommand{\M}{{\bf M}}
\newcommand{\Q}{{\bf Q}}

\newcommand{\W}{{\bf W}}

\newcommand{\rank}{\operatorname{rank}}
\newcommand{\tr}{\operatorname{Tr}}
\renewcommand{\span}{\operatorname{span}}
\newcommand{\range}{\operatorname{Range}}
\renewcommand{\Re}{\operatorname{Re}}
%%%%%%%%%%%%%%%%%%%%%%%%%%%%%%%%%%

\title[Degenerate and Defective Fokker-Planck Equations]{On The Rates of Decay to Equilibrium in Degenerate and Defective Fokker-Planck Equations}
\author{Anton Arnold, Amit Einav \& Tobias W\"ohrer}
\address{Vienna University of Technology, Institute of Analysis and Scientific Computing, Wiedner Hauptstr. 8-10, A-1040 Wien, Austria}
\email{anton.arnold@tuwien.ac.at; aeinav@asc.tuwien.ac.at;}
\email{tobias.woehrer@tuwien.ac.at}
\thanks{The first author was partially supported by the FWF-funded SFB \#F65. 
The second author was supported by the Austrian Science Fund (FWF) grant M 2104-N32.
The first and the third authors were partially supported by the FWF-doctoral school ``Dissipation
and dispersion in nonlinear partial differential equations''.}

\subjclass[2010]{Primary 35Q84, 35H10; Secondary 35K10, 35B40, 47D07}
\keywords{Fokker-Planck equations, Spectral Theory, non-symmetric hypercontractivity, long time behaviour}

\begin{document}

\begin{abstract}We establish sharp long time asymptotic behaviour for a family of entropies to defective Fokker-Planck equations and show that, much like defective finite dimensional ODEs, their decay rate is an exponential multiplied by a polynomial in time. The novelty of our study lies in the amalgamation of spectral theory and a quantitative non-symmetric hypercontractivity result, as opposed to the usual approach of the entropy method.
\end{abstract}

\nocite{*} %REMOVE!
\maketitle
\section{Introduction}\label{sec:intro}
\subsection{Background}
The study of Fokker-Planck equations (sometimes also called Kolmogorov forward equations) has a long history - going back to the early 20th century. Originally, Fokker and Planck used their equation to describe Brownian motion in a PDE form, rather than its usual SDE representation. \\
In its most general form, the Fokker-Planck equation reads as
\begin{equation}\label{eq:fokker_planck_general}
\partial_t f(t,x) = \sum_{i,j=1}^d \partial_{x_ix_j}\pa{D_{ij}(x) f(t,x)}-\sum_{i=1}^d \partial_{x_i}\pa{A_i(x)f(t,x)},
\end{equation}
with $t>0,x\in\R^d$, and where $D_{ij}(x),A_i(x)$ are real valued functions, with $\D(x)=\pa{D_{ij}(x)}_{i,j=1,\dots,d}$ being a positive semidefinite matrix.\\
The Fokker-Planck equation has many usages in modern mathematics and phys\-ics, with connection to statistical physics, plasma physics, stochastic analysis and mathematical finances. For more information about the equation, we refer the reader to \cite{RiFP89}. Here we will consider a very particular form of \eqref{eq:fokker_planck_general} that allows degeneracies and defectiveness to appear.
\subsection{The Fokker-Planck Equation in our Setting}
In this work we will focus our attention on Fokker-Planck equations of the form:
\begin{equation}\label{eq:fokkerplanck}
\partial_t f(t,x)=Lf(t,x):=\text{div}\pa{\D\nabla f(t,x)+\C xf(t,x)}, \quad\quad t>0, x\in\R^d,
\end{equation}
with appropriate initial conditions, where the matrix $\D$ (the \textit{diffusion} matrix) and $\C$ (the \textit{drift} matrix) are assumed to be constant and real valued.\\ 
In addition to the above,  we will also assume the following: 
\begin{enumerate}\label{hyp1}
\item[(A)] $\D$ is a positive semidefinite matrix with  
$$1\le r:=\text{rank}\pa{\D} \leq d.$$
\item[(B)] All the eigenvalues of $\C$ have positive real part (this is sometimes called \textit{positively stable}).
\item[(C)] There exists no non-trivial $\C^T$-invariant subspace of $\text{Ker}\pa{\D}$ (this is equivalent to \emph{hypoellipticity} of \eqref{eq:fokkerplanck}, cf. \cite{Ho67}).
\end{enumerate}
Each of these conditions has a significant impact on the equation: 
\begin{itemize}
\item Condition (A) allows the possibility that our Fokker-Planck equation is degenerate ($r<d$). 
\item Condition (B) implies that the drift term confines the system. Hence it is crucial for the existence of a non-trivial steady state to the equation, and
\item  Condition (C) tells us that when $\D$ is degenerate, $\C$ compensates for the lack of diffusion in the appropriate direction and ``pushes'' the solution back to where diffusion happens.
\end{itemize}
Equations of the form \eqref{eq:fokkerplanck}, with emphasis on the degenerate structure (and hence $d\ge2$), have been extensively investigated recently (see \cite{AE},\cite{OPPS12}) and were shown to retain much of the structure of their non-degenerate counterpart. When it comes to the question of long time behavior, it has been shown in \cite{AE} that under Conditions (A)-(C) there exists a unique equilibrium state $f_\infty$ to \eqref{eq:fokkerplanck} with a unit mass (it was actually shown that the kernel of $L$ is one dimensional) and that the convergence rate to it can be explicitly estimated by the use of the so called \emph{(relative) entropy functionals}. Based on \cite{AMTU01,BaEmD85}, and denoting by $\R^+:=\br{x>0\;|\;x\in\R}$ and $\R_0^+:=\R^+\cup\br{0}$, we introduce these entropy functionals:
\begin{definition}\label{def-entropy}
We say that a function $\psi$ is a \textit{generating function for an admissible relative entropy} if $\psi \not\equiv 0$, $\psi\in C\pa{\R_0^+}\cap C^4\pa{\R^+}$, $\psi(1)=\psi^\prime(1)=0$, $\psi^{\prime\prime} > 0$ on $\R^+$ and 
\begin{equation}\label{eq:psicondition}
\pa{\psi^{\prime\prime\prime}}^2 \leq \frac{1}{2}\psi^{\prime\prime}\psi^{\prime\prime\prime\prime}.
\end{equation} 
For such a $\psi$, we define the \textit{admissible relative entropy} $e_\psi\pa{\cdot|f_\infty}$ to the Fokker-Planck equation \eqref{eq:fokkerplanck} with a unit mass equilibrium state $f_\infty$, as the functional
\begin{equation}\label{eq:defentropy}
e_\psi\pa{f|f_\infty}:= \int_{\R^d}\psi\pa{\frac{f(x)}{f_\infty(x)}}f_\infty(x)dx,
\end{equation}
for any non-negative $f$ with a unit mass.
\end{definition}
\begin{remark}\label{rem:about_entropies}
It is worth to note a few things about Definition \ref{def-entropy}:
\begin{itemize}
\item As $\psi$ is only defined on $\R^+_0$ the admissible relative entropy can only be used for non-negative functions $f$. This, however, is not a problem for equation \eqref{eq:fokkerplanck} as it propagates non-negativity. 
\item Assumption \eqref{eq:psicondition} is equivalent to the concavity of $(\psi'')^{-1}$ on $\R^+$.
\item  Important examples of generating functions include $\psi_1(y):=y\log y -y+1$ (the Boltzmann entropy) and $\psi_2(y):=\frac{1}{2}(y-1)^2$. \\Note that for $f\in L^2\pa{\R^d,f_\infty^{-1}}$ 
$$e_2(f|f_\infty)=\frac{1}{2}\norm{f-f_\infty}^2_{L^2\pa{\R^d,f_\infty^{-1}}}.$$
This means that up to some multiplicative constant, $e_2$ is the square of the (weighted) $L^2$ norm.
\end{itemize}
\end{remark}
A detailed study of the rate of convergence to equilibrium of the relative entropies for \eqref{eq:fokkerplanck} when $r<d$ was completed recently in \cite{AE}. Denoting by  $L^1_+\pa{\R^d}$ the space of non-negative $L^1$ functions on $\R^d$, the authors have shown the following:
\begin{theorem}\label{thm:Anton_Erb_rate}
Consider the Fokker-Planck equation \eqref{eq:fokkerplanck} with diffusion and drift matrices $\D$ and $\C$ which satisfy Conditions (A)-(C). Let 
\begin{equation}\label{eq:def_of_mu}
\mu:=\min\br{\Re\pa{\lambda}\,|\, \lambda\text{ is an eigenvalue of }\C}.
\end{equation}
Then, for any admissible relative entropy $e_\psi$ and a solution $f(t)$ to \eqref{eq:fokkerplanck} with initial datum $f_0\in L^1_+\pa{\R^d}$, of unit mass and such that $e_\psi(f_0|f_\infty)<\infty$ we have that:
\begin{enumerate}[(i)]
\item If all the eigenvalues from the set
\begin{equation}\label{eq:eigenvalueMu}
\{\lambda \mid \lambda \text{ is an eigenvalue of $\C$ and }\Re(\lambda)=\mu\}
\end{equation} are non-defective
\footnote{An eigenvalue is \textit{defective} if its geometric multiplicity is strictly less than its algebraic multiplicity. We will call the difference between these numbers the \textit{defect} of the eigenvalue.},
 then there exists a fixed geometric constant $c\ge1$, that doesn't depend on $f$, such that
\begin{equation*}%\label{eq:entropydecayAEnodef}
e_\psi(f(t)|f_\infty) \leq c e_\psi(f_0|f_\infty)e^{-2\mu t},\quad t\geq 0.
\end{equation*}
\item If one of the eigenvalues from the set \eqref{eq:eigenvalueMu}
is defective, then for any $\epsilon>0$ there exists a fixed geometric constant $c_{\epsilon}$, that doesn't depend on $f$, such that
\begin{equation}\label{eq:entropydecayAEdef}
e_\psi(f(t)|f_\infty) \leq c_{\epsilon} e_\psi(f_0|f_\infty)e^{-2(\mu-\epsilon) t},\quad t\geq 0.
\end{equation}
\end{enumerate}
\end{theorem}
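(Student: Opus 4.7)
My plan is a modification of the Bakry--Emery entropy method adapted to degenerate drift-diffusion. Writing $h:=f/f_\infty$, one has $e_\psi(f|f_\infty)=\int\psi(h)\,f_\infty\,dx$ and a direct computation yields
$$
\frac{d}{dt}e_\psi(f(t)|f_\infty)=-\int_{\R^d}\psi''(h)\,\nabla h^\top\D\,\nabla h\,f_\infty\,dx.
$$
The right-hand side is a degenerate Fisher information and cannot control $e_\psi$ on its own, so I would replace $\D$ by a symmetric positive definite matrix $\PP$ and work with the twisted Fisher information
$$
I_\psi^\PP(f|f_\infty):=\int_{\R^d}\psi''(h)\,\nabla h^\top\PP\,\nabla h\,f_\infty\,dx.
$$

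\medskip

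The core step is a matrix-analytic construction: I would build a symmetric positive definite $\PP$ such that $\PP\,\C+\C^\top\PP\ge 2\alpha\,\PP$, with $\alpha=\mu$ in case (i) and $\alpha=\mu-\epsilon$ in case (ii). Such a $\PP$ is assembled block-diagonally along the Jordan form of $\C$: each non-defective block with eigenvalue of real part $\mu$ admits a block of $\PP$ for which the Lyapunov inequality holds with $\alpha=\mu$, while defective Jordan blocks only admit such a $\PP$ with $\alpha$ strictly below $\mu$, and one compensates by rescaling the relevant Jordan basis by powers of $\epsilon$. This rescaling produces a family $\PP_\epsilon$ whose condition number blows up as $\epsilon\downarrow 0$; this is precisely the mechanism that forces the $\epsilon$-loss in the rate and makes the constant $c_\epsilon$ in (ii) unbounded. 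Condition (C) enters here to guarantee that $\PP$ (or $\PP_\epsilon$) can be chosen compatible with $\D$ in the sense that the twisted Fisher information is comparable to the standard $L^2$-Sobolev-weighted norm on $\nabla h$.

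\medskip

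With $\PP$ in hand, I would differentiate $I_\psi^\PP(f(t)|f_\infty)$ along the flow and integrate by parts; the algebraic Lyapunov inequality handles the drift contribution, while the admissibility condition \eqref{eq:psicondition} is exactly what is needed to absorb, via a pointwise Cauchy--Schwarz estimate, the cubic terms in $\psi'''$ that appear upon commuting $\nabla$ past the divergence. The outcome should be
$$
\frac{d}{dt}I_\psi^\PP(f(t)|f_\infty)\;\le\;-2\alpha\,I_\psi^\PP(f(t)|f_\infty),
$$
giving exponential decay of the twisted Fisher information at the sharp rate. The decay is then transferred to $e_\psi$ via the identity $\frac{d}{dt}e_\psi=-I_\psi$ combined with the comparison $I_\psi\ge \kappa\,I_\psi^\PP$ on the range of $\nabla h$ (valid for $\kappa>0$ depending on $\PP$), after a short-time hypoelliptic regularization — guaranteed by Condition (C) — that bounds $I_\psi^\PP(f(t_0)|f_\infty)$ by $e_\psi(f_0|f_\infty)$ for an arbitrarily small $t_0>0$. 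The main obstacle I foresee is the matrix-analytic step: producing a single $\PP$ that simultaneously achieves the sharp Lyapunov exponent and is compatible with the quadratic form structure demanded by admissibility is delicate, and in the defective case one must quantify carefully how $\lambda_{\min}(\PP_\epsilon)/\lambda_{\max}(\PP_\epsilon)$ degenerates with $\epsilon$ in order to arrive at the explicit constant $c_\epsilon$ of \eqref{eq:entropydecayAEdef}.
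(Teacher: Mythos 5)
Two remarks before the substantive one: the paper does not prove Theorem \ref{thm:Anton_Erb_rate} at all --- it is quoted from \cite{AE}, and the underlying method (a twisted Fisher information $I_\psi^{\PP}$ built from a positive definite matrix adapted to $\C$, attacked by the Bakry--\'Emery technique) is only described in \S\ref{sec:fisher}. Your proposal follows exactly that route: the Lyapunov-type construction of $\PP$ with $\C^T\PP+\PP\,\C\ge 2\alpha\,\PP$, $\alpha=\mu$ in the non-defective case and $\alpha=\mu-\epsilon$ with a condition number of $\PP_\epsilon$ blowing up as $\epsilon\downarrow 0$ in the defective case, the differentiation of $I_\psi^{\PP}$ along the flow with the admissibility condition \eqref{eq:psicondition} making the remainder sign-definite, and a short-time regularisation so that only $e_\psi(f_0|f_\infty)<\infty$ is needed. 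This is the same architecture as the proof in \cite{AE}.

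There is, however, one step that fails as written: the comparison $I_\psi\ge\kappa\,I_\psi^{\PP}$ (with $I_\psi=I_\psi^{\D}$, the actual entropy dissipation) cannot hold when $r<d$, because $\D$ has a nontrivial kernel while $\PP$ is positive definite; as the paper itself notes in \S\ref{sec:fisher}, there are $f\ne f_\infty$ with $I_\psi^{\D}(f|f_\infty)=0$, and for those $I_\psi^{\PP}(f|f_\infty)>0$. The qualifier ``on the range of $\nabla h$'' does not repair this --- the failure of exactly this inequality is the reason the auxiliary matrix is needed in the first place. The transfer from the decay of $I_\psi^{\PP}$ to the decay of $e_\psi$ must instead use either the trivial reverse bound $I_\psi^{\D}\le c\,I_\psi^{\PP}$ (since $\D\le c\,\PP$) inserted into $e_\psi(f(t)|f_\infty)=\int_t^\infty I_\psi^{\D}(f(s)|f_\infty)\,ds$, or, more cleanly, the convex Sobolev inequality $I_\psi^{\PP}(f|f_\infty)\ge\lambda\,e_\psi(f|f_\infty)$ with $\lambda>0$ determined by $\lambda_{\min}(\PP)$, which holds because $\PP$ is positive definite and $f_\infty$ is Gaussian; combined with $I_\psi^{\PP}(f(t)|f_\infty)\le e^{-2\alpha(t-t_0)}I_\psi^{\PP}(f(t_0)|f_\infty)$, the regularisation bound $I_\psi^{\PP}(f(t_0)|f_\infty)\le C(t_0)\,e_\psi(f_0|f_\infty)$, and monotonicity of $e_\psi$ on $[0,t_0]$, this yields the stated estimates. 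A smaller misattribution: Condition (C) is not what makes $\PP$ ``compatible with $\D$'' --- the Bakry--\'Emery remainder is non-positive for arbitrary positive semidefinite $\D$ and $\PP$ once \eqref{eq:psicondition} holds, and the construction of $\PP_\epsilon$ uses only Condition (B); Condition (C) is what provides the hypoelliptic regularisation (and the existence of $f_\infty$), which you do invoke correctly.
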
 
The loss of the exponential rate $e^{-2\mu t}$ in part $(ii)$ of the above theorem is to be expected, however it seems that replacing it by $e^{-2\pa{\mu-\epsilon}t}$ is too crude. Indeed, if one considers the much related, finite dimensional, ODE equivalent
\begin{equation}\nonumber
\dot{x}=-\B x
\end{equation}
where the matrix $\B\in\R^{d\times d}$ is positively stable and has, for example, a defect of order $1$ in an eigenvalue with real part equal to $\mu>0$ (defined as in \eqref{eq:def_of_mu}), then one notices immediately that
\begin{equation}\nonumber
{\norm{x(t)}}^2 \leq c\|x_0\|^2\pa{1+t^2}e^{-2\mu t},\quad t\geq 0,
\end{equation} 
i.e.\ the rate of decay is worsened by a multiplication of a polynomial of the order twice the defect of the ``minimal eigenvalue''.\\
The goal of this work is to show that the above is also the case for our Fokker-Planck equation.\\ 
We will mostly focus our attention on the natural family of relative entropies $e_p\pa{\cdot|f_\infty}$, with $1<p\leq 2$, which are generated by
\begin{equation*}%\label{eq:pentropy}
\psi_p(y):=\frac{y^{p}-p(y-1)-1}{p(p-1)}.
\end{equation*}
Notice that $\psi_1$ can be understood to be the limit of the above family as $p$ goes to $1$.\\ 
An important observation about the above family, that we will use later, is the fact that \emph{the generating function for $p=2$, associated to the entropy $e_2$, is actually defined on $\R$ and not only $\R^{+}$}. This is not surprising as we saw the connection between $e_2$ and the $L^2$ norm. This means that we are allowed to use $e_2$ even when we deal with functions without a definite sign.\\
Our main theorem for this paper is the following:
\begin{theorem}\label{thm:main}
Consider the Fokker-Planck equation \eqref{eq:fokkerplanck} with diffusion and drift matrices $\D$ and $\C$ which satisfy Conditions (A)-(C). Let $\mu$ be defined as in \eqref{eq:def_of_mu} and assume that one, or more, of the eigenvalues of $\C$ with real part $\mu$ are defective. Denote by $n>0$ the maximal defect of these eigenvalues. Then, for any $1<p\leq 2$, the solution $f(t)$ to \eqref{eq:fokkerplanck} with unit mass initial datum $f_0\in L^1_+\pa{\R^d}$ and finite $p-$entropy, i.e. $e_p\pa{f_0|f_\infty}<\infty$, satisfies
\begin{equation*}%\label{eq:main}
e_p\pa{f(t)|f_\infty} \leq  \begin{cases}
c_2 e_2\pa{f_0|f_\infty}\pa{1+t^{2n}}e^{-2\mu t}, & p=2, \\
c_p \pa{p(p-1)e_p(f_0|f_\infty)+1}^{\frac{2}{p}}\pa{1+t^{2n}}e^{-2\mu t}, & 1<p<2,
\end{cases}
\end{equation*}
for $t\ge0$, where $c_p>0$ is a fixed geometric constant, that doesn't depend on $f_0$, and $f_\infty$ is the unique equilibrium with unit mass.
\end{theorem}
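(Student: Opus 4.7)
The plan is to treat $p=2$ first using spectral-theoretic information about $L$ on $L^2(\R^d,f_\infty^{-1})$, and then to deduce $1<p<2$ by a quantitative non-symmetric hypercontractivity step that upgrades $p$-entropic control of the initial datum to $L^2$-control of the solution at a fixed positive time.

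\textbf{Step 1: the case $p=2$.} Since $\psi_2$ extends to all of $\R$, I would work with the signed function $g(t):=f(t)-f_\infty$, which solves $\partial_t g=Lg$, is mean-zero, and satisfies $e_2(f(t)|f_\infty)=\tfrac12\|g(t)\|_{L^2(\R^d,f_\infty^{-1})}^2$. The central task is the operator-norm estimate
\begin{equation*}
\|e^{tL}g_0\|_{L^2(\R^d,f_\infty^{-1})}\le c\,(1+t^n)\,e^{-\mu t}\,\|g_0\|_{L^2(\R^d,f_\infty^{-1})}
\end{equation*}
on the mean-zero subspace. Under (A)--(C), $L$ is a hypoelliptic Ornstein--Uhlenbeck-type operator, its spectrum in $L^2(\R^d,f_\infty^{-1})$ is the discrete set $\{-\sum_i k_i\lambda_i:k_i\in\N_0\}$ where the $\lambda_i$ are the eigenvalues of $\C$, and the mean-zero subspace decomposes orthogonally into generalized-Hermite subspaces $V_k$ ($k\ge 1$) on which $L$ acts as a finite-dimensional matrix conjugate to a direct sum of Jordan blocks with eigenvalues $-\sum_i k_i\lambda_i$, $\sum k_i=k$. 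The maximal Jordan block size with eigenvalue of real part $-\mu$ occurs at $k=1$ and equals $n+1$; for $k\ge 2$ the spectral real parts lie at most at $-2\mu$, so these components decay at least like $e^{-2\mu t}$ times a polynomial and are absorbed. The finite-dimensional estimate $\|e^{tJ}\|\le c(1+t^n)e^{-\mu t}$ for the $V_1$ block together with the faster decay on $\bigoplus_{k\ge2}V_k$ yields the required bound; squaring gives the $p=2$ conclusion. Alternatively, one may construct a modified inner product on $V_1$ under which $-L$ has the appropriate defective-accretivity structure, avoiding the explicit summation in $k$.

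\textbf{Step 2: the case $1<p<2$.} I would establish a quantitative non-symmetric hypercontractivity estimate of the form
\begin{equation*}
e_2(f(t_0)|f_\infty)\le C\bigl(p(p-1)e_p(f_0|f_\infty)+1\bigr)^{2/p},
\end{equation*}
for some fixed $t_0>0$ and $C=C(t_0,p,\D,\C)$. The power $2/p$ tracks the identity $p(p-1)e_p(f|f_\infty)+1=\int(f/f_\infty)^p f_\infty\,dx$ and the conversion of this $L^p(f_\infty)$-type quantity into an $L^2(f_\infty)$-type one. Applying Step 1 to $f(t_0)$ gives, for $t\ge t_0$, an $L^2$-bound of the desired form, which then dominates $e_p(f(t)|f_\infty)$ via a local comparison $\psi_p(y)\le C_1\psi_2(y)$ in a neighbourhood of $y=1$ combined with tightness of $f/f_\infty$ under the flow (or a Csisz\'ar--Kullback-type inequality). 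For $t\in[0,t_0]$, the monotonicity of $t\mapsto e_p(f(t)|f_\infty)$ along the flow together with the boundedness from below of $(1+t^{2n})e^{-2\mu t}$ on $[0,t_0]$ gives the short-time bound. Collecting constants yields the stated inequality.

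\textbf{Main obstacle.} The decisive ingredient is the quantitative non-symmetric hypercontractivity estimate underlying Step 2. Because $\D$ and $\C$ are not assumed to commute (and $\D$ may be degenerate), $L$ is neither self-adjoint in $L^2(\R^d,f_\infty^{-1})$ nor a gradient flow in any standard metric, so the classical Gross log-Sobolev / Bakry--\'Emery scheme does not apply directly. I would attack this by differentiating an $L^{q(t)}(\R^d,f_\infty)$-functional of $f(t)/f_\infty$ along the flow, with $q(t)$ interpolating from $p$ at $t=0$ to $2$ at $t=t_0$, and deriving the required differential inequality from an anisotropic Bakry--\'Emery-type computation in which the degenerate symmetric part (controlled by $\D$) is compensated by commutator corrections supplied by $\C$, mirroring how condition (C) yields hypoellipticity. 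A secondary concern is ensuring that the constant $c$ in Step 1 is controlled uniformly across the subspaces $V_k$; as noted above, the exponential reserve $e^{-(k-1)\mu t}$ available for $k\ge 2$ dominates any polynomial growth of the per-block Jordan constants.
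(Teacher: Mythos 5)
Your overall architecture matches the paper's (split $p=2$ off via spectral decomposition, then reach $1<p<2$ through a quantitative non-symmetric hypercontractivity step plus monotonicity of $e_p$ on the initial time interval), but both of your key steps leave the genuinely hard point unproven. In Step 1, the splitting $f_0=f_\infty+f_1+\tilde f_1$ with $f_1\in V_1$ and the finite-dimensional Jordan estimate on $V_1$ are fine; the gap is the infinite-dimensional remainder $\bigoplus_{k\ge2}V_k$. You assert that the exponential reserve $e^{-(k-1)\mu t}$ "dominates any polynomial growth of the per-block Jordan constants", but those constants are condition numbers of the similarity matrices bringing $L\vert_{V_k}$ to Jordan form, and nothing in your argument controls their growth in $k$ (they need not grow polynomially), nor does the location of the spectrum alone yield a decay rate for a non-normal semigroup in infinite dimensions: spectrum-determined growth can fail, and degenerate Ornstein--Uhlenbeck operators are precisely the class with severe pseudospectral growth along vertical lines. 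The paper closes exactly this hole by importing the resolvent estimate of Ottobre--Pavliotis--Pravda-Starov (Theorem \ref{thm:OPPSthm}), transferring it to $L$ restricted to $H_k$ (Proposition \ref{prop:OPPSthmimproved}), and invoking the Gearhart--Pr\"uss theorem (Proposition \ref{prop:exp_stability}) to get $\norm{e^{Lt}}_{B(H_2)}\le C e^{-(2\mu-\epsilon)t}$, the $\epsilon$-loss being harmless because of the extra margin $\mu$. Without such a uniform resolvent (or uniform-in-$k$ similarity) input, your $p=2$ argument is incomplete.

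In Step 2 you name the correct target estimate (it is the paper's Theorem \ref{thm:hyper}(ii)), and your end-game is the paper's as well -- note only that the comparison $e_p\le 2e_2$ holds globally (Lemma \ref{lem:control_of_psi_by_e2}), so no local comparison near $y=1$ or tightness argument is needed. The genuine gap is the proposed proof of hypercontractivity itself: differentiating an $L^{q(t)}(\R^d,f_\infty)$ norm along the flow produces only the Dirichlet form of the symmetric part, i.e.\ with the degenerate matrix $\D$, and the logarithmic Sobolev inequality needed to close the resulting differential inequality is false in the degenerate case -- as the paper points out in Remark \ref{rem:Wang}, hypocoercive degenerate equations cannot satisfy a log-Sobolev inequality even though they may be hypercontractive, and for non-symmetric generators Gross's equivalence breaks down. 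Making "commutator corrections supplied by $\C$" work is essentially the generalized $\Gamma$-calculus route of Monmarch\'e, a separate substantial undertaking, and your sketch gives no indication of how the degenerate directions are recovered. The paper instead proves hypercontractivity directly from the explicit Mehler/Green's-function representation of the solution (Lemma \ref{lem:exact_sol}), Minkowski's integral inequality, and the quantitative convergence $\W(t)\to\II$ (Lemmas \ref{lem:WconvergesKrate} and \ref{lem:Inverse}); this bypasses the log-Sobolev obstruction and is also what makes the waiting time $t_0$ explicit. As written, the decisive estimate of your Step 2 is unproven and the strategy you outline would, in its naive form, fail.
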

The main idea, and novelty, of this work is in combining elements from Spectral Theory and the study of our $p-$entropies. We will give a detailed study of the geometry of the operator $L$ in the $L^2\pa{\R^d,f_\infty^{-1}}$ space and deduce, from its spectral properties, the result for $e_2$. Since the other entropies, $e_p$ for $1<p<2$, lack the underlying geometry of the $L^2$ space that $e_2$ enjoys, we will require additional tools: We will show a quantitative result of \emph{hypercontractivity for non-symmetric Fokker-Planck operators} that will assure us that after a certain, \emph{explicit} time, any solution to our equation with finite $p-$entropy will belong to $L^2\pa{\R^d,f_\infty^{-1}}$. This, together with the dominance of $e_2$ over $e_p$ for functions in $L^2\pa{\R^d,f_\infty^{-1}}$ will allow us to ``push'' the spectral geometry of $L$ to solutions with initial datum that only has finite $p-$entropy.\\
\amit{We have recently become aware that the long time behaviour of Theorem \ref{thm:main} has been shown in a preprint by Monmarch\'e, \cite{MoH15Arx}. However, the method he uses to show this result is a generalised entropy method (more on which can be found in \S\ref{sec:fisher}), while we have taken a completely different approach to the matter.\\}
The structure of the work is as follows: In \S\ref{sec:fokkerplanck} we will recall known facts about the Fokker-Planck equation (degenerate or not).  \S\ref{sec:spectral} will see the spectral investigation of $L$ and the proof of Theorem \ref{thm:main} for $p=2$. In \S\ref{sec:hyper}  we will show our non-symmetric hypercontractivity result and conclude the proof of our Theorem \ref{thm:main}. Lastly, in \S\ref{sec:fisher} we will recall another important tool in the study of Fokker-Planck equations - the Fisher information - and show that Theorem \ref{thm:main} can also be formulated for it, due to the hypoelliptic regularisation of the equation. 
\section{The Fokker-Planck Equation}\label{sec:fokkerplanck}
This section is mainly based on recent work of Arnold and Erb (see \cite{AE}). We will provide here, mostly without proof, known facts about degenerate (and non-degenerate) Fokker-Planck equations of the form \eqref{eq:fokkerplanck}.
\begin{theorem}\label{thm:propertiesoffokkerplanck}
Consider the Fokker-Planck equation \eqref{eq:fokkerplanck}, with diffusion and drift matrices $\D$ and $\C$ that satisfy Conditions (A)-(C), and an initial datum $f_0\in L^1_+\pa{\R^d}$. Then
\begin{enumerate}[(i)]
\item There exists a unique classical solution $f\in C^\infty \pa{\R^+\times \R^d}$ to the equation. Moreover, if $f_0\not=0$ it is strictly positive for all $t>0$.
\item For the above solution $\int_{\R^d}f(t,x)dx=\int_{\R^d}f_0(x)dx$.
\item If in addition $f_0\in L^{p}\pa{\R^d}$ for some $1< p\leq \infty$, then $f\in C\pa{[0,\infty),L^p\pa{\R^d}}$. 
\end{enumerate}
\end{theorem}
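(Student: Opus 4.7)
The plan is to construct the solution explicitly via a Gaussian propagator and then read off the three assertions. Since the drift in \eqref{eq:fokkerplanck} is linear and the diffusion is constant, the equation is a (possibly degenerate) generalized Ornstein--Uhlenbeck/Kolmogorov equation whose fundamental solution is available in closed form. I would write
\begin{equation*}
f(t,x) = \int_{\R^d} k(t,x,y) f_0(y)\, dy,
\end{equation*}
where $k(t,\cdot,y)$ is the Gaussian density with mean $e^{-\C t}y$ and covariance
\begin{equation*}
\Sigma(t) := \int_0^t e^{-\C s}(2\D) e^{-\C^T s}\, ds,
\end{equation*}
obtained by solving the moment equations of the SDE with drift $-\C x$ and diffusion $\sqrt{2\D}$.

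The decisive step is to verify that $\Sigma(t)$ is strictly positive definite for every $t>0$, so that $k$ is a genuine smooth Gaussian kernel. For $v\in\R^d$,
\begin{equation*}
v^T \Sigma(t) v = \int_0^t \abs{\sqrt{2\D}\, e^{-\C^T s}v}^2 ds,
\end{equation*}
which vanishes iff $e^{-\C^T s}v \in \text{Ker}\pa{\D}$ for all $s\in[0,t]$. Expanding in powers of $s$, this is equivalent to $\operatorname{span}\br{(\C^T)^k v : k \geq 0}$ being a $\C^T$-invariant subspace of $\text{Ker}\pa{\D}$, which by Condition (C) forces $v=0$. Thus $\Sigma(t)>0$ for every $t>0$, and this is the precise point at which hypoellipticity enters.

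With $k$ in hand, the three statements follow from standard arguments. For (i), smoothness of $f$ on $\R^+\times\R^d$ comes either from differentiating under the integral (the derivatives of $k$ in $x$ are polynomial multiples of the Gaussian) or, equivalently, from H\"ormander's hypoellipticity theorem, since Condition (C) is the bracket condition for $\partial_t - L$; uniqueness is obtained by duality against solutions of the backward adjoint equation, and strict positivity when $f_0\not\equiv 0$ is immediate from $k>0$. Part (ii) is the identity $\int_{\R^d} k(t,x,y)\,dx = 1$ for every $y$, which holds because $k(t,\cdot,y)$ is a probability density; it is also obtainable directly by integrating the divergence-form operator against $1$ and justifying boundary terms via the Gaussian tails of $f(t,\cdot)$. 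For (iii), after a linear change of variables $f(t,\cdot)$ is the convolution of $f_0$ with a Gaussian of unit $L^1$ norm, so that Young's inequality yields $\norm{f(t,\cdot)}_{L^p} \leq \norm{f_0}_{L^p}$ for all $1\leq p\leq \infty$; continuity in $t>0$ follows from the continuity of $t\mapsto \Sigma(t)$, continuity at $t=0$ for $1<p<\infty$ from density of $C_c^\infty$ together with the uniform bound, and the case $p=\infty$ from a standard duality argument.

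The main technical obstacle is Step~2 above: translating the purely algebraic Condition (C) into the quantitative analytic fact that $\Sigma(t)$ is positive definite on every interval $(0,t]$, $t>0$. Once that bridge is established, the remainder is essentially formal manipulation of Gaussian integrals together with standard semigroup and interpolation arguments; the degenerate case $r<d$ is accommodated precisely because $\Sigma(t)>0$ even when $\D$ is only positive semidefinite.
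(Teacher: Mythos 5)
The paper itself offers no proof of Theorem \ref{thm:propertiesoffokkerplanck}: it is imported from \cite{AE}, and the only ingredient of your argument that reappears in the paper is the explicit propagator, stated later as Lemma \ref{lem:exact_sol} and attributed to \cite{Ho67} and \cite{RiFP89}. Your self-contained route is therefore consistent with what the paper relies on, and your decisive step is the right one: the analyticity of $s\mapsto \D^{1/2}e^{-\C^T s}v$ turns Condition (C) into strict positive definiteness of $\Sigma(t)=\W(t)$ for every $t>0$, which is exactly where hypoellipticity enters; parts (i) and (ii) then follow as you describe. One caveat on uniqueness: a duality argument against the backward adjoint equation gives uniqueness only within a class of solutions (e.g.\ $C\pa{[0,\infty),L^1\pa{\R^d}}$, or smooth solutions with a growth restriction), not among all classical solutions -- Tychonoff-type examples show some such restriction is unavoidable, and it should be stated.

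One concrete step is wrong as written: $f(t,\cdot)$ is \emph{not} the convolution of $f_0$ with a probability density, and the bound $\norm{f(t,\cdot)}_{L^p}\leq\norm{f_0}_{L^p}$ is false in general. To reach a convolution you must substitute $z=e^{-\C t}y$, which gives $f(t,\cdot)=g_t\ast h_t$ with $g_t$ the centred Gaussian of covariance $\W(t)$ and $h_t(z)=e^{t\tr(\C)}f_0\pa{e^{\C t}z}$; Young's inequality then yields
\begin{equation*}
\norm{f(t,\cdot)}_{L^p\pa{\R^d}}\;\leq\; e^{\frac{p-1}{p}\tr(\C)\,t}\,\norm{f_0}_{L^p\pa{\R^d}},
\end{equation*}
and the exponential factor cannot be dropped: since $\C$ is positively stable, the flow $\dot x=-\C x$ compresses mass, and already for $d=1$, $\partial_t f=\partial_x\pa{\partial_x f+xf}$ with a centred Gaussian datum of variance larger than $1$ has strictly increasing $L^p$ norms for $p>1$. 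This slip does not sink part (iii), because the corrected bound is locally uniform in $t$, which is all your density argument for continuity at $t=0$ requires when $1<p<\infty$; but note that at $p=\infty$ a duality argument only gives weak-$*$ continuity at $t=0$, and norm continuity there forces $f_0$ to agree a.e.\ with a continuous function, so that endpoint needs a more careful (or more carefully stated) treatment.
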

\begin{theorem}\label{thm:equilibrium}
Assume that the diffusion and drift matrices, $\D$ and $\C$, satisfy Conditions (A)-(C). Then, there exists a unique stationary state $f_\infty\in L^1\pa{\R^d}$ to \eqref{eq:fokkerplanck} satisfying $\int_{\R^d} f_\infty(x) dx = 1$. Moreover, $f_\infty$ is of the form:
\begin{equation}\label{eq:equilibrium}
f_\infty(x)= c_{\K} e^{-\frac{1}{2}x^T \K^{-1} x},
\end{equation}
where the covariance matrix $\K\in\R^{d\times d}$ is the unique, symmetric and positive definite solution to the continuous Lyapunov equation
\begin{equation}\nonumber
2\D=\C \K+\K \C^T,
\end{equation} 
and where $c_{\K}>0$ is the appropriate normalization constant. In addition, for any $f_0\in L^1_+\pa{\R^d}$ with unit mass, the solution to the Fokker-Planck equation \eqref{eq:fokkerplanck} with initial datum $f_0$ converges to $f_\infty$ in relative entropy (as referred to in Theorem \ref{thm:Anton_Erb_rate}).
\end{theorem}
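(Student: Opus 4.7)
The plan is to prove Theorem \ref{thm:equilibrium} in three stages: (i) construct the covariance matrix $\K$ and verify that it is symmetric and positive definite; (ii) verify by direct computation that the Gaussian with covariance $\K$ solves $Lf_\infty = 0$; (iii) deduce uniqueness of the unit-mass equilibrium and, together with Theorem \ref{thm:Anton_Erb_rate}, the convergence statement.

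For stage (i), I would define
$$\K := 2\int_0^\infty e^{-\C s}\,\D\, e^{-\C^T s}\,ds.$$
Convergence of the integral is guaranteed by Condition (B), since positive stability of $\C$ forces $\|e^{-\C s}\|$ to decay exponentially in $s$. Symmetry of $\K$ is immediate, and positive semi-definiteness follows from $v^T\K v = 2\int_0^\infty \|\D^{1/2} e^{-\C^T s}v\|^2\,ds$. Strict positive definiteness is where Condition (C) enters: if $v^T\K v=0$ then $\D^{1/2} e^{-\C^T s}v=0$ for all $s\ge0$, i.e. $e^{-\C^T s}v\in\mathrm{Ker}(\D)$ for every $s$. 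Differentiating repeatedly at $s=0$ shows $(\C^T)^k v\in\mathrm{Ker}(\D)$ for all $k\ge0$, so $\mathrm{span}\{v,\C^T v,(\C^T)^2 v,\dots\}$ is a $\C^T$-invariant subspace of $\mathrm{Ker}(\D)$, which by Condition (C) must be trivial, forcing $v=0$. Differentiating $s\mapsto e^{-\C s}\D e^{-\C^T s}$ and integrating over $[0,\infty)$ yields the Lyapunov identity $\C\K + \K\C^T = 2\D$. Uniqueness of $\K$ follows because the Lyapunov operator $X\mapsto \C X + X\C^T$ is a bijection on symmetric matrices whenever $\C$ is positively stable: its eigenvalues are the pairwise sums $\lambda_i+\lambda_j$ of eigenvalues of $\C$, all of which have positive real part.

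For stage (ii) I would insert $f_\infty(x) = c_\K e^{-\frac{1}{2}x^T\K^{-1}x}$ into $Lf=\diver(\D\nabla f+\C xf)$. Using $\nabla f_\infty = -\K^{-1}x f_\infty$ one computes $\D\nabla f_\infty + \C x f_\infty = \M x f_\infty$ with $\M:=\C-\D\K^{-1}$, and a direct expansion gives
$$Lf_\infty = \Bigl[\tr(\M) - \tfrac{1}{2}x^T\bigl(\M^T\K^{-1}+\K^{-1}\M\bigr)x\Bigr] f_\infty.$$
The Lyapunov equation is equivalent to $\K\M^T+\M\K=0$, and multiplying by $\K^{-1}$ on both sides yields $\M^T\K^{-1}+\K^{-1}\M=0$, which annihilates the quadratic term. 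Multiplying the Lyapunov equation on the right by $\K^{-1}$ and taking traces gives $2\tr(\D\K^{-1})=2\tr(\C)$, hence $\tr(\M)=0$ and the constant term vanishes as well. Thus $Lf_\infty=0$, and the normalising constant $c_\K$ is chosen so that $\int_{\R^d} f_\infty\,dx = 1$.

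Stage (iii) is the main obstacle. To prove uniqueness, suppose $g\in L^1_+(\R^d)$ is any stationary solution of unit mass. By hypoellipticity (Condition (C)) the equation $Lg=0$ forces $g$ to be smooth, and the strong positivity of the Fokker-Planck semigroup supplied by Theorem \ref{thm:propertiesoffokkerplanck} gives $g>0$. I would then apply the entropy-dissipation identity for the Boltzmann entropy $e_{\psi_1}(\cdot|f_\infty)$ developed in \cite{AE}: the dissipation along the flow is strictly negative unless $g=f_\infty$ almost everywhere, yet for the stationary trajectory $t\mapsto g$ it must vanish, forcing $g=f_\infty$. The final convergence claim is precisely Theorem \ref{thm:Anton_Erb_rate}, applied with $f_0=g$ or any initial datum of finite admissible entropy, and is cited directly from \cite{AE}.
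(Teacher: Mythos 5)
The paper itself states this theorem without proof (all of \S\ref{sec:fokkerplanck} is quoted from \cite{AE}), so your argument has to stand on its own. Stages (i) and (ii) do: the integral formula $\K=2\int_0^\infty e^{-\C s}\D e^{-\C^Ts}\,ds$, the use of Condition (C) to upgrade positive semi-definiteness to definiteness (the span of $\{(\C^T)^k v\}$ being a $\C^T$-invariant subspace of $\text{Ker}(\D)$), the derivation and unique solvability of the Lyapunov equation, and the pointwise verification $Lf_\infty=0$ via $\M=\C-\D\K^{-1}$, $\K\M^T+\M\K=0$ and $\tr(\M)=0$ are all correct.

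Stage (iii), however, contains a genuine gap. Your uniqueness step rests on the claim that the entropy dissipation ``is strictly negative unless $g=f_\infty$''. In the degenerate case $r=\rank(\D)<d$ --- the case this paper is actually about --- that claim is false: by Lemma \ref{lem:entropydissipation} the dissipation equals $-\int\psi''(h)\,\nabla h^T\C_s\nabla h\,f_\infty\,dx$ with $h=g/f_\infty$ and $\C_s=\D$ only positive \emph{semi}definite, so its vanishing merely forces $\nabla h(x)\in\text{Ker}(\D)$ a.e.; in the normal form \eqref{eq:diagD} any function of the coordinates $x_{r+1},\dots,x_d$ has this property without being constant. This is precisely the obstruction the paper emphasises in \S\ref{sec:fisher} (``there are some functions that are not identically $f_\infty$ yet yield a zero Fisher information''). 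To close the argument you must combine $\D\nabla h=0$ with the stationarity equation and hypoellipticity/Condition (C) to force $h$ to be constant, or bypass the entropy method entirely: the explicit representation formula of Lemma \ref{lem:exact_sol} shows that for every unit-mass $f_0\in L^1$ one has $f(t,x)\to f_\infty(x)$ pointwise (since $\W(t)\to\K$ and $e^{-\C t}\to 0$) and hence in $L^1$ by Scheff\'e's lemma, which gives uniqueness of the normalized stationary state at once, even without a sign assumption on $g$ (your positivity-plus-entropy route additionally needs $g\ge0$ and an a priori guarantee that $e_{\psi_1}(g|f_\infty)<\infty$, neither of which is granted for an arbitrary stationary element of $L^1$). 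The final convergence claim is, as you say, exactly the content of Theorem \ref{thm:Anton_Erb_rate} and may be cited.
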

\begin{remark}\label{rem:equilibrium}
In the case where $f_0\in L^1_+\pa{\R^d}$ is not of unit mass, it is immediate to deduce that the solution to the Fokker-Planck equation with initial datum $f_0$ converges to $\pa{\int_{\R^d}f_0(x)dx}f_\infty(x)$.
\end{remark}
\begin{corollary}\label{cor:simple_form_for_L}
The Fokker-Planck operator $L$ can be rewritten as 
\begin{equation}\label{eq:FPrecast-gen}
  Lf=\text{div}\pa{f_\infty(x)\C\K \nabla \pa{\frac{f(t,x)}{f_\infty(x)}}}
\end{equation}
(cf.\ Theorem 3.5 in \cite{AE}).
\end{corollary}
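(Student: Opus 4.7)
The plan is a direct calculation that unfolds the right-hand side of \eqref{eq:FPrecast-gen} and matches it against $Lf = \text{div}(\D\nabla f + \C x f)$. The key input is the explicit Gaussian form $f_\infty(x) = c_\K e^{-\frac{1}{2} x^T \K^{-1} x}$ from Theorem \ref{thm:equilibrium}, which gives $\nabla f_\infty = -f_\infty \K^{-1} x$ (using that $\K^{-1}$ is symmetric). Applying the quotient rule then yields
\begin{equation*}
f_\infty(x)\, \nabla\left(\frac{f}{f_\infty}\right) = \nabla f + f\, \K^{-1} x,
\end{equation*}
and left-multiplying by the constant matrix $\C\K$ collapses the second term to $\C x f$, since $\K\K^{-1} = \II$. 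Taking the divergence then gives
\begin{equation*}
\text{div}\left(f_\infty\, \C\K\, \nabla\left(\frac{f}{f_\infty}\right)\right) = \text{div}(\C\K\, \nabla f) + \text{div}(\C x f).
\end{equation*}

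It then remains to identify $\text{div}(\C\K\, \nabla f)$ with $\text{div}(\D\, \nabla f)$. Since $\C\K$ is constant, $\text{div}(\C\K\, \nabla f) = \sum_{i,j}(\C\K)_{ij}\, \partial_i \partial_j f$, and by the symmetry of mixed partial derivatives only the symmetric part of $\C\K$ contributes. Using that $\K$ is symmetric, this symmetric part equals $\frac{1}{2}(\C\K + \K\C^T)$, which is precisely $\D$ by the Lyapunov equation $2\D = \C\K + \K\C^T$ from Theorem \ref{thm:equilibrium}. The two expressions therefore coincide, finishing the proof. There is no substantive obstacle here: the identity is purely algebraic, and the whole argument is driven by the Lyapunov equation, which encodes exactly the compatibility between the drift, the diffusion, and the Gaussian steady state $f_\infty$.
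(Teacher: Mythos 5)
Your proof is correct, and since the paper itself does not prove this corollary (it simply cites Theorem 3.5 of \cite{AE}), your direct computation is exactly the expected verification: expanding $f_\infty\nabla(f/f_\infty)=\nabla f+f\K^{-1}x$ via the Gaussian form of $f_\infty$, collapsing $\C\K\K^{-1}x f=\C x f$, and identifying the symmetric part of $\C\K$ with $\D$ through the Lyapunov equation $2\D=\C\K+\K\C^T$ and the symmetry of mixed partials (legitimate here since solutions are $C^\infty$ by Theorem \ref{thm:propertiesoffokkerplanck}). No gaps.
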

A surprising, and useful, property of \eqref{eq:fokkerplanck} is that the diffusion and drift matrices associated to it can always be simplified by using a change of variables. The following can be found in \cite{AAS}:
\begin{theorem}\label{thm:simpliedDC}
Assume that the diffusion and drift matrices satisfy Conditions (A)-(C). Then, there exists a \textit{linear} change of variable that transforms \eqref{eq:fokkerplanck} to itself with new diffusion and drift matrices $\bf D$ and $\bf C$ such that 
\begin{equation}\label{eq:diagD}
\D=\operatorname{diag}\br{d_1,d_2,\dots,d_r,0,\dots,0}
\end{equation} 
with $d_j>0$, $j=1,\ldots,r$ and $\C_s:=\frac{\C+\C^T}{2}=\D$. In these new variables the equilibrium $f_\infty$ is just the standard Gaussian with $\K=\II$.
\end{theorem}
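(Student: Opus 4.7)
My plan is to produce the claimed normal form by composing two successive linear changes of variables: the first whitens the equilibrium Gaussian, and the second diagonalises the diffusion matrix while preserving the whitened equilibrium.

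The essential ingredient is a transformation rule for \eqref{eq:fokkerplanck}. If $x = Ty$ for an invertible $T \in \R^{d\times d}$ and we set $\tilde f(t,y) := |\det T|\, f(t, Ty)$ (so that mass is preserved), then a direct chain-rule calculation starting from the explicit form \eqref{eq:fokkerplanck} shows that $\tilde f$ again solves a Fokker-Planck equation of the same shape, now with new matrices
$$\tilde \D = T^{-1} \D T^{-T}, \qquad \tilde \C = T^{-1} \C T.$$
Verifying this identity is the one step that requires genuine bookkeeping; the remainder is linear algebra. In particular, Conditions (A)--(C) are preserved under any such conjugation, so Theorems \ref{thm:propertiesoffokkerplanck} and \ref{thm:equilibrium} continue to apply to the transformed system.

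For the first reduction I take $T = \K^{1/2}$, the unique symmetric positive-definite square root of the covariance matrix $\K$ provided by Theorem \ref{thm:equilibrium}. Under this choice the equilibrium $f_\infty(x) \propto e^{-x^T \K^{-1} x/2}$ pulls back to a constant multiple of $e^{-|y|^2/2}$, so the new covariance is $\tilde \K = \II$. Substituting $\tilde \K = \II$ into the Lyapunov equation for the transformed system, $2\tilde \D = \tilde \C \tilde \K + \tilde \K \tilde \C^T$, immediately yields $\tilde \D = (\tilde \C + \tilde \C^T)/2 = \tilde \C_s$. So already after this first step we have achieved $\tilde \K = \II$ and $\tilde \D = \tilde \C_s$, but $\tilde \D$ is not yet diagonal.

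For the second reduction, note that $\tilde \D$ is symmetric, positive semidefinite and of rank $r$, so the spectral theorem yields an orthogonal matrix $O$ with $O^T \tilde \D O = \operatorname{diag}(d_1,\ldots,d_r,0,\ldots,0)$ and $d_j > 0$ (reordered as needed). Applying the transformation rule with $T = O$, so that $T^{-1} = T^T$, produces new matrices $\D' = O^T \tilde \D O$ of the required diagonal form and $\C' = O^T \tilde \C O$. The orthogonal step preserves both $\tilde \K = \II$, since $O^T \II O = \II$, and the identity $\tilde \D = \tilde \C_s$, since symmetrisation commutes with orthogonal conjugation: $(O^T \tilde \C O)_s = O^T \tilde \C_s O = O^T \tilde \D O = \D'$. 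Composing the two linear changes of variables $y = \K^{-1/2} x$ and $z = O^T y$ therefore gives the conclusion, and I do not foresee any substantive obstacle beyond the initial transformation rule.
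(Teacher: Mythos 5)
Your argument is correct: the transformation rule $\tilde{\D}=T^{-1}\D T^{-T}$, $\tilde{\C}=T^{-1}\C T$ for $\tilde f(t,y)=|\det T|\,f(t,Ty)$ does hold (and Conditions (A)--(C) are indeed preserved, since $\tilde{\C}$ is similar to $\C$ and $\ker\tilde{\D}=T^T\ker\D$ with the invariance property transported accordingly), and your two-step normalisation --- whitening with $T=\K^{1/2}$, which turns the Lyapunov equation into $2\tilde{\D}=\tilde{\C}+\tilde{\C}^T$ because $\tilde{\K}=\K^{-1/2}\K\K^{-1/2}=\II$, followed by an orthogonal diagonalisation of $\tilde{\D}$, which preserves both $\K=\II$ and $\C_s=\D$ --- delivers exactly \eqref{eq:diagD}. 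The paper itself states this theorem without proof, citing \cite{AAS}, and your construction is essentially the standard normalisation carried out there, so no gap remains.
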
 
The above matrix normalisation has additional impact on the calculation of the adjoint operator: 
\begin{corollary}
Let $\C_s=\D$. Then:
\begin{enumerate}
\item[(i)] 
\begin{equation*}
  \big(L_{\D,\C}\big)^* = L_{\D,\C^T}\,,
\end{equation*}
where $L^*$ denotes the (formal) adjoint of $L$, considered w.r.t.\ $L^2\pa{\R^d,f_\infty^{-1}}$. The domain of $L$ will be discussed in \S\ref{sec:spectral}.
\item[(ii)]
The kernels of $L$ and $L^*$ are both spanned by $\exp(-\frac{|x|^2}{2})$. This is not true in general, i.e.\ for a Fokker-Planck operator $L$ without the matrix normalisation assumption.
%i.e.\ without the under the normalizing coordinate transformation of Theorem \ref{thm:simpliedDC}). 
\end{enumerate}
\end{corollary}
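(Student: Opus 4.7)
For part (i), the plan is to compute the formal adjoint directly from the divergence form supplied by Corollary~\ref{cor:simple_form_for_L}. Under the normalization of Theorem~\ref{thm:simpliedDC} we have $\K=\II$, hence
\begin{equation*}
Lf=\text{div}\pa{f_\infty\,\C\,\nabla(f/f_\infty)}.
\end{equation*}
For test functions $f,g\in C_c^\infty(\R^d)$, one integration by parts gives
\begin{equation*}
\langle Lf,g\rangle_{L^2(\R^d,f_\infty^{-1})}=-\int_{\R^d} f_\infty\,(\C\nabla(f/f_\infty))\cdot\nabla(g/f_\infty)\,dx.
\end{equation*}
Applying the elementary identity $(\C a)\cdot b=a\cdot(\C^T b)$ and integrating by parts once more pushes the derivatives back onto $f/f_\infty$, producing $\langle f,L_{\D,\C^T}g\rangle_{L^2(\R^d,f_\infty^{-1})}$. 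The Gaussian weight inside $f_\infty$ absorbs the contributions from infinity, so the computation identifies the formal adjoint and proves (i); the domain question is deferred to \S\ref{sec:spectral} as indicated in the statement.

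For part (ii), the inclusion $\exp(-|x|^2/2)\in\ker L$ is immediate from Theorem~\ref{thm:equilibrium} with $\K=\II$, and one-dimensionality of $\ker L$ is the result of \cite{AE} already quoted in \S\ref{sec:intro}. The same must be done for $L^*=L_{\D,\C^T}$, and the main obstacle is to verify that $\C^T$ itself satisfies Conditions (A)--(C). (A) only sees $\D$, and (B) follows because $\C$ and $\C^T$ share spectra. Condition (C) for $\C^T$ reads: no non-trivial $\C$-invariant subspace of $\ker\D$. The key observation exploits the normalization: since $\C_s=\D$, we may write $\C=\D+\A$ with $\A$ antisymmetric; for every $v\in\ker\D$ one then has $\C v=\A v$ and $\C^T v=-\A v$, so a subspace $V\subseteq\ker\D$ is $\C$-invariant iff it is $\A$-invariant iff it is $\C^T$-invariant. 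Therefore (C) for $\C$ is equivalent to (C) for $\C^T$, and all three conditions transfer.

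With (A)--(C) in hand for $L_{\D,\C^T}$, Theorem~\ref{thm:equilibrium} applied to $L^*$ yields a unique unit-mass equilibrium of the form $c_{\K'}\exp(-x^T(\K')^{-1}x/2)$, where $\K'$ is the unique symmetric positive definite solution of the Lyapunov equation $2\D=\C^T\K'+\K'\C$. Testing the ansatz $\K'=\II$ reduces this to $\C^T+\C=2\D$, which is precisely the normalization $\C_s=\D$; by uniqueness $\K'=\II=\K$, so the equilibrium of $L^*$ coincides up to normalization with $f_\infty$. Together with one-dimensionality of $\ker L^*$ (again from \cite{AE} applied to $L_{\D,\C^T}$), this establishes (ii). The final remark in the statement, that the identification can fail without normalization, is consistent with this argument: both the antisymmetric-part calculation proving (C) and the matching $\K'=\K$ at the Lyapunov step rely crucially on $\C_s=\D$.
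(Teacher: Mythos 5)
Your proposal is correct and takes essentially the same route as the paper: part (i) is the identical double integration by parts on the divergence form $Lf=\diver\pa{f_\infty\C\nabla(f/f_\infty)}$ with $\K=\II$, and part (ii) is exactly the paper's (very terse) argument ``follows from \eqref{eq:equilibrium} and $\K=\II$'' spelled out in detail — your verification that $(\D,\C^T)$ satisfies (A)--(C) via the antisymmetric part and that the Lyapunov equation for $\C^T$ again gives $\K'=\II$ is a correct filling-in of what the paper leaves implicit.
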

\begin{proof}
(i) Under the normalising coordinate transformation of Theorem \ref{thm:simpliedDC} we see from \eqref{eq:FPrecast-gen} that % for functions in $D(L)$
\begin{equation}
\begin{split}\label{L-star}
\int_{\R^d} f(x) L_{\D,\C}&g(x) f_\infty^{-1}(x)dx= -\int_{\R^d} f_\infty(x) \nabla \pa{\frac{f(x)}{f_\infty(x)}}^T\C \nabla \pa{\frac{g(x)}{f_\infty(x)}}dx \\
=& \int_{\R^d} \text{div}\pa{f_\infty(x) \C^ T\nabla\pa{\frac{f(x)}{f_\infty(x)}}} g(x)f_\infty^{-1}(x)dx.
\end{split}
\end{equation}
(ii) follows from \eqref{eq:equilibrium} and $\K=\II$.
\end{proof}

{}From this point onwards we will always assume that Conditions (A)-(C) hold, and that we are in the coordinate system where $\D$ is of form \eqref{eq:diagD} and equals $\C_s$.\\

%%%%%%%%%%%%%%%%%%%%%%%%%%%%%%%%%%%%%%%%%%%%%%%%%%%%%%%%%%%%%%%%%%%%%%%%%%%%%%%%%%%%%%%%%%%%%%%%%%%%%%%%%%%%%%%%%

\section{The Spectral Study of $L$}\label{sec:spectral}
The main goal of this section is to explore the spectral properties of the Fokker-Planck operator $L$ in $L^2\pa{\R^d,f_\infty^{-1}}$, and to see how one can use them to understand rates of convergence to equilibrium for $e_2$. The crucial idea we will implement here is that, since $L^2\pa{\R^d,f^{-1}_{\infty}}$ decomposes into orthogonal eigenspaces of $L$ with eigenvalues that get increasingly farther to the left of the imaginary axis, one can deduce \emph{improved convergence rates on ``higher eigenspaces''}. \\
The first step in achieving the above is to recall the following result from \cite{AE}, where we use the notation $\N_0:=\N\cup\{0\}$:
\begin{theorem}\label{thm:spectraldecomposition}
Denote by 
\begin{equation*}%\label{eq:defVm}
V_m:=\span\br{\partial_{x_1}^{\alpha_1}\dots \partial_{x_d}^{\alpha_d}f_\infty(x)\ \Big|\ \alpha_1,\dots,\alpha_d\in \N_0, \sum_{i=1}^d \alpha_i=m}.
\end{equation*}
Then, $\br{V_m}_{m\in \N_0}$ are mutually orthogonal in $L^2\pa{\R^d, f_\infty ^{-1}}$, 
\begin{equation}\nonumber
L^2\pa{\R^d,f_\infty^{-1}}= \bigoplus_{m\in\N_0}V_m,
\end{equation}
and $V_m$ are invariant under $L$ and its adjoint (and thus under the flow of \eqref{eq:fokkerplanck}).\\
Moreover, the spectrum of $L$ satisfies
\begin{equation}\nonumber
\begin{gathered}
\sigma\pa{L}=\bigcup_{m\in\N_0}\sigma\pa{L\vert_{V_m}},\\
\sigma\pa{L\vert_{V_m}}=\br{-\sum_{i=1}^d \alpha_i \lambda_i \ \Big|\ \alpha_1 ,\dots,\alpha_d\in\N_0, \sum_{i=1}^d \alpha_i=m},
\end{gathered}
\end{equation}
where $\br{\lambda_j}_{j=1,\dots,d}$ are the eigenvalues (with possible multiplicity) of the matrix $\C$. The eigenfunctions of $L$ (or eigenfunctions and generalized eigenfunctions in the case $\C$ is defective) form a basis to $L^2\pa{\R^d,f_\infty^{-1}}$.
\end{theorem}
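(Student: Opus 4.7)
The plan is to exploit the Hermite structure hidden in the normalized coordinates of Theorem \ref{thm:simpliedDC}: there $f_\infty(x) = (2\pi)^{-d/2}e^{-|x|^2/2}$, so
\begin{equation*}
\partial_{x_1}^{\alpha_1}\cdots \partial_{x_d}^{\alpha_d} f_\infty(x) = (-1)^{|\alpha|}H_\alpha(x) f_\infty(x),
\end{equation*}
where $H_\alpha$ denotes the tensor product of univariate probabilists' Hermite polynomials with multi-index $\alpha$. Hence $V_m$ is precisely the image under $g\mapsto g\cdot f_\infty$ of $\span\br{H_\alpha \,:\, |\alpha|=m}$. Orthogonality of $V_m$ and $V_{m'}$ for $m\neq m'$ is then the classical identity $\int_{\R^d} H_\alpha H_\beta f_\infty\,dx = \alpha!\,\delta_{\alpha\beta}$, since $\inner{u f_\infty}{v f_\infty}_{L^2(\R^d,f_\infty^{-1})} = \int_{\R^d} uv\, f_\infty\,dx$. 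The decomposition $L^2(\R^d,f_\infty^{-1}) = \bigoplus_m V_m$ then follows from the completeness of the Hermite functions in $L^2(\R^d)$ via the isometry $g\mapsto g\, f_\infty^{-1/2}$ between $L^2(\R^d,f_\infty^{-1})$ and $L^2(\R^d)$.

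Invariance and the spectrum I would both extract from one commutator identity. Writing $L = \sum_{i,j} D_{ij}\partial_{x_i x_j} + (\C x)\cdot\nabla + \tr(\C)$ and differentiating in $x_k$, a direct computation yields
\begin{equation*}
[\partial_{x_k}, L] = \sum_{i=1}^d C_{ik}\,\partial_{x_i},\qquad \text{equivalently}\qquad \nabla L = L\nabla + \C^T \nabla.
\end{equation*}
Combined with $Lf_\infty=0$, an $m$-fold iteration expresses $L(\partial^\alpha f_\infty)$ as a linear combination of $\br{\partial^\beta f_\infty}_{|\beta|=m}$, which is the claimed $L$-invariance of $V_m$. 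Since $L^* = L_{\D,\C^T}$ by the preceding corollary and shares the same kernel $\span\br{f_\infty}$ (as $\K=\II$), the identical argument with $\C\leftrightarrow\C^T$ gives invariance of $V_m$ under $L^*$. The very same iteration also identifies the matrix of $-L|_{V_m}$ in the basis $\br{\partial^\alpha f_\infty}_{|\alpha|=m}$ with the linear map induced by $\C^T$ on the $m$-th symmetric tensor power of $\R^d$. A standard linear-algebra fact, most transparently read off from a Jordan basis of $\C^T$, then shows that its eigenvalues are precisely $\br{\sum_{i=1}^d \alpha_i\lambda_i \,:\, |\alpha|=m}$, where $\br{\lambda_i}_{i=1}^d$ are the eigenvalues of $\C$ listed with algebraic multiplicity. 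Since each $V_m$ is finite-dimensional and invariant, assembling its (generalized) eigenvectors over $m\in\N_0$ gives the asserted spectrum and a complete (generalized) eigenbasis of $L^2(\R^d,f_\infty^{-1})$.

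The main difficulty I anticipate is the combinatorial bookkeeping on the symmetric tensor power: one must verify, in the possibly defective case, that the eigenvalues of the induced matrix are exactly the full list $\sum_i \alpha_i\lambda_i$ (and not a strictly smaller set), by tracking how the Jordan blocks of $\C$ propagate through the $m$-fold symmetric tensor construction. Everything else reduces to the Hermite calculus on the normalized Gaussian and to an elementary commutator iteration.
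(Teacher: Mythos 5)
The paper itself gives no proof of this theorem --- it is recalled verbatim from \cite{AE} --- so your argument has to stand on its own, and most of it does: identifying $\partial^\alpha f_\infty$ with Hermite polynomials times $f_\infty$ gives the mutual orthogonality and (via density of polynomials times a fixed Gaussian in $L^2(\R^d)$) the completeness of $\bigoplus_m V_m$; the commutator identity $[\partial_{x_k},L]=\sum_i C_{ik}\partial_{x_i}$, iterated from $Lf_\infty=0$, gives the invariance of each $V_m$ under $L$ (and, with $\C$ replaced by $\C^T$, under $L^*$), and exhibits $-L\vert_{V_m}$ as the map induced by $\C^T$ on the $m$-th symmetric power of $\R^d$, whose eigenvalues are $\br{\sum_i\alpha_i\lambda_i\,:\,|\alpha|=m}$. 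The combinatorial difficulty you single out is in fact harmless: triangularize $\C^T$ (Schur or Jordan); the induced map on the symmetric power is triangular in the corresponding monomial basis with diagonal entries $\sum_i\alpha_i\lambda_i$, so the full list of eigenvalues, defective or not, is immediate.

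The genuine gap is your last sentence. Assembling the generalized eigenvectors of the blocks only yields the easy inclusion $\bigcup_m\sigma\pa{L\vert_{V_m}}\subseteq\sigma(L)$ and the basis property; the substantive half of the claim is the reverse inclusion, i.e.\ that for $z$ outside this (closed, locally finite) set the operator $\overline{L}-zI$ has a \emph{bounded} inverse on all of $L^2\pa{\R^d,f_\infty^{-1}}$. For a non-normal unbounded operator, density of the span of generalized eigenvectors does not exclude additional approximate or residual spectrum. Inverting $(L-zI)\vert_{V_m}$ block by block defines an inverse only on the algebraic direct sum, and its boundedness requires $\sup_m\Norm{\pa{(L-zI)\vert_{V_m}}^{-1}}<\infty$; this is not controlled by the distance from $z$ to $\sigma\pa{L\vert_{V_m}}$ alone, since the nilpotent/non-normal part of $L\vert_{V_m}$ and the size of its Jordan blocks grow with $m$ when $\C$ is defective or non-normal. (One must also specify which closed realization of $L$ is meant; the paper later works with the closure of $L$ defined on $\operatorname{span}\br{V_m,\ m\in\N_0}$.) To close this you need either uniform-in-$m$ resolvent estimates across the blocks, or the Ornstein--Uhlenbeck resolvent/spectral analysis of Metafune--Pallara--Priola type (cf.\ also \cite{OPPS15}), which is how the cited literature actually establishes $\sigma(L)=\bigcup_m\sigma\pa{L\vert_{V_m}}$.
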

Let us note that this orthogonal decomposition is non-trivial since $L$ is in general non-symmetric. The above theorem quantifies our previous statement about ``higher eigenspaces'': the minimal distance between the eigenvalues of $L$ restricted to the ``higher'' $L$-invariant eigenspace $V_m$ and the imaginary axis is $m\mu$. Thus, the decay we expect to find for initial datum from $V_m$ is of order $e^{-2m\mu t}$ (in the quadratic entropy, e.g.). However, as the function we will use in our entropies are not necessarily contained in only finitely many $V_m$, we might need to pay a price in the rate of convergence.\\ 
This intuition is indeed true. Denoting by
\begin{equation}\label{def:Hk}
H_k := \bigoplus _{m\geq k} V_m
\end{equation}
for any $k\geq 0$, we have the following:
\begin{theorem}\label{thm:e2rateofdecay}
Let \amit{$ f_k\in H_{k}$ for some $k\geq 1$} and let $f(t)$ be the solution to \eqref{eq:fokkerplanck} with initial data \amit{$f_0=f_\infty +  f_k$}. Then for any $0<\epsilon<\mu$ there exists a geometric constant $c_{k,\epsilon}\ge1$ that depends only on $k$ and $\epsilon$ such that 
\amit{\begin{equation} \label{eq:e2rateofdecay}
 e_2\pa{f(t)|f_\infty} \leq c_{k,\epsilon}e_2(f_0|f_\infty) e^{-2(k\mu-\epsilon)t}\,,\quad t\ge0\,.
 \end{equation}}
\end{theorem}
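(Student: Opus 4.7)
The plan is to leverage the orthogonal spectral decomposition $L^2\pa{\R^d, f_\infty^{-1}} = \bigoplus_{m \in \N_0} V_m$ from Theorem \ref{thm:spectraldecomposition} together with the enhanced spectral gap available on $H_k$. Since $L f_\infty = 0$, linearity of the flow yields $f(t) - f_\infty = e^{tL} f_k$, so that
\begin{equation*}
e_2\pa{f(t)|f_\infty} = \tfrac{1}{2}\, \norm{e^{tL} f_k}^2_{L^2\pa{\R^d, f_\infty^{-1}}}.
\end{equation*}
Decomposing $f_k = \sum_{m \ge k} g_m$ with $g_m \in V_m$, and exploiting both the mutual orthogonality of the $V_m$ in $L^2\pa{\R^d, f_\infty^{-1}}$ and their individual $L$-invariance, I would reduce the theorem to proving suitable per-$m$ bounds, in view of the Pythagorean identity
\begin{equation*}
\norm{e^{tL} f_k}^2_{L^2\pa{\R^d, f_\infty^{-1}}} = \sum_{m \ge k} \norm{e^{tL|_{V_m}} g_m}^2_{L^2\pa{\R^d, f_\infty^{-1}}}.
\end{equation*}

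On each finite-dimensional subspace $V_m$, Theorem \ref{thm:spectraldecomposition} gives that the spectrum of $-L|_{V_m}$ is contained in $\br{z : \Re z \ge m\mu}$. I would then invoke a finite-dimensional Jordan-normal-form / Lyapunov-equation construction to produce, for any $\delta > 0$, a positive definite quadratic form $\norm{\cdot}_{P_m}$ on $V_m$ equivalent to $\norm{\cdot}_{L^2\pa{\R^d, f_\infty^{-1}}}|_{V_m}$ with some equivalence constant $\kappa_m(\delta)$, and satisfying the coercivity
\begin{equation*}
\Re \inner{-L h}{h}_{P_m} \ge \pa{m\mu - \delta}\, \norm{h}^2_{P_m}, \qquad h \in V_m.
\end{equation*}
A Gr\"onwall estimate in this modified norm, converted back to the ambient $L^2$ norm, would yield
\begin{equation*}
\norm{e^{tL|_{V_m}} h}^2_{L^2\pa{\R^d, f_\infty^{-1}}} \le \kappa_m(\delta)^2\, e^{-2\pa{m\mu - \delta}t}\, \norm{h}^2_{L^2\pa{\R^d, f_\infty^{-1}}}.
\end{equation*}

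The crux of the argument, and also the main obstacle, is to control $\kappa_m(\delta)$ sufficiently well in $m$ so that the summation over $m \ge k$ produces a single finite constant $c_{k,\epsilon}$. Setting $\delta = \epsilon/2$, the factorization
\begin{equation*}
e^{-2\pa{m\mu - \epsilon/2}t} = e^{-2\pa{k\mu - \epsilon}t}\, e^{-2(m-k)\mu t - \epsilon t}
\end{equation*}
shows that the additional decay $e^{-2(m-k)\mu t}$ available whenever $m > k$ can absorb at-most-polynomial growth of $\kappa_m(\epsilon/2)$ in $m$. This polynomial growth control is delivered by the explicit multi-index description of $\sigma(L|_{V_m})$ in Theorem \ref{thm:spectraldecomposition}: the Jordan block sizes of $-L|_{V_m}$ grow only polynomially in $m$ (bounded in terms of $d$ and the maximal defect of $\C$), and the Lyapunov construction translates this into polynomial growth of $\kappa_m(\epsilon/2)$. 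Accordingly, $\sup_{m \ge k,\, t \ge 0} \kappa_m(\epsilon/2)^2\, e^{-2(m-k)\mu t - \epsilon t}$ defines a finite constant $c_{k,\epsilon}$.

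Combining the per-$m$ estimate with the Pythagorean identity and using $\norm{f_k}^2_{L^2\pa{\R^d, f_\infty^{-1}}} = 2\, e_2(f_0|f_\infty)$, I would conclude
\begin{equation*}
\norm{e^{tL} f_k}^2_{L^2\pa{\R^d, f_\infty^{-1}}} \le c_{k,\epsilon}\, \norm{f_k}^2_{L^2\pa{\R^d, f_\infty^{-1}}}\, e^{-2\pa{k\mu - \epsilon}t},
\end{equation*}
which is exactly \eqref{eq:e2rateofdecay}.
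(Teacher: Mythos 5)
Your reduction via the Pythagorean identity is fine: the $V_m$ are mutually orthogonal, $L$-invariant, and $f(t)-f_\infty=e^{tL}f_k$, so the theorem would indeed follow from per-$m$ bounds \emph{provided the constants are uniform in $m$}. That uniformity is exactly where the argument breaks. Your absorption mechanism does not absorb anything: since $\sup_{t\ge 0}e^{-2(m-k)\mu t-\epsilon t}=1$ (attained at $t=0$), the quantity $\sup_{m\ge k,\,t\ge0}\kappa_m(\epsilon/2)^2\,e^{-2(m-k)\mu t-\epsilon t}$ is simply $\sup_{m\ge k}\kappa_m(\epsilon/2)^2$, which is finite only if the $\kappa_m$ are \emph{uniformly bounded} in $m$ -- polynomial growth is not tolerated. (One could try to repair this by using the contraction bound $\norm{e^{tL}g_m}\le\norm{g_m}$ up to the crossover time and the Jordan bound afterwards; a short computation shows this tolerates at most exponential growth of $\kappa_m$ in $m$, but that still has to be proved.)

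Moreover, the claimed polynomial control of $\kappa_m(\delta)$ is unsupported: Theorem \ref{thm:spectraldecomposition} locates $\sigma(L\vert_{V_m})$ but says nothing about the conditioning of the Jordan (generalized eigen-)basis of the non-normal matrices $L\vert_{V_m}$ with respect to the $L^2(\R^d,f_\infty^{-1})$ inner product, and it is precisely this conditioning that enters $\kappa_m$. In the defective case the Jordan blocks of $L\vert_{V_m}$ grow linearly in $m$ (e.g.\ for $d=2$ and $\C$ a single Jordan block with eigenvalue $\mu$, $L\vert_{V_m}$ is one block of size $m+1$ with eigenvalue $-m\mu$), and already the elementary trade of $(1+t^{m})e^{-m\mu t}$ for the pure exponential $e^{-(m\mu-\delta)t}$ costs a factor of order $(m/(\delta e))^{m}$, super-polynomial in $m$, before even accounting for the uncontrolled similarity transformation. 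Obtaining uniform-in-$m$ (or even quantified) control of these constants is the real analytic content of the theorem; the paper gets it by a completely different route, working directly on $H_k$ and applying the Gearhart--Pr\"uss theorem to $L+(k\mu-\epsilon)I$ there, with the uniform resolvent bound on the relevant half-plane supplied by the Ornstein--Uhlenbeck resolvent estimate of Theorem \ref{thm:OPPSthm} via Proposition \ref{prop:OPPSthmimproved}, plus a compactness argument for the remaining region. A mode-by-mode Jordan-form argument cannot reproduce this input, so as it stands your proof has a genuine gap.
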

\begin{remark}\label{rem:comparisonDecay}
The loss of an $\epsilon$ in the decay rate of \eqref{eq:e2rateofdecay} -- compared to the decay rate solely on $V_k$ -- can have two causes: 
\begin{enumerate}
\item For drift matrices $\C$ with a defective eigenvalue with real part $\mu$, the larger decay rate $2k\mu$ would not hold in general. This is illustrated in \eqref{eq:entropydecayAEdef}, which provides the best possible \textit{purely exponential} decay result, as proven in \cite{AE}. 
\item For \emph{non-defective matrices} $\C$, the improved decay rate $2k\mu$ actually holds, but our method of proof, that uses the Gearhart-Pr\"uss Theorem, cannot yield this result.
% In the following proof we shall sketch an improved proof strategy for $k=1$. It is based on splitting the dominant contribution in the lowest relevant eigenspace from the rest, i.e. faster decaying contributions in higher eigenspaces. The essence is here to use the orthogonality of the eigenspaces $V_m$. This makes our procedure feasible for the quadratic entropy, but not for general entropies.
The decay estimate \eqref{eq:e2rateofdecay} will be improved in Theorem \ref{thm:rate_of_decay_e2}: There, the $\epsilon$-reduction drops out in the non-defective case.
\end{enumerate}
\end{remark}
\begin{remark}\label{rem:e2decay_no_positivity}
As we insinuated in the introduction to our work, an important observation to make here is that the initial data, $f_0$, \emph{doesn't have to be non-negative} (and in many cases, is not). While this implies that $f(t)$ might also be non-negative, this poses no problems as $e_2$ is the squared (weighted) $L^2$ norm (up to a constant). Theorem \ref{thm:e2rateofdecay} \emph{would not work in general} for $e_p$ as the non-negativity of $f(t)$ is crucial there (in other words, $f_0$ would not be admissible).
\end{remark}
The main tool to prove Theorem \ref{thm:e2rateofdecay} is the Gearhart--Pr\"uss Theorem (see for instance Th.\ 1.11 Chap.\ V in \cite{EN00}). In order to be able to do that, we will need more information about the dissipativity of $L$ and its resolvents with respect to $H_k$.
\begin{lemma}\label{lem:dissipative}
Let $V_m$ be as defined in Theorem \ref{thm:spectraldecomposition}. Consider the operator $L$ with the domain $D(L)=\span\br{V_m,\, m\in\N_0}$. Then $L$ is dissipative, and as such closable. Moreover, its closure, $\overline{L}$, generates a contraction semigroup on $L^2\pa{\R^d,f_\infty^{-1}}$.
 \end{lemma}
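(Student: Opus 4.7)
My plan is to apply the Lumer--Phillips theorem. The first task is dissipativity: using the reformulation of $L$ from Corollary \ref{cor:simple_form_for_L} with $\K=\II$ in the normalised coordinates, namely $Lf=\diver(f_\infty \C\nabla(f/f_\infty))$, I would integrate by parts against $\bar f f_\infty^{-1}$ (legal because elements of $D(L)$ are Schwartz, being polynomial multiples of $f_\infty$) to get
\begin{equation*}
  \inner{Lf}{f}_{L^2(\R^d,f_\infty^{-1})} = -\int_{\R^d} f_\infty(x)\,\overline{\nabla(f/f_\infty)}^T\,\C\,\nabla(f/f_\infty)\,dx.
\end{equation*}
Taking real parts turns $\C$ into its symmetric part $\C_s=\D$, which is positive semidefinite by Condition (A), so $\Re\inner{Lf}{f}\le 0$. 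This proves dissipativity, and since a densely defined dissipative operator is automatically closable (and $D(L)$ is dense because by Theorem \ref{thm:spectraldecomposition} the spaces $V_m$ span all of $L^2(\R^d,f_\infty^{-1})$), we get closability of $L$ at once.

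For the generation statement I would invoke the Lumer--Phillips theorem in the form: a densely defined dissipative operator whose closure has $\range(\lambda-\overline L)$ dense for some $\lambda>0$ generates a contraction semigroup. Since $D(L)=\span\{V_m\,:\,m\in\N_0\}$ it suffices to show that for a single $\lambda>0$ and for every $m\in\N_0$, the restriction $\lambda-L|_{V_m}$ is surjective onto $V_m$. But $V_m$ is finite-dimensional and $L$-invariant (Theorem \ref{thm:spectraldecomposition}), so $L|_{V_m}$ is a linear endomorphism of a finite-dimensional space with spectrum
\begin{equation*}
  \sigma(L|_{V_m}) = \Bigl\{-\sum_{i=1}^d \alpha_i\lambda_i\,:\,\alpha_i\in\N_0,\,\sum\alpha_i=m\Bigr\},
\end{equation*}
where $\Re(\lambda_i)>0$ by Condition (B). Hence every element of $\sigma(L|_{V_m})$ has non-positive real part; any $\lambda>0$ is therefore a regular value of each $L|_{V_m}$, and $\lambda-L|_{V_m}$ is a bijection of $V_m$. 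It follows that $\range(\lambda-L)\supseteq\bigcup_m V_m$, whose span is dense, so a fortiori $\range(\lambda-\overline L)$ is dense.

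Assembling these pieces, Lumer--Phillips then gives that $\overline L$ generates a contraction semigroup on $L^2(\R^d,f_\infty^{-1})$. The only mildly delicate point I foresee is the justification of the integration by parts leading to the dissipativity identity and the interchange of real part with the quadratic form, but since $f\in D(L)$ is a finite linear combination of derivatives of the Gaussian $f_\infty$, all boundary terms at infinity decay super-polynomially and the manipulation is routine. Alternatively, one could bypass the range computation by verifying that the formal adjoint $L^*=L_{\D,\C^T}$, whose symmetric drift part is again $\D$, is dissipative on the same domain by the identical calculation, and then invoke the symmetric version of Lumer--Phillips.
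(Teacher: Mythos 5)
Your proposal is correct and follows essentially the same route as the paper: dissipativity via the divergence-form representation $Lf=\diver\pa{f_\infty\C\nabla(f/f_\infty)}$, integration by parts, and $\C_s=\D\ge0$, followed by Lumer--Phillips with the range condition checked on each finite-dimensional $L$-invariant subspace $V_m$. The only (harmless) difference is that you conclude invertibility of $\lambda I-L\vert_{V_m}$ from the explicit spectrum in Theorem \ref{thm:spectraldecomposition} together with Condition (B), whereas the paper deduces that the eigenvalues of $L\vert_{V_m}$ have non-positive real part directly from the dissipativity just established.
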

\begin{proof}
Given $f\in D(L)$, and denoting $g:=\frac{f}{f_\infty}$, we notice that \eqref{eq:FPrecast-gen} with $\K=\II$ implies that
$$\pa{Lf,f}_{L^2\pa{\R^d,f_\infty^{-1}}} = \int_{\R^d} \text{div}\pa{f_\infty (x)\C \nabla g(x)}g(x)dx = -\int_{\R^d} \nabla g(x)^T \C \nabla g(x) f_\infty(x)dx$$
$$=-\int_{\R^d} \nabla g(x)^T \D \nabla g(x) f_\infty(x)dx \leq 0,$$
where we have used the fact that $\C_s=\D$. Thus, $L$ is dissipative.\\
To show the second statement we use the Lumer-Phillips Theorem (see for instance Th. 3.15 Chap. II in \cite{EN00}). Since $L^2\pa{\R^d,f_\infty^{-1}}=\bigoplus_{m\in\N_0} V_m$ it will be enough to show that for $\lambda>0$ we have that $V_m\subset \text{Range}\pa{\lambda I-L}$ for any $m$. As $V_m \subset D(L)$, is finite dimensional, and is invariant under $L$ (Theorem \ref{thm:spectraldecomposition} again) we can consider the linear bounded operator $L\vert_{V_m}:V_m \rightarrow V_m$. Since we have shown that $L$ is dissipative, we can conclude that the eigenvalues of $L\vert_{V_m}$ have non-positive real parts, implying that $\pa{\lambda I- L} \vert_{V_m}$ is invertible. This in turn implies that
$$V_m=\text{Range}\pa{\pa{\lambda I-L}\vert_{V_m}}\subset \text{Range}\pa{\lambda I-L},$$
completing the proof.
\end{proof}
To study the resolvents of $L$ we will need to use some information about its ``dual'': the Ornstein-Uhlenbeck operator.\\
For a given symmetric positive semidefinite matrix ${\bf Q}=(q_{ij})$ and a real, negatively stable matrix $\B=(b_{ij})$ on $\R^d$ we consider the Ornstein-Uhlen\-beck operator
\begin{equation}\label{eq:OU_operator}
P_{{\bf{Q}},\B}:=\frac{1}{2}\sum_{i,j}q_{ij}\partial_{x_ix_j}^2+\sum_{i,j}b_{ij}x_j \partial_{x_i}=\frac{1}{2}\tr\pa{{\bf{Q}} \nabla_x^2}+\pa{\B x,\nabla_x},\quad x\in\R^d.
\end{equation}
Similarly to our conditions on the diffusion and drift matrices, we will only be interested in Ornstein-Uhlen\-beck operators that are \emph{hypoelliptic}. In the above setting, this corresponds to the condition
$$\rank \left[{\bf{Q}}^{\frac{1}{2}},\B{\bf{Q}}^{\frac{1}{2}},\dots,\B^{d-1} {\bf{Q}}^{\frac{1}{2}} \right]=d.$$
The hypoellipticity condition guarantees the existence of an invariant measure, $d\mu$, to the process. This measure has a density w.r.t.\ the Lebesgue measure, which is given by 
$$\frac{d\mu}{dx}(x)= c_\M e^{-\frac12 x^T\M^{-1}x}\,,\quad\mbox{with}\quad
	{\bf \M}:=\int_0^\infty e^{\B s}\Q e^{\B^T s}\,ds$$
where $c_\M>0$ is a normalization constant. It is well known that the above definition of ${\bf \M}$ is equivalent to finding the unique solution to the continuous Lyapunov equation 
\begin{equation}\label{Lyapunov}
  \Q=-\B\M-\M\B^T\,.
\end{equation}
(See for instance Theorem 2.2 in \cite{SnZaN70}, \S2.2 of \cite{HoJoT91}.)\\
Hypoelliptic Ornstein-Uhlen\-beck operators have been studied for many years, and more recently in \cite{OPPS15} the authors considered them under the additional possibility of degeneracy in their diffusion matrix ${\bf{Q}}$. In \cite{OPPS15}, the authors described the domain of the closed operator $P_{{\bf{Q}},\B}$, and have found the following resolvent estimation:
\begin{theorem}\label{thm:OPPSthm}
Consider the hypoelliptic Ornstein-Uhlen\-beck operator $P_{{\bf{Q}},\B}$, as in \eqref{eq:OU_operator}, and its invariant measure $d\mu(x)$. Then there exist some positive constants $c,C>0$ such that for any $z\in \Gamma_{\kappa}$, with 
\begin{equation}\label{eq:OPPSset}
\Gamma_{\kappa}:=\br{z\in\mathbb{C} \,\Bigg |\, \Re z \leq \textstyle\frac{1}{2}\pa{1-\tr(\B)}, \abs{\Re z-\pa{1-\textstyle\frac{1}{2}\tr(\B)}} \leq  c\abs{z-\pa{1-\textstyle\frac{1}{2}\tr(\B)}}^{\frac{1}{2\kappa+1}} }
\end{equation}
and where $\kappa$ is the smallest integer $0\leq \kappa\leq d-1$ such that
\begin{equation}\label{rank-cond}
\rank \left[{\bf{Q}}^{\frac{1}{2}},\B{\bf{Q}}^{\frac{1}{2}},\dots,\B^{\kappa} {\bf{Q}}^{\frac{1}{2}} \right]=d\ ,
\end{equation}
one has that
\begin{equation*} %\label{eq:OPPSresolvantestimation}
\norm{\pa{P_{{\bf{Q}},\B}-zI}^{-1}}_{B\pa{L^2\pa{\R^d,d\mu}}} \leq C\abs{z-\pa{1-\frac{1}{2}\tr(\B)}}^{-\frac{1}{2\kappa+1}}.
\end{equation*}
\end{theorem}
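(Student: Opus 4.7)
The plan is to reduce the resolvent estimate to a subelliptic-type estimate that reflects the hypoellipticity built into the rank condition \eqref{rank-cond}. The first step I would take is a unitary change of framework: write the invariant density as $\rho(x) = c_\M e^{-\frac12 x^T \M^{-1}x}$ and conjugate $P_{\Q,\B}$ by $\rho^{1/2}$, i.e.\ consider $\tilde P := \rho^{1/2} P_{\Q,\B}\, \rho^{-1/2}$ acting on $L^2(\R^d,dx)$. A direct calculation (using the Lyapunov identity \eqref{Lyapunov} to simplify the first-order terms) shows that $\tilde P$ has the shape of a Schrödinger operator with quadratic potential plus a purely anti-symmetric first-order perturbation, \emph{up to an additive constant} equal to $-\tfrac12 \tr(\B)$. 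The natural normalization $\tilde P + 1$ then makes $z_0 := 1 - \tfrac12 \tr(\B)$ appear as the center of the relevant spectral curve, which explains the distinguished role of $z_0$ in the exclusion set $\Gamma_\kappa$.

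The next step is to establish a subelliptic estimate on the conjugated operator. The rank condition \eqref{rank-cond} with minimal $\kappa$ is precisely a Hörmander bracket condition of step $\kappa$, so I would build from the family of vector fields $\{\Q^{1/2}\partial_x\}$ together with their iterated commutators with the drift up to order $\kappa$. Using a microlocal (or H\"ormander–Kohn) argument adapted to the anisotropic symbol class that treats $x$ and $\xi$ symmetrically (because of the quadratic potential), I expect a two-sided estimate of the form
\begin{equation*}
\bigl\|\langle (x,D)\rangle^{1/(2\kappa+1)} u\bigr\|_{L^2}^2 \;\le\; C\bigl(\|\tilde P u\|_{L^2}^2 + \|u\|_{L^2}^2\bigr),
\end{equation*}
for $u$ in a suitable Schwartz-type dense domain, with the exponent $1/(2\kappa+1)$ being exactly the hypoelliptic gain dictated by the step $\kappa$. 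This is the core PDE input.

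From the subelliptic estimate the resolvent bound follows by a standard rescaling/shifting argument. Applying the estimate to $u = (\tilde P - z)^{-1} f$ and exploiting the structure of $\tilde P$ near $z_0$, one sees that the "large-parameter" form of the estimate is
\begin{equation*}
|z - z_0|^{1/(2\kappa+1)}\|u\|_{L^2}\;\le\; C\bigl(\|(\tilde P - z)u\|_{L^2} + \text{lower-order terms}\bigr),
\end{equation*}
provided $z$ stays away from the numerical range of $\tilde P$. The cuspidal shape of $\Gamma_\kappa$ in \eqref{eq:OPPSset} reflects the trade-off between real and imaginary parts that the subelliptic gain permits: the tangential boundary is exactly at the exponent $1/(2\kappa+1)$. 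Undoing the unitary conjugation transfers the estimate back to $L^2(\R^d,d\mu)$ without loss.

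The main obstacle I anticipate is the careful derivation of the anisotropic subelliptic estimate with \emph{explicit} large-parameter dependence on $z$, together with tracking the geometry of $\Gamma_\kappa$. In the classical symmetric subelliptic theory one works with compact operators and Rellich-type arguments, whereas here $\tilde P$ is non-self-adjoint and the relevant pseudodifferential calculus must respect both the degeneracy of $\Q$ and the Hörmander geometry generated by $\B$. Getting the correct exponent $1/(2\kappa+1)$ (rather than a weaker one) requires a precise commutator analysis that exploits the minimality of $\kappa$, and this is where the bulk of the technical work of \cite{OPPS15} lies.
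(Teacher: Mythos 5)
First, a point of orientation: the paper does not prove this theorem at all. It is imported verbatim from the cited work \cite{OPPS15} (building on Pravda-Starov's subelliptic theory for quadratic operators), so there is no internal proof to compare against; the relevant comparison is with \cite{OPPS15}, and your outline is indeed morally aligned with the strategy there -- conjugate to a non-self-adjoint quadratic operator on $L^2(dx)$, exploit the Kalman-type rank condition \eqref{rank-cond} as a hypoellipticity condition of ``step'' $\kappa$, and extract a global subelliptic estimate whose gain $\frac{1}{2\kappa+1}$ drives the resolvent bound. Your computation of the additive constant $-\frac12\tr(\B)$ under conjugation by $\rho^{1/2}$ is also consistent (e.g.\ it reproduces $\Delta-\frac{|x|^2}{4}+\frac d2$ in the model case $\Q=2\II$, $\B=-\II$).

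However, as a proof the proposal has two genuine gaps. First, the entire analytic content of the theorem is the global, anisotropic subelliptic estimate with the \emph{sharp} exponent tied to the minimal $\kappa$, and in your write-up this is only asserted (``I expect''); producing it for a non-self-adjoint, degenerate quadratic operator is precisely the technical core of \cite{OPPS15} and of the quadratic-operator calculus it relies on, so nothing is actually established. Second, and more structurally, the passage from the subelliptic estimate to the bound on $\Gamma_\kappa$ via ``apply the estimate to $u=(\tilde P-z)^{-1}f$ \ldots provided $z$ stays away from the numerical range'' does not cover the regime the theorem is about. Since every $z\in\Gamma_\kappa$ satisfies $\Re z\le\frac12(1-\tr \B)$, the set $\Gamma_\kappa$ reaches arbitrarily far up the imaginary direction while its real part drifts only like $|z-z_0|^{\frac{1}{2\kappa+1}}$; these points lie inside (or at no uniform distance from) the numerical range of the conjugated operator, whose real part is only bounded above. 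So a shifting/dissipativity argument gives nothing there, and one needs a genuinely quantitative, $z$-dependent version of the hypoelliptic estimate (in \cite{OPPS15} this is done by precise symbolic localization keeping track of the spectral parameter) to obtain the uniform decay $|z-z_0|^{-\frac{1}{2\kappa+1}}$ on all of $\Gamma_\kappa$. Until those two steps are supplied, the argument is a plausible programme rather than a proof.
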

We illustrate the spectrum of $P_{\Q,\B}$ and the domain $\Gamma_{\kappa}$ in Figure \ref{fig:gamma}.
\begin{figure}[ht]
	\centering
  \includegraphics[scale=0.8]{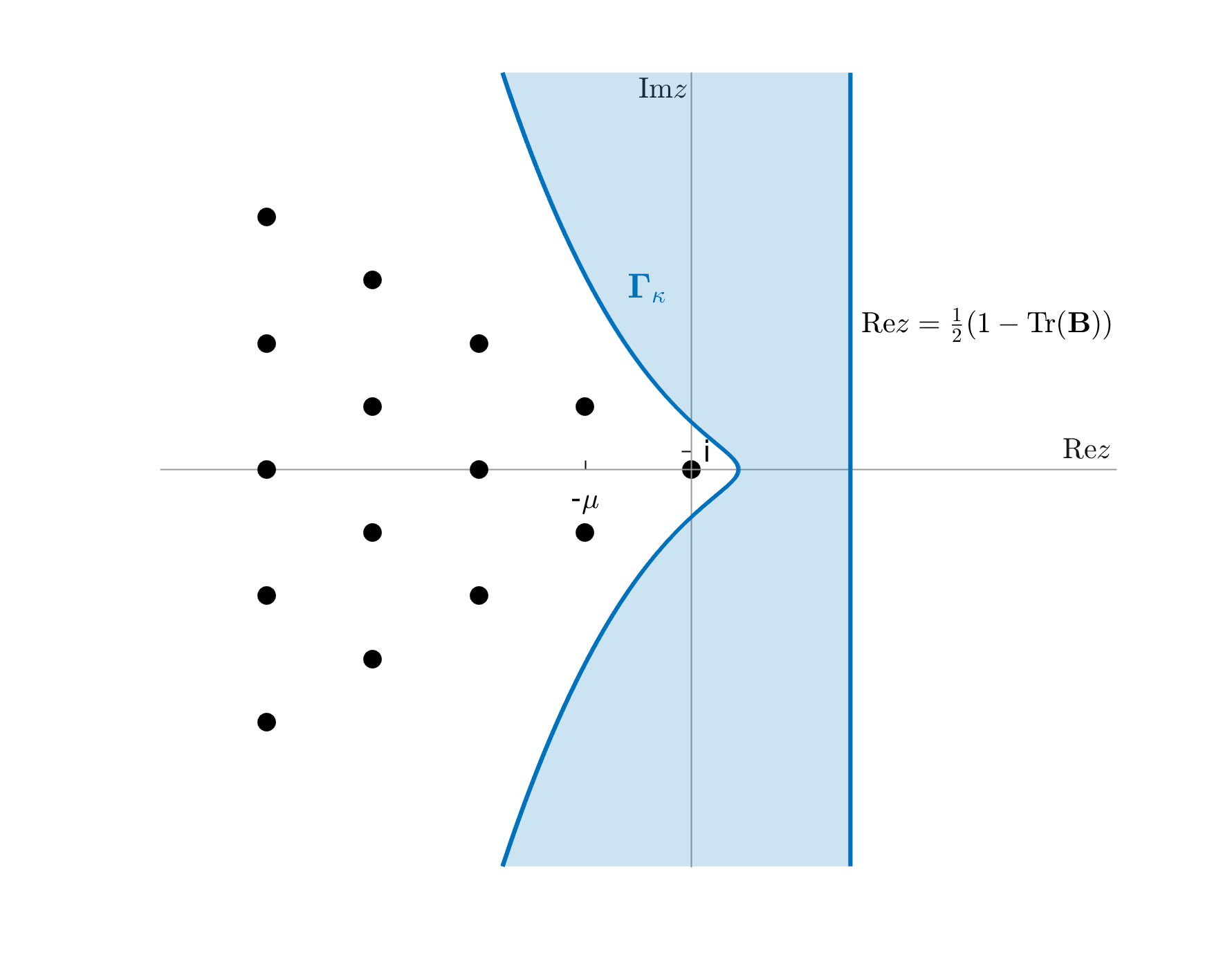}
	\caption{The black dots represent $\sigma(P_{\Q,\B})$ with the eigenvalues of the $2\times 2$ matrix $\B$ given as $\lambda_{1,2}=-1\pm \frac{7}{2}i$. The shaded area represents the set $\Gamma_{\kappa}$ of Theorem \ref{thm:OPPSthm} with $\kappa=1$.}
	\label{fig:gamma}
\end{figure}\\
In order to use the above theorem for our operator, $L$, we show the connection between it and $P$ in the following lemma:
\begin{lemma}\label{lem:equiv_L_and_P}
Assume that the associated diffusion and drift matrices for $L$, defined on $L^2\pa{\R^d,f_\infty^{-1}}$, and $P_{\Q,\B}$, defined on $L^2\pa{\R^d,d\mu(x)}$, satisfy
$$\Q=2\D,\,\B=-\C.$$ 
Then $d\mu(x)=f_\infty(x)dx$ is the invariant measure for $P=P_{\Q,\B}$ and its adjoint, and (up to the natural transformation $\frac{Lf}{f_\infty}=P^*(\frac{f}{f_\infty})$) we have $L=P^*$.
\end{lemma}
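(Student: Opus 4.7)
The plan proceeds in three steps: identify the invariant measure, verify hypoellipticity of $P$, and establish the operator identity.

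First, I would substitute $\Q=2\D$ and $\B=-\C$ into the Lyapunov equation \eqref{Lyapunov}, which becomes $2\D=\C\M+\M\C^{T}$. This is precisely the equation defining the covariance $\K$ of $f_\infty$ in Theorem \ref{thm:equilibrium}. By uniqueness of the positive definite solution and the fact that we work in the normalized coordinates where $\K=\II$, we obtain $\M=\II$, so the normalising constants $c_\M$ and $c_\K$ both reduce to $(2\pi)^{-d/2}$ and hence $d\mu(x)=f_\infty(x)\,dx$. Hypoellipticity of $P_{\Q,\B}$, i.e.\ the rank condition \eqref{rank-cond} for some $0\le\kappa\le d-1$, then follows from $\range \Q^{1/2}=\range \D$ and $(-\C)^{j}\D^{1/2}=(-1)^{j}\C^{j}\D^{1/2}$, which reduce it to the Kalman-type condition $\rank[\D^{1/2},\C\D^{1/2},\ldots,\C^{d-1}\D^{1/2}]=d$; by H\"ormander's theorem this is equivalent to Condition (C).

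For the operator identification I would use the divergence form \eqref{eq:FPrecast-gen} with $\K=\II$: setting $g:=f/f_\infty$, one has $Lf=\diver(f_\infty\C\nabla g)$. Expanding the divergence using $\nabla f_\infty/f_\infty=-x$ together with the identity $\tr(\C\nabla^{2}g)=\tr(\C_{s}\nabla^{2}g)=\tr(\D\nabla^{2}g)$ (symmetry of $\nabla^{2}g$ combined with $\C_{s}=\D$) yields
\begin{equation*}
\frac{Lf}{f_\infty}=\tr(\D\nabla^{2}g)-(\C^{T}x)\cdot\nabla g.
\end{equation*}
Applying exactly the same computation to the adjoint $L^{*}=L_{\D,\C^{T}}$ (from the corollary following Theorem \ref{thm:simpliedDC}) merely swaps $\C$ and $\C^{T}$, giving $L^{*}f/f_\infty=\tr(\D\nabla^{2}g)-(\C x)\cdot\nabla g=P_{2\D,-\C}\,g=Pg$. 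Dualising in $L^{2}(\R^{d},d\mu)$ then produces the claimed identity $Lf/f_\infty=P^{*}(f/f_\infty)$.

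Finally, invariance of $d\mu$ under $P$ (respectively $P^{*}$) is equivalent to $\int Pg\,d\mu=0$ (respectively $\int P^{*}g\,d\mu=0$) for every $g$ in the relevant domain, which by the definition of the $L^{2}(d\mu)$-adjoint is equivalent to $P^{*}\mathbf{1}=0$ (respectively $P\mathbf{1}=0$). Both statements are immediate since $P$ and $P^{*}$ are purely first- and second-order differential operators and hence annihilate constants. I do not anticipate any genuine analytic difficulty here; the only care needed throughout is the bookkeeping distinction between $\C$ and $\C^{T}$ across the various adjoint identifications, and the matching of normalizations in the Gaussian densities.
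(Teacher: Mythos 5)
Your proposal is correct and follows essentially the same route as the paper: identify $\M=\II$ (hence $d\mu=f_\infty\,dx$) from the Lyapunov equation in the normalized coordinates, and expand $\diver\pa{f_\infty\C\nabla g}/f_\infty$ using $\nabla f_\infty/f_\infty=-x$ and $\tr(\C\nabla^2 g)=\tr(\D\nabla^2 g)$ to obtain $P_{2\D,-\C^T}g=\big(P_{2\D,-\C}\big)^*g$. Obtaining the adjoint relation by dualising $(L_{\D,\C})^*=L_{\D,\C^T}$ rather than computing $\big(P_{2\D,-\C}\big)^*$ directly is an equivalent formal computation, and your extra checks of hypoellipticity and measure invariance are harmless additions to what the paper leaves implicit.
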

\begin{proof}
We start by recalling that we assume that $\D=\C_s$. Since \eqref{Lyapunov} can be rewritten as
$$2\D=\C\M+\M\C^T$$
for our choice of ${\Q} $ and $\B$, we conclude that $\M=\II$ for $P_{2\D,-\C}$ and that
 $\big(P_{2\D,-\C}\big)^*=P_{2\D,-\C^T}$ (the last equality can be shown in a similar way to \eqref{L-star}). Thus, the invariant measure corresponding to both these operators is $f_\infty(x) dx$.\\
Let $f\in D(L)\subset L^2\pa{\R^d,f_\infty^{-1}}$  and define $g_f:=\frac{f}{f_\infty} %$. We have that $g_f
\in L^2\pa{\R^d,f_\infty}$.
Then
\begin{equation}\label{eq:connectionPL}
\begin{gathered}
\frac{L_{\D,\C} f(x)}{f_\infty(x)}=\frac{\text{div}\pa{f_\infty(x)\C \nabla g_f(x)}}{f_\infty(x)} = \text{div}\pa{\C \nabla g_f(x)} + \frac{\nabla f_\infty(x)^T \C \nabla g_f(x)}{f_\infty(x)}\\
=\text{div}\pa{\D \nabla g_f(x)} - x^T \C \nabla g_f(x)=P_{2\D,-\C^T}g_f(x)=\big(P_{2\D,-\C}\big)^*g_f(x)\,,
\end{gathered}
\end{equation}
where the adjoint is considered w.r.t.\ $L^2\pa{\R^d,f_\infty}$. In particular, if $f(t,\cdot)\in L^2\pa{\R^d,f_\infty^{-1}}$ solves \eqref{eq:fokkerplanck} then $g_f(t,\cdot)$ satisfies the adjoint equation $\partial_t g_f =\big(P_{2\D,-\C}\big)^*g_f$. 
\end{proof}

With this at hand we can recast, and improve, Theorem \ref{thm:OPPSthm} for the operator $L$ and its closure.
\begin{prop}\label{prop:OPPSthmimproved}
Let any $k\in\N_0$ be fixed.
Consider the set $\Gamma_{\kappa}$, defined by \eqref{eq:OPPSset}, associated to $\Q=2\D,\,\B=-\C^T$ (Condition (C) guarantees the existence of such $\kappa$). Then we have that, for any $z\in \Gamma_{\kappa}$, the operator $\pa{L-zI}\vert_{H_k}:H_k\rightarrow H_k$ 
is well defined, closable, and its closure is invertible with
\begin{equation}\label{eq:OPPSimprovedresolvant}
\Norm{\pa{\pa{\overline{L}-zI}\vert_{H_k}}^{-1}}_{B\pa{H_k}} \leq C\abs{z-\pa{1+\frac{1}{2}\tr(\C)}}^{-\frac{1}{2\kappa+1}},
\end{equation} 
where $C>0$ is the same constant as in Theorem \ref{thm:OPPSthm}.
\end{prop}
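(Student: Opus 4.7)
The plan is to import the resolvent estimate of Theorem \ref{thm:OPPSthm} to our Fokker-Planck operator $L$ via the intertwining in Lemma \ref{lem:equiv_L_and_P}, and then exploit the $L$-invariance of the subspaces $V_m$ from Theorem \ref{thm:spectraldecomposition} to descend to $H_k$.

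First I would translate the statement to $L$. By Lemma \ref{lem:equiv_L_and_P} the unitary map $U\colon L^2(\R^d,f_\infty^{-1}) \to L^2(\R^d,f_\infty)$, $Uf := f/f_\infty$, conjugates $\overline{L}$ into the closure of $P_{2\D,-\C^T} = (P_{2\D,-\C})^*$. Applied with $\Q = 2\D$ and $\B = -\C^T$, Condition (C) supplies both the hypoellipticity of $P_{2\D,-\C^T}$ --- since $\ker\D = \ker\D^{1/2}$ has no nontrivial $\C^T$-invariant subspace, which is equivalent to the Kalman rank condition \eqref{rank-cond} --- and the existence of the corresponding integer $\kappa$. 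Noting that $1 - \tfrac12\tr(\B) = 1 + \tfrac12\tr(\C)$, Theorem \ref{thm:OPPSthm} then yields
\[
\Norm{(\overline{L}-zI)^{-1}}_{B(L^2(\R^d,f_\infty^{-1}))} \le C\,\abs{z - (1 + \tfrac12\tr(\C))}^{-\frac{1}{2\kappa+1}}, \qquad z \in \Gamma_\kappa.
\]

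Next I would reduce to $H_k$. By Theorem \ref{thm:spectraldecomposition} each $V_m$ is $L$-invariant, and hence so is $H_k = \bigoplus_{m \ge k} V_m$. Thus $(L-zI)\vert_{H_k}$ is well defined on $D(L) \cap H_k = \span\{V_m : m \ge k\}$, and closability on $H_k$ is inherited from the closability of $L$ on the ambient space (Lemma \ref{lem:dissipative}). Moreover the contraction semigroup $e^{t\overline{L}}$ leaves the closed subspace $H_k$ invariant and restricts to a strongly continuous contraction semigroup there, whose generator one identifies with the closure of $L\vert_{D(L)\cap H_k}$ via a standard approximation in the graph norm.

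To conclude, since $\Gamma_\kappa$ lies in the resolvent set of $\overline{L}$ by the first step and $H_k$ is $\overline{L}$-invariant, the full resolvent $(\overline{L} - zI)^{-1}$ preserves $H_k$, and its restriction coincides with the resolvent of the restricted generator $(\overline{L} - zI)\vert_{H_k}$. Since restricting a bounded operator to a closed invariant subspace cannot enlarge its norm, \eqref{eq:OPPSimprovedresolvant} then follows with the same constant $C$ as in Theorem \ref{thm:OPPSthm}. The main delicate point I anticipate is verifying that the generator of the restricted semigroup truly equals the closure of $L\vert_{H_k}$ (rather than some larger extension of $\overline{L}$ to $D(\overline{L})\cap H_k$), so that the final identification of resolvents is unambiguous; once this functional-analytic bookkeeping is settled, the estimate transfers immediately from the ambient space.
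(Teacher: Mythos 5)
Your overall strategy is the same as the paper's: transfer the resolvent bound of Theorem \ref{thm:OPPSthm} to $\overline{L}$ through the isometry $f\mapsto f/f_\infty$ of Lemma \ref{lem:equiv_L_and_P} (this is exactly the paper's $k=0$ case, including the identity $1-\tfrac12\tr(\B)=1+\tfrac12\tr(\C)$ and the equality of operator norms under the conjugation), and then descend to $H_k$ using the invariance structure of Theorem \ref{thm:spectraldecomposition}.

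The one step where your justification does not cover what is actually needed is the claim that $\pa{\overline{L}-zI}^{-1}$ preserves $H_k$ for every $z\in\Gamma_{\kappa}$. Invariance of $H_k$ under the contraction semigroup gives invariance under the resolvent only for $\Re z>0$ (via the Laplace-transform representation), and invariance of $H_k$ under the generator alone does not give it at all; but most of $\Gamma_{\kappa}$ lies in the closed left half-plane, which is precisely the region the proposition is about. The paper fills this by working with the finite-dimensional pieces: for $z\in\Gamma_{\kappa}$ (hence in the resolvent set of $\overline{L}$ by the $k=0$ step), each $\pa{\overline{L}-zI}\vert_{V_m}:V_m\to V_m$ is invertible, since $V_m$ is finite dimensional, $L$-invariant, and $\sigma\pa{L\vert_{V_m}}\subset\sigma(L)$; therefore $V_m\subset\range\pa{\pa{\overline{L}-zI}\vert_{H_k}}$ for all $m\ge k$. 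Combined with injectivity (inherited from bijectivity on the whole space) this produces an inverse on a dense subspace of $H_k$, which extends to all of $H_k$ with the same bound $C\abs{z-\pa{1+\tfrac12\tr(\C)}}^{-\frac{1}{2\kappa+1}}$; the same computation also delivers the resolvent invariance you asserted (alternatively one can argue by analytic continuation from the right half-plane, but some such argument must be made explicit). Finally, the ``delicate point'' you flag -- identifying the generator of the restricted semigroup with the closure of $L\vert_{D(L)\cap H_k}$ -- is not needed and is sidestepped in the paper: one checks directly that $\overline{\pa{L-zI}\vert_{H_k}}=\pa{\overline{L}-zI}\vert_{H_k}$ with domain $D(\overline{L})\cap H_k$, using dissipativity of $L$ on $H_k$ (Lemma \ref{lem:dissipative}) and the fact that the orthogonal complement of $H_k$ is a finite-dimensional subspace of $D(L)$.
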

\begin{proof}
We consider the case $k=0$ first. Due to Theorem \ref{thm:OPPSthm} we know that for any $z\in\Gamma_{\kappa}$, $P_{2\D,-\C^T}-zI$ is invertible on $L^2\pa{\R^d,f_\infty}$. Hence, for any $f\in L^2\pa{\R^d,f_\infty^{-1}}$ there exists a unique $\ell_f\in L^2\pa{\R^d,f_\infty}$ such that
$$\pa{P_{2\D,-\C^T}-zI}\ell_f(x)=\frac{f(x)}{f_\infty(x)},$$
which can also be written differently due to \eqref{eq:connectionPL}, as 
$$\pa{\overline{L}-zI} \pa{f_\infty(x) \ell_f(x)}=f(x).$$
This implies that $\overline{L}-zI$ is bijective on its appropriate space.\\ 
Next we notice that, with the notations from Lemma \ref{lem:equiv_L_and_P}
$$\sup_{\norm{f}=1}\norm{\pa{\overline{L}-zI}^{-1}f}_{L^2\pa{\R^d,f_\infty^{-1}}}=\sup_{\norm{f}=1}\norm{f_\infty \ell_f}_{L^2\pa{\R^d,f_\infty^{-1}}}$$
$$=\sup_{\norm{f}=1}\norm{\ell_f}_{L^2\pa{\R^d,f_\infty}}=\sup_{\norm{g_f}=1}\norm{\pa{P_{2\D,-\C^T}-zI}^{-1}g_f}_{L^2\pa{\R^d,f_\infty}}\,,$$
from which we conclude that
$$\norm{\pa{\overline{L}-zI}^{-1}}_{B\pa{L^2\pa{\R^d,f_\infty^{-1}}}}=\norm{\pa{P_{2\D,-\C^T}-zI}^{-1}}_{B\pa{L^2\pa{\R^d,f_\infty}}}\,,$$
completing the proof for this case.\\
We now turn our attention to the restrictions $\pa{L-zI}\vert_{H_k}$ with $k\ge1$ and domain 
$$D_k:=\span\br{V_m, m\geq k}=D(L)\cap H_k.$$ 
Since $L|_{V_m}:\,V_m\to V_m$ $\forall m\in\N_0$ we have that $\pa{L-zI}\vert_{H_k}:\,D_k\to H_k$. Moreover, the dissipativity of $L$ on $D(L)$ assures us that $L$ is dissipative, and as such closable, on the Hilbert space $H_k$. Thus $(L-zI)\vert_{H_k}$ is closable too and 
$$\overline{\pa{L-zI}\vert_{H_k}}=\pa{\overline{L}-zI}\vert_{H_k}.$$
Additionally, since the only part of $L^2\pa{\R^d,f_\infty^{-1}}$ that is not in $H_k$ is a finite dimensional subspace of $D(L)$, we can conclude that
$$D\left((\overline{L}-zI)\vert_{H_k}\right)=D(\overline{L})\cap H_k.$$
Given $z$ in the resolvent set of $\overline{L}$ we know that $\overline{L}-zI\vert_{V_m}:V_m\rightarrow V_m$ is invertible for any $m$ and as such
$$(\overline{L}-zI)\vert_{V_m}\pa{V_m}=V_m.$$
Thus,
$$V_m\subset\range\pa{(\overline{L}-zI)\vert_{H_k}},\qquad \forall m \ge k.$$ 
We conclude that $(\overline{L}-zI) \vert_{H_k}$ is injective with a dense range in $H_k$ for any $z\in\Gamma_{\kappa}$, and hence invertible on its range. The validity of  \eqref{eq:OPPSimprovedresolvant} for $k=0$ allows us to extend our inverse to $H_k$ with the same \emph{uniform bound} as is given in \eqref{eq:OPPSimprovedresolvant}. The general case is now proved.
\begin{comment}
The fact that $L$, with domain $D_k$, is well defined on $H_k$ is immediate if one considers the invariance under $L$ of all $V_m$, and the fact that $\oplus_{m=0}^{k-1} V_m$ is finite dimensional. Moreover, $L$ is dissipative. As the semigroup its closure generates over $H_k$ must coincides with the one generated by the closure of $L$ over all of $H$, we can view the above as $L\vert_{H_k}$.\\
Consider $z\in \Gamma_{\kappa}$. Since for any $0\leq j <l$ the subspace $\oplus_{m=j}^{l}V_m$ is finite dimensional, contained in $D(L)$ and invariant under $L$, we have that $\pa{L-zI}\vert_{\oplus_{m=j}^{l}V_m}$ is invertible. Moreover, due to the orthogonality of $\br{V_m}_{m\in\N_0}$
$$\norm{\pa{L-zI}\vert_{\oplus _{m=j}^l V_m}^{-1}}_{B\pa{\oplus _{m=k}^n V_m}}\leq \norm{\pa{L-zI}^{-1}}_{B\pa{L^2\pa{\R^d,f_\infty^{-1}}}}$$
The above states that $L-zI$ has a bounded inverse on $D_k$, which is dense in $H_k$. Thus, $\pa{\pa{L-zI}\vert_{H_k}}^{-1}$ is defined and satisfies \eqref{eq:OPPSimprovedresolvant}.
\end{comment}
\end{proof}
{}From this point onward, we will assume that we are dealing with the closed operator $\overline{L}$ and with its appropriate domain (that includes $\bigcup_{m\in\N_0}V_m$) when we consider our equation. We will also write $L$ instead of $\overline{L}$ in what is to follow.\\
Lemma \ref{lem:dissipative} and Proposition \ref{prop:OPPSthmimproved} are all the tools we need to estimate the uniform exponential stability of our evolution semigroup on each $H_k$, an estimation that is crucial to show Theorem \ref{thm:e2rateofdecay}.
\begin{prop}\label{prop:exp_stability}
Consider the Fokker-Planck operator $L$, defined on $L^{2}\pa{\R^d,f_\infty^{-1}}$, and the spaces $\br{H_k}_{k\geq 1}$ defined in \eqref{def:Hk}. Then, for any $0<\epsilon<\mu$, the semigroup generated by the operator $L+\pa{k\mu-\epsilon}I\vert_{H_k}$, with domain $D(L)\cap H_k$, is uniformly exponentially stable. I.e., there exists some geometric constant $C_{k,\epsilon}>0$ such that 
\begin{equation}\label{eq:GPresult}
\norm{e^{Lt}}_{B\pa{H_k}} \leq C_{k,\epsilon}e^{-\pa{k\mu-\epsilon}t}\,,\quad t\ge0\,,
\end{equation}
\end{prop}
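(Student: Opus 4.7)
The plan is to apply the Hilbert-space Gearhart--Pr\"uss theorem on $H_k$ to the shifted generator $\overline{L}\vert_{H_k}+(k\mu-\epsilon)I$. Lemma \ref{lem:dissipative}, combined with the $L$-invariance of each $V_m$ from Theorem \ref{thm:spectraldecomposition}, implies that $\overline{L}\vert_{H_k}$ generates a $C_0$-contraction semigroup on $H_k$. By Gearhart--Pr\"uss, uniform exponential stability of the shifted semigroup (which is exactly \eqref{eq:GPresult}) is equivalent to the two conditions $\sigma\pa{\overline{L}\vert_{H_k}}\subset\{z:\Re z<-(k\mu-\epsilon)\}$ and a uniform operator-norm bound on $(\overline{L}-zI)^{-1}$ over the closed half-plane $\{z:\Re z\ge-(k\mu-\epsilon)\}$.

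The spectral inclusion follows immediately from Theorem \ref{thm:spectraldecomposition}: every eigenvalue of $L\vert_{V_m}$ has the form $-\sum_{i=1}^d \alpha_i \lambda_i$ with $\sum_i \alpha_i = m$, and using $\Re\lambda_i\ge\mu$,
\begin{equation*}
\sigma\pa{\overline{L}\vert_{H_k}} = \bigcup_{m\ge k} \sigma\pa{L\vert_{V_m}} \subset \{z:\Re z\le -k\mu\} \subset \{z:\Re z<-(k\mu-\epsilon)\}\,.
\end{equation*}

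For the uniform resolvent bound I would split the closed half-plane into three pieces. Writing $a:=1+\tfrac{1}{2}\tr(\C)$ for the center of $\Gamma_\kappa$ (positive by Condition (B)) and $b:=\tfrac{1}{2}(1+\tr(\C))$ for its right-most abscissa, on the far-right $\{\Re z\ge b\}$ contractivity of $e^{\overline{L}t}$ immediately gives $\|(\overline{L}-zI)^{-1}\|_{B(H_k)}\le 1/b$; on the vertical strip $\{-(k\mu-\epsilon)\le\Re z\le b,\,|\Im z|\ge R\}$ with $R$ sufficiently large, Proposition \ref{prop:OPPSthmimproved} applied with $\Q=2\D$ and $\B=-\C^T$ yields $\|(\overline{L}-zI)^{-1}\|_{B(H_k)}\le C|z-a|^{-1/(2\kappa+1)}\le CR^{-1/(2\kappa+1)}$; and on the remaining compact rectangle $\{-(k\mu-\epsilon)\le\Re z\le b,\,|\Im z|\le R\}$ continuity of the resolvent on the (open) resolvent set provides a finite supremum. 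Combining these three estimates with Gearhart--Pr\"uss yields \eqref{eq:GPresult}. The main obstacle lies in the second piece: one must verify that this strip slice actually lies inside $\Gamma_\kappa$ once $|\Im z|\ge R$. Since $\Gamma_\kappa$ is bordered by the curve $|\Re z-a|=c|z-a|^{1/(2\kappa+1)}$, and since on the strip the left-hand side is uniformly bounded while $|z-a|^{1/(2\kappa+1)}$ grows like $|\Im z|^{1/(2\kappa+1)}\to\infty$, the inclusion holds for all $|\Im z|$ large enough (depending on $k$, $\epsilon$, $\kappa$, and the geometric constants). This geometric matching---between the resolvent region of \cite{OPPS15} and the target vertical strip dictated by the decay rate $k\mu-\epsilon$---is precisely where the hypoellipticity of $L$ enters the argument.
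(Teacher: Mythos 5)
Your proof is correct and follows essentially the same route as the paper: Gearhart--Pr\"uss on $H_k$ combined with a resolvent bound obtained by splitting the relevant half-plane into a far-right region (handled by the contraction property of $e^{Lt}$), the unbounded part covered by $\Gamma_\kappa$ via Proposition \ref{prop:OPPSthmimproved}, and a remaining compact set where analyticity of the resolvent suffices. The only cosmetic differences are that the paper gets the far-right bound by applying Gearhart--Pr\"uss to the exponentially stable semigroup generated by $L-\epsilon I$ instead of the Hille--Yosida estimate $\norm{(\overline{L}-zI)^{-1}}\le 1/\Re z$, and that it, like you, implicitly uses the spectral localization $\sigma\pa{\overline{L}\vert_{H_k}}\subset\br{z\,:\,\Re z\le -k\mu}$ to place the compact piece in the resolvent set.
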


\begin{proof}
We will show that
$$M_{k,\epsilon}:=\sup_{\Re z>0}\left\|\bigg(\pa{L+[k\mu-\epsilon]I}-zI\bigg)^{-1}\right\|_{B(H_k)} < \infty\,,$$
and conclude the result from the fact that $L$ generates a contraction semigroup according to Lemma \ref{lem:dissipative} and the Gearhart-Pr\"uss Theorem.\\
The study of upper bounds for the resolvents of $L+[k\mu-\epsilon]I$  in the right-hand complex plane relies on subdividing this domain into several pieces. This is illustrated in Figure \ref{fig:compact}, which we will refer to during the proof to help visualise this division.\\
%\ref{lem:dissipative}}}
Since $L$ generates a contraction semigroup, for any $\epsilon>0$, $L-\epsilon I$ generates a semigroup that is uniformly exponentially stable on $L^2(\R^d,f_\infty^{-1})$. 
The Gearhart-Pr\"{u}ss Theorem applied to $L-\epsilon I$ implies that
\[
\widetilde{M}_{k,\epsilon}:=\sup_{\Re z>0} \left\|\left(L-(\epsilon+z)I\right)^{-1}\right\|_{B(H_k)} \leq \sup_{ \Re z>0}\left\|\left(L-(\epsilon+z)I\right)^{-1} \right\|_{{B}(L^2(\R^d,f_\infty^{-1}))}<\infty,
\]
where we removed the subscript $H_k$ from the operator on the left-hand side to simplify notations.\\
Since
$$L-\pa{\epsilon+z}I=L+[k\mu-\epsilon]I-\pa{z+k\mu}I,$$ 
we see that 
$$\widetilde{M}_{k,\epsilon}=\sup_{\Re z_1>0} \left\| \bigg(\pa{L+[k\mu-\epsilon]I}-\pa{z_1+k\mu}I\bigg)^{-1}\right\|_{B(H_k)}$$
$$=\sup_{\Re z>k\mu} \left\| \bigg(\pa{L+[k\mu-\epsilon]I}-zI\bigg)^{-1}\right\|_{B(H_k)}$$
(this term corresponds to the right-hand side of the dashed line in Figure \ref{fig:compact}).\\
{}From the above we conclude that
$$M_{k,\epsilon}= \max\pa{\widetilde{M}_{k,\epsilon} \,,\sup_{0<\Re z\leq k\mu} \left\|\pa{L-[z-k\mu+\epsilon]I}^{-1}\right\|_{B(H_k)}}.$$
which implies that we only need to show that the second term in the parenthesis is finite (this term corresponds to the area between the dashed line and the imaginary axis in Figure \ref{fig:compact}). \\
\begin{figure}[ht]
	\centering
  \includegraphics[scale=0.8]{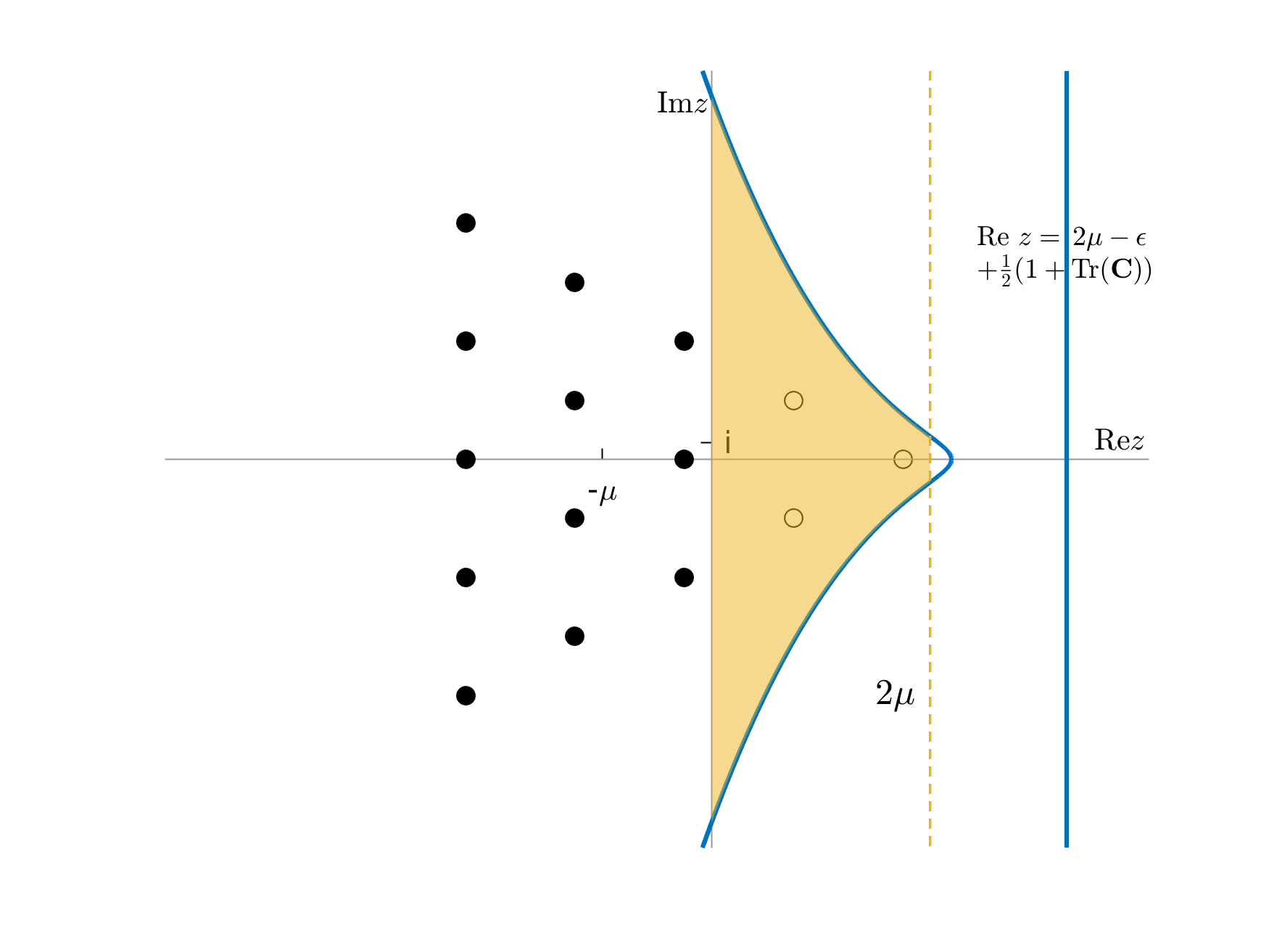}
	\caption{ choosing $k=2$, the solid dots represent $\sigma((L+[2\mu-\epsilon]I)|_{H_{2}})$ where the eigenvalues of the $2 \times 2$ matrix  $\C$ are given by $\lambda_{1,2}=1\pm \frac{7}{2}i$. The empty dots are the eigenvalues of the operator $L+[2\mu-\epsilon]I$ that disappear due to the restriction to $H_2$, and the shaded area represents the compact set $\{z\in\mathbb{C} \mid 0\leq \Re z \leq 2\mu\}\cap \{z\not\in \Gamma_{\kappa}+2\mu-\epsilon\}$ where $\kappa=1$.}
	\label{fig:compact}
\end{figure}
Using Proposition \ref{prop:OPPSthmimproved} we conclude that
\[
\sup_{z-k\mu+\epsilon\in\Gamma_{\kappa}} \left\| \left(L-\left[z-k\mu+\epsilon \right]I\right)^{-1}\right\|<\infty
\] 
(rep\-re\-sent\-ed in Figure \ref{fig:compact} by the domain between the two solid blue curves). We conclude that $M_{k,\epsilon}<\infty$ if and only if
$$\sup_{\br{0<\Re z\leq k\mu} \cap \br{z\not\in \Gamma_{\kappa}+k\mu-\epsilon }}\left\|\pa{L-[z-k\mu+\epsilon]I}^{-1}\right\|_{B(H_k)}<\infty.$$
Since $\Re z=-\epsilon$ is the closest vertical line to $\Re z=0$ which intersects\linebreak 
$\sigma\pa{\pa{L + [k\mu -\epsilon]I}\vert_{H_k}}$, we notice that $\br{0<\Re z\leq k\mu} \cap \br{z\not\in \Gamma_{\kappa}+k\mu-\epsilon} $ (repres\-en\-ted by the shaded area in Figure \ref{fig:compact}) is a compact set in the resolvent set of \linebreak $\pa{L+[k\mu-\epsilon]I}\vert_{H_k}$. As the resolvent map is analytic on the resolvent set, we conclude that $M_{k,\epsilon}<\infty$, completing the proof.
\end{proof}
\begin{remark}\label{rem:quantitative_Pruss_Gearhart}
While the constant mentioned in \eqref{eq:GPresult} is a fixed geometric one, the original Gearhart-Pr\"uss theorem doesn't give an estimation for it. However, recent studies have improved the original theorem and have managed to find explicit expression for this constant by paying a small price in the exponential power. As we can afford to ``lose'' another small $\epsilon$, we could use references such as \cite{HS,LV13} to have a more concrete expression for $C_{k,\epsilon}$. We will avoid giving such an expression in this work to simplify its presentation.
\end{remark}
We finally have all the tools to show Theorem \ref{thm:e2rateofdecay}:
\begin{proof}[Proof of Theorem \ref{thm:e2rateofdecay}]
Using the invariance of $V_0$ and $H_k$ under $L$ and Proposition \ref{prop:exp_stability} we find that for any \amit{$ f_k\in H_{k}$
\begin{eqnarray*}
&&e_2\pa{e^{Lt}\pa{ f_k+f_\infty}|f_\infty} =e_2\pa{e^{Lt}\pa{ f_k}+f_\infty|f_\infty}= \frac12\Norm{e^{Lt} f_k}^2_{H_{k}}\\
&&\leq \frac12 C^2_{k,\epsilon}e^{-2\pa{k\mu-\epsilon}t}\Norm{ f_k}^2_{H_{k}}=C^2_{k,\epsilon}e^{-2\pa{k\mu-\epsilon}t}e_2\pa{ f_k+f_\infty|f_\infty}\,,
\end{eqnarray*}}
showing the desired result.
\end{proof}
Theorem \ref{thm:e2rateofdecay} has given us the ability to control the rate of convergence to equilibrium of functions with initial data that, up to $f_\infty$, live on a ``higher eigen\-space''. Can we use this information to understand what happens to the solution of an arbitrary initial datum $f_0\in L^2\pa{\R^d,f_\infty^{-1}}$ with unit mass?\\
The answer to this question is \emph{Yes}.\\
Since for any $k\geq 1$
$$L^2\pa{\R^d,f_\infty^{-1}}=V_0\oplus\pa{\bigoplus_{m=1}^{k}V_m}\oplus H_{k+1}$$
and the Fokker-Planck semigroup is invariant under all the above spaces, we are motivated to \emph{split} the solution of our equation into a part in $V_0\oplus H_{k+1}$ and a part in $\bigoplus_{m=1}^{k}V_m$ - which is a \emph{finite dimensional subset of} $D(L)$. As we now know that decay in $\bigoplus_{m=1}^{k}V_m$ is slower than that for $H_{k+1}$ we will obtain a \emph{sharp} rate of convergence to equilibrium. We summarise the above intuition in the following theorem:
\begin{theorem}\label{thm:rate_of_decay_e2}
Consider the Fokker-Planck equation \eqref{eq:fokkerplanck} with diffusion and drift matrices satisfying Conditions (A)-(C). Let $f_0\in L^1_+\pa{\R^d}\cap L^2\pa{\R^d,f_\infty^{-1}}$ be a given function with unit mass \amit{such that
$$f_0=f_\infty+f_{k_0}+\tilde f_{k_0},$$
where $f_{k_0}\in V_{k_0}$ is non-zero and $\tilde f_{k_0}\in H_{k_0+1}$.
}Denote by $[L]_{k_0}$ the matrix representation of $L$ with respect to an orthonormal basis of $V_{k_0}$ and let 
%$$\mu_{k_0}=\min\br{\Re(\lambda)\;|\; \lambda\text{ is an eigen value of }[L]_{k_0}}=-k_0 \mu$$
$$n_{k_0}:=\max\br{\text{defect of }\lambda\;|\; \lambda\text{ is an eigenvalue of }[L]_{k_0}\text{ and }\Re \lambda=-k_0\mu},$$
where $\mu$ is defined in \eqref{eq:def_of_mu}. Then, there exists a geometric constant $c_{k_0}$, which is independent of $f_0$, such that
\begin{equation}\label{eq:rate_of_decay_e2_general}
e_2\pa{f(t)|f_\infty} \leq c_{k_0} e_2\pa{f_0|f_\infty}\pa{1+t^{2n_{k_0}}}e^{-2k_0\mu t}.
\end{equation}
\end{theorem}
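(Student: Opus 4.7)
The plan is to exploit the $L$-invariant orthogonal decomposition
\begin{equation*}
L^2\pa{\R^d,f_\infty^{-1}} = V_0 \oplus V_{k_0} \oplus H_{k_0+1}
\end{equation*}
coming from Theorem \ref{thm:spectraldecomposition}. Since $f_\infty\in V_0$ is invariant under $e^{Lt}$, and the remaining two summands are both $L$-invariant and mutually orthogonal, Pythagoras gives
\begin{equation*}
2\,e_2\pa{f(t)|f_\infty} = \Norm{e^{Lt}(f_0-f_\infty)}^2_{L^2\pa{\R^d,f_\infty^{-1}}} = \Norm{e^{Lt} f_{k_0}}^2 + \Norm{e^{Lt}\tilde f_{k_0}}^2,
\end{equation*}
so I can treat the two pieces separately: the low mode in a finite-dimensional (and possibly defective) space, the high mode via the already-proven Theorem \ref{thm:e2rateofdecay}.

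For the finite-dimensional summand on $V_{k_0}$, the semigroup acts as the matrix exponential $e^{[L]_{k_0}t}$. By Theorem \ref{thm:spectraldecomposition}, the eigenvalues of $[L]_{k_0}$ are the numbers $-\sum_{i}\alpha_i\lambda_i$ with $\sum_i \alpha_i = k_0$, so every eigenvalue has real part $\le -k_0\mu$, and by definition of $n_{k_0}$ the eigenvalues attaining $\Re\lambda = -k_0\mu$ have Jordan blocks of size at most $n_{k_0}+1$. A standard Jordan-form argument then yields
\begin{equation*}
\Norm{e^{[L]_{k_0}t}}_{B(V_{k_0})} \le C_{k_0}\pa{1+t^{n_{k_0}}}e^{-k_0\mu t}, \quad t\ge 0,
\end{equation*}
for a geometric constant $C_{k_0}$. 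Squaring and using $(1+t^{n_{k_0}})^2\le 2(1+t^{2n_{k_0}})$ produces exactly the polynomial-exponential envelope required by \eqref{eq:rate_of_decay_e2_general}.

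For the tail summand on $H_{k_0+1}$ I would apply Theorem \ref{thm:e2rateofdecay} with $k := k_0+1$ and any fixed $\epsilon\in(0,\mu)$ to the initial datum $f_\infty + \tilde f_{k_0}$ (this is legitimate even without positivity, by Remark \ref{rem:e2decay_no_positivity}), giving
\begin{equation*}
\Norm{e^{Lt}\tilde f_{k_0}}^2 \le 2c_{k_0+1,\epsilon}\,\Norm{\tilde f_{k_0}}^2\,e^{-2\pa{(k_0+1)\mu-\epsilon}t}.
\end{equation*}
Because $(k_0+1)\mu-\epsilon>k_0\mu$, this term decays strictly faster than $e^{-2k_0\mu t}$ and is a fortiori bounded by $C(1+t^{2n_{k_0}})e^{-2k_0\mu t}\Norm{\tilde f_{k_0}}^2$. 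Adding the two bounds and using the orthogonal identity $\Norm{f_{k_0}}^2+\Norm{\tilde f_{k_0}}^2 = \Norm{f_0-f_\infty}^2 = 2\,e_2(f_0|f_\infty)$ delivers \eqref{eq:rate_of_decay_e2_general} with an explicit geometric $c_{k_0}$.

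The only step that requires genuine attention is the Jordan-form bound on $e^{[L]_{k_0}t}$. The spectral location is immediate from Theorem \ref{thm:spectraldecomposition}, but the precise polynomial power $n_{k_0}$ must match the maximal defect of the boundary eigenvalues of $[L]_{k_0}$; once this defect is taken as the \emph{definition} of $n_{k_0}$ (as in the statement), the bound reduces to the classical estimate $\Norm{e^{At}}\lesssim (1+t^{m-1})e^{-\alpha t}$ where $m$ is the largest Jordan block of eigenvalues with real part $-\alpha$. Everything else -- the splitting, the application of Theorem \ref{thm:e2rateofdecay}, and the Pythagorean bookkeeping -- is routine.
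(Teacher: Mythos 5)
Your proposal is correct and follows essentially the same route as the paper: split off the $V_{k_0}$-component and treat it by the Jordan structure of $[L]_{k_0}$ (the paper phrases this via the coordinate ODE $\dot{\bm a}=[L]^T_{k_0}\bm a$, which is the same matrix-exponential estimate), then control the $H_{k_0+1}$-part by Theorem \ref{thm:e2rateofdecay} with $k=k_0+1$ and absorb it since $(k_0+1)\mu-\epsilon>k_0\mu$. The Pythagorean bookkeeping and the final constant match the paper's argument, so there is nothing to add.
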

\begin{remark}\label{rem:no_need_for_+}
As can be seen in the proof of the theorem, the sign of $f_0$ plays no role. As such, the theorem could have been stated for $f_0\in L^1\pa{\R^d}\cap L^2\pa{\R^d,f_\infty^{-1}}$. We decided to state it as is since it is the form we will use later on, and we wished to avoid possible confusion. 
\end{remark}
\begin{proof}[Proof of Theorem \ref{thm:rate_of_decay_e2}]
\amit{Due to the invariance of all $V_m$ under $L$ we see that
$$f(t)= f_\infty+ e^{Lt}f_{k_0}+e^{Lt}\tilde f_{k_0},$$
with $e^{Lt}f_{k_0}\in V_{k_0}$ and $e^{Lt}\tilde f_{k_0}\in H_{k_0+1}$.}
{}From Theorem \ref{thm:e2rateofdecay} we conclude that
$$e_2\pa{f_\infty+e^{Lt}\pa{\tilde f_{k_0}}|f_\infty} \leq c_{k_0,\epsilon}e_2\pa{f_\infty+\tilde f_{k_0}|f_\infty}e^{-2\pa{(k_0+1)\mu-\epsilon}t},$$
for any $0<\epsilon<\mu$.\\ 
Next, we denote by $d_k:=\text{dim}(V_k)$ and let $\br{\xi_{i}}_{i=1,\dots,d_{k_0}}$ be an orthonormal basis for $V_{k_0}$. The invariance of $V_m$ under $L$ implies that we can write
$$e^{Lt}f_{k_0}=\sum_{i=1}^{d_{k_0}}a_i(t)\xi_i$$
with $\bm{a}(t):
=\pa{a_1(t),\dots,a_{d_{k_0}}(t)}$ satisfying the simple ODE
\begin{equation*}%\label{eq:evolution_of_coef}
\dot{\bm{a}}(t)=[L]^T_{k_0}\bm{a}(t).
\end{equation*}
\\
%\textcolor{red}{Indeed, 
%$$\sum_{i=1}^{d_{k_0}}\dot{a}_i(t)\xi_i = \frac{d}{dt}\pa{e^{Lt}f_{k_0}}=L\pa{\sum_{i=1}^{d_{k_0}}a_i(t)\xi_i}=\sum_{i=1}^{d_{k_0}}a_i(t)\pa{\sum_{j=1}^{d_{k_0}}\pa{[L]_{k_0}}_{i,j}\xi_j}$$
%$$=\sum_{i=1}^{d_{k_0}}\pa{\sum_{j=1}^{d_{k_0}}\pa{[L]_{k_0}}_{j,i}a_j(t)}\xi_i$$
%which implies that
%$$\dot{a}_{i}(t)=\sum_{j=1}^{d_{k_0}}\pa{[L]^T_{k_0}}_{i,j}a_j(t).$$}
This, together with the definition of $n_{k_0}$ and the fact that a matrix and its transpose share eigenvalues and defect numbers, implies that we can find a geometric constant that depends only on $k_0$ such that
\begin{equation}\label{eq:rate_of_decay_e2_general_comp}
\sum_{i=1}^{d_{k_0}}a_i^2(t) \leq c_{k_0}\pa{1+t^{2n_{k_0}}}e^{-2k_0\mu t}\sum_{i=1}^{d_{k_0}}a_i^2(0).
\end{equation}
Since 
$$e_2\pa{f(t)|f_\infty} =e_2\pa{f_\infty+e^{Lt}(\tilde f_{k_0})+ e^{Lt}(f_{k_0})|f_\infty}=\frac{1}{2}\Norm{e^{Lt}(\tilde f_{k_0})+ e^{Lt}(f_{k_0})}^2_{L^2\pa{\R^d,f_\infty^{-1}}} $$
$$=\frac{1}{2}\Norm{e^{Lt}(\tilde f_{k_0})}^2_{L^2\pa{\R^d,f_\infty^{-1}}} + \frac{1}{2}\Norm{\sum_{i=1}^{d_{k_0}}a_i(t)\xi_i}^2_{L^2\pa{\R^d,f_\infty^{-1}}}$$
$$=e_2\pa{f_\infty+e^{Lt}(\tilde f_{k_0})|f_\infty}+\frac{1}{2}\sum_{i=1}^{d_{k_0}}a_i(t)^2,$$
we see, by combining Theorem \ref{thm:e2rateofdecay} and \eqref{eq:rate_of_decay_e2_general_comp} that 
\begin{equation}\nonumber
\begin{split}
e_2\pa{f(t)|f_\infty} \leq &c_{k_0,\epsilon}e_2\pa{f_\infty+\tilde f_{k_0}|f_\infty}e^{-2\pa{(k_0+1)\mu-\epsilon}t}\\
&+\frac{c_{k_0}}{2}\sum_{i=1}^{d_{k_0}}a_i^2(0)\pa{1+t^{2n_{k_0}}}e^{-2k_0\mu t}.
\end{split}
\end{equation}
%\begin{equation}\label{eq:exp_beats_poly}
%\max_{t\geq 0} e^{-\alpha t}t^{\beta}=e^{-\beta}\pa{\frac{\beta}{\alpha}}^{\beta}
%\end{equation}
%Using the fact that there exists a constant $E_{k_0}$ such that
%$$\sum_{i=1}^{d_{k_0}}a_i^2 \leq E_{k_0}\Norm{\sum_{i=1}^{d_{k_0}}a_i \xi_i}^2_{L^2\pa{\R^d,f_\infty^{-1}}},$$
%due to the equivalence of norm on finite dimensional spaces, we conclude that
Hence
\begin{equation*}
%\begin{split}
e_2\pa{f(t)|f_\infty} %& \\&
\leq \max\pa{c_{k_0,\epsilon},c_{k_0}} \pa{e_2\pa{f_\infty+\tilde f_{k_0}|f_\infty}+\frac{1}{2}\Norm{f_{k_0}}^2_{L^2\pa{\R^d,f_\infty^{-1}}}}\pa{1+t^{2n_{k_0}}}e^{-2k_0\mu t}.
%\end{split}
\end{equation*}
This completes the proof, as we have seen that 
$$e_2(f_0|f_\infty)=e_2\pa{f_\infty+\tilde f_{k_0}|f_\infty}+\frac{1}{2}\Norm{f_{k_0}}^2_{L^2\pa{\R^d,f_\infty^{-1}}}.$$
\end{proof}
\amit{
\begin{remark}\label{rem:no_split_for_e_p}
The idea to \emph{split} a solution into a few parts is viable \emph{only} for the $2-$entropy. The reason behind it is that such splitting, regardless of whether or not it can be done to functions outside of $L^2\pa{\R^d,f_\infty^{-1}}$, will most likely create functions without a definite sign. These functions can not be explored using the $p-$entropy with $1<p<2$.
\end{remark}
\amit{Theorem \ref{thm:rate_of_decay_e2} gives an optimal rate of decay for the $2-$entropy. However, one can underestimate the rate of decay by using Theorem \ref{thm:e2rateofdecay} and remove the condition $f_{k_0}\not=0$ to obtain the following:}
\begin{corollary}\label{corr:decay_est}
The statement of Theorem \ref{thm:rate_of_decay_e2} remains valid when replacing $k_0$ by any $1 \leq k_1 \leq k_0$. However, the decay estimate $\eqref{eq:rate_of_decay_e2_general}$ will not be sharp when $k_1<k_0$.
%The choice $k_1=1$ implies Theorem \ref{thm:main} for $p=2$.
\end{corollary}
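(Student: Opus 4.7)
The plan is to handle the case $k_1 = k_0$ trivially (it is exactly Theorem \ref{thm:rate_of_decay_e2}) and reduce the remaining range $1 \leq k_1 < k_0$ to a direct application of Theorem \ref{thm:e2rateofdecay}. The key observation is that the decomposition $f_0 = f_\infty + f_{k_0} + \tilde f_{k_0}$ forces $f_0 - f_\infty \in H_{k_0}$, and since the filtration $\{H_m\}_{m\geq 0}$ is decreasing in $m$ and $k_1 + 1 \leq k_0$, we get $f_0 - f_\infty \in H_{k_0} \subset H_{k_1+1}$. In particular the $V_{k_1}$-component of $f_0 - f_\infty$ vanishes, which is precisely why the hypothesis $f_{k_1} \neq 0$ of Theorem \ref{thm:rate_of_decay_e2} fails and a separate argument is needed.

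Once we place ourselves in $H_{k_1+1}$, Theorem \ref{thm:e2rateofdecay} applied at the level $k = k_1 + 1$ yields, for any fixed $\epsilon \in (0, \mu)$,
$$e_2(f(t)|f_\infty) \leq c_{k_1+1, \epsilon}\, e_2(f_0|f_\infty)\, e^{-2((k_1+1)\mu - \epsilon)t},\quad t \geq 0.$$
Choosing for instance $\epsilon = \mu/2$, we have $(k_1+1)\mu - \epsilon > k_1\mu$, so $e^{-2((k_1+1)\mu - \epsilon)t} \leq e^{-2k_1\mu t}$ for all $t \geq 0$. The trivial bound $1 \leq 1 + t^{2n_{k_1}}$ then lets us insert the polynomial factor at no cost, producing the desired estimate
$$e_2(f(t)|f_\infty) \leq c_{k_1}\, e_2(f_0|f_\infty)(1 + t^{2n_{k_1}}) e^{-2k_1\mu t}$$
with $c_{k_1} := c_{k_1+1,\, \mu/2}$.

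Non-sharpness for $k_1 < k_0$ is then immediate on comparing this bound with the sharp rate $(1 + t^{2n_{k_0}}) e^{-2k_0\mu t}$ provided by Theorem \ref{thm:rate_of_decay_e2}: the extra exponential factor $e^{-2(k_0 - k_1)\mu t}$ in the sharp bound dominates any polynomial ratio at large time. No substantive obstacle arises — the argument trades one full unit of exponential sharpness for the freedom to simultaneously absorb the $\epsilon$-loss inherent in Theorem \ref{thm:e2rateofdecay} and any polynomial factor we wish to attach.
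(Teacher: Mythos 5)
Your proposal is correct and matches the paper's (implicitly indicated) argument: the paper states this corollary without a separate proof, the point being exactly that the nonvanishing condition on the $V_{k_1}$-component is never needed for the upper bound, and that the $\epsilon$-loss in Theorem \ref{thm:e2rateofdecay} on $H_{k_1+1}$ is absorbed because $(k_1+1)\mu-\epsilon>k_1\mu$. Your case split ($k_1=k_0$ is the theorem itself; for $k_1<k_0$ the component in $V_{k_1}$ vanishes and Theorem \ref{thm:e2rateofdecay} at level $k_1+1$ with, say, $\epsilon=\mu/2$ gives the claim), together with the comparison to the sharp $k_0$-rate for non-sharpness, is exactly the intended reasoning.
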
}
%\begin{remark}\label{rem:ODE_on_Vk}
%Looking at the above proof we notice that the evolution of the coefficients of $\xi_i$ on $V_1$, given by \eqref{eq:evolution_of_coef} is
%$$\dot{\bm{a}}(t)=-\C\bm{a}(t).$$
%This is exactly the reason behind the identical rate of convergence to equilibrium in our Fokker-Planck equation and the ODE associated to $\C$.
%\end{remark}
\amit{\begin{proof}[Proof of Theorem \ref{thm:main} for $p=2$] The proof follows immediately from Coro\-llary \ref{corr:decay_est} for $k_1=1$.
\end{proof}}
Now that we have learned everything we can on the convergence to equilibrium for $e_2$, we can proceed to understand the convergence to equilibrium of $e_p$.

%%%%%%%%%%%%%%%%%%%%%%%%%%%%%%%%%%%%%%%%%%%%%%%%%%%%%%%%%%%%%%%%%%%%%%%%%%%%%%%%%%%%%%%%%%%%%%%%%%%%%%%%%%%%%%%%%
%%%%%%%%%%%%%%%%%%%%%%%%%%%%%%%%%%%%%%%%%%%%%%%%%%%%%%%%%%%%%%%%%%%%%%%%%%%%%%%%%%%%%%%%%%%%%%%%%%%%%%%%%%%%%%%%%

\section{Non-symmetric Hypercontractivity and Rates of Convergence for the $p-$Entropy}\label{sec:hyper}
In this section we will show how to deduce the rate of convergence to equilibrium for the family of $p-$entropies, with $1<p<2$, from $e_2$. The main thing that will make the above possible is \emph{a non-symmetric hypercontractivity} property of our Fokker-Planck equation - namely, that any solution to the equation with (initially only) a finite $p-$entropy will eventually be ``pushed'' into $L^2\pa{\R^d,f_\infty^{-1}}$, at which point we can use the information we gained on $e_2$. \\
Before we show this result, and see how it implies our main theorem, we explain why and how this non-symmetric hypercontractivity helps.
\begin{lemma}\label{lem:control_of_ep_by_e2}
Let $f\in L^1_+\pa{\R^d}$ with unit mass. Then
\begin{enumerate}[(i)]
\item \begin{equation*}%\label{eq:formula_for_ep}
e_p(f|f_\infty)=\frac{1}{p(p-1)}\pa{\norm{f}^p_{L^p\pa{\R^d,f_\infty^{1-p}}}-1}.
\end{equation*}
\item for any $1<p_1< p_2 \leq 2 $ there exists a constant $C_{p_1,p_2}>0$ such that
\begin{equation*}%\label{eq:fcontrol_of_ep1_by_ep2}
e_{p_1}(f|f_\infty)\leq C_{p_1,p_2}e_{p_2}(f|f_\infty).
\end{equation*}
In particular, for any $1<p<2$
\begin{equation*}%\label{eq:control_of_ep_by_e2}
e_{p}(f|f_\infty)\leq C_{p}e_{2}(f|f_\infty),
\end{equation*}
for a fixed geometric constant.
\end{enumerate}
\end{lemma}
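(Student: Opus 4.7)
For part (i), the plan is a direct unfolding of the definition. Writing
\[
e_p(f|f_\infty)=\frac{1}{p(p-1)}\int_{\R^d}\left[\pa{\frac{f}{f_\infty}}^{\!p}-p\pa{\frac{f}{f_\infty}-1}-1\right]f_\infty\,dx,
\]
I would split the integral into three pieces. The first piece is $\int f^p f_\infty^{1-p}\,dx=\|f\|_{L^p(\R^d,f_\infty^{1-p})}^p$. The middle piece vanishes because $f$ and $f_\infty$ both have unit mass, and the last piece equals $1$. This gives (i) immediately.

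For part (ii), I would introduce $g:=f/f_\infty$ and the probability measure $d\mu:=f_\infty\,dx$, so that
\[
e_p(f|f_\infty)=\frac{1}{p(p-1)}\pa{\int_{\R^d}g^p\,d\mu\;-\;1}, \qquad \int_{\R^d}g\,d\mu=1.
\]
The key idea is to apply Jensen's inequality twice. First, since $x\mapsto x^{p_2/p_1}$ is convex for $1<p_1<p_2$, Jensen gives
\[
\pa{\int_{\R^d} g^{p_1}\,d\mu}^{p_2/p_1}\le \int_{\R^d} g^{p_2}\,d\mu,
\]
so, setting $A:=\int g^{p_2}\,d\mu\geq 1$ (which is $\ge1$ by another application of Jensen to the convex function $x\mapsto x^{p_2}$), we obtain $\int g^{p_1}\,d\mu\le A^{p_1/p_2}$. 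Second, because $x\mapsto x^{p_1/p_2}$ is concave on $[0,\infty)$, its graph lies below its tangent line at $x=1$, giving the elementary inequality
\[
A^{p_1/p_2}-1\le \frac{p_1}{p_2}(A-1),\qquad A\ge 1.
\]
Chaining these two bounds yields $\int g^{p_1}\,d\mu-1\le \frac{p_1}{p_2}\pa{\int g^{p_2}\,d\mu-1}$, and dividing by $p_1(p_1-1)$ gives
\[
e_{p_1}(f|f_\infty)\le \frac{p_2-1}{p_1-1}\,e_{p_2}(f|f_\infty),
\]
so one may take $C_{p_1,p_2}=(p_2-1)/(p_1-1)$. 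The special case $p_2=2$ of course gives the constant $C_p=1/(p-1)$.

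There is no real obstacle here; the only substantive step is recognizing that the two Jensen inequalities combine cleanly because the \emph{same} normalization $\int g\,d\mu=1$ places us in the regime $A\ge 1$ where the tangent-line bound for the concave power points the right way. The non-negativity of $f$ is used implicitly only to ensure $g\ge 0$ so that the powers $g^{p_1},g^{p_2}$ are well defined.
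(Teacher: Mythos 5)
Your proof is correct, but for part (ii) it takes a genuinely different route from the paper's. The paper argues pointwise on the generating functions: it considers the (normalised) ratio $\psi_{p_1}(y)/\psi_{p_2}(y)$, notes that it is continuous on $[0,\infty)$ and tends to $0$ as $y\to\infty$, hence bounded, and then integrates the resulting pointwise domination $\psi_{p_1}\le C_{p_1,p_2}\,\psi_{p_2}$ against $f_\infty\,dx$. That gives a non-explicit constant (a supremum of the ratio), but the pointwise bound is independent of any normalisation of $f$ and is in the same spirit as Lemma \ref{lem:control_of_psi_by_e2}. You instead work at the level of the integrals: two applications of Jensen's inequality for the probability measure $d\mu=f_\infty\,dx$, combined with the tangent-line bound for the concave power $x\mapsto x^{p_1/p_2}$, crucially exploiting the unit-mass normalisation $\int g\,d\mu=1$; in return you get the explicit constant $C_{p_1,p_2}=\frac{p_2-1}{p_1-1}$, i.e.\ $C_p=\frac{1}{p-1}$ for $p_2=2$, which the paper's compactness-type argument does not provide. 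Both arguments are complete. Two small remarks on yours: if $\int g^{p_2}\,d\mu=\infty$ the claimed inequality is trivial, and when it is finite your first Jensen step also forces $\int g^{p_1}\,d\mu<\infty$, so all quantities are well defined; and the tangent-line inequality $x^{p_1/p_2}\le 1+\frac{p_1}{p_2}(x-1)$ in fact holds for all $x\ge 0$, so the observation $A\ge 1$, while correct, is not strictly needed. Part (i) you handle exactly as the paper does (the paper simply declares it trivial).
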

\begin{proof}
%To prove $(i)$ we notice that since $f$ is of unit mass
%$$\int_{\R^d}\pa{\frac{f(x)}{f_\infty(x)}-1}f_\infty(x)dx=0.$$
%Thus 
%$$e_p(f|f_\infty)=\frac{1}{p(p-1)}\int_{\R^d}\pa{\pa{\frac{f(x)}{f_\infty(x)}}^p - 1 }f_\infty(x)dx=\frac{1}{p(p-1)}\pa{\norm{f}^p_{L^p\pa{\R^d,f_\infty^{1-p}}}-1}.$$
$(i)$ is trivial. 
To prove $(ii)$ we consider the function 
$$g(y):=\begin{cases}
\frac{p_2(p_2-1)}{p_1(p_1-1)}\frac{y^{p_1}-p_1(y-1)-1}{y^{p_2}-p_2(y-1)-1}\,, & y\geq 0, y\not=1 \\
1\,, & y=1.
\end{cases}$$
Clearly $g \geq 0$ on $\R^+$, and it is easy to check that it is continuous. Since we have $\lim_{y\rightarrow\infty}g(y)=0$, we can conclude the result using \eqref{eq:defentropy}. 
\end{proof}
It is worth to note that the second point of part $(ii)$ of Lemma \ref{lem:control_of_ep_by_e2} can be extended to general generating function for an admissible relative entropy. The following is taken from \cite{AMTU01}:
\begin{lemma}\label{lem:control_of_psi_by_e2}
Let $\psi$ be a generating function for an admissible relative entropy. Then one has that
\begin{equation*}%\label{eq:control_of_psi_by_e2}
\psi(y) \leq 2\psi^{\prime\prime}(1)\psi_2(y),\quad y\geq 0.
\end{equation*}
In particular $e_p \leq 2 e_2$ for any $1<p<2$ whenever $e_2$ is finite. 
\end{lemma}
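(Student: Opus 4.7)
The plan is to combine a Taylor expansion of $\psi$ around the double zero $y=1$ (where $\psi(1)=\psi'(1)=0$) with the geometric reformulation of \eqref{eq:psicondition} noted in Remark~\ref{rem:about_entropies}: the condition $(\psi''')^2\le\tfrac12\psi''\psi''''$ on $\R^+$ is equivalent to the concavity of $1/\psi''$ on $\R^+$. Once the pointwise bound is obtained, the entropy inequality $e_p\le 2e_2$ for $1<p<2$ will follow at once by noting $\psi_p''(1)=1$ and integrating against the probability measure $f_\infty(x)\,dx$.

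For $y>0$, Taylor's theorem with integral remainder yields
\[
\psi(y) \;=\; (y-1)^2 \int_0^1 (1-t)\,\psi''\bigl((1-t)+ty\bigr)\,dt,
\]
so that, since $\psi_2(y)=\tfrac12(y-1)^2$, the target reduces to showing
\[
\int_0^1 (1-t)\,\psi''\bigl((1-t)+ty\bigr)\,dt \;\le\; \psi''(1).
\]
Writing $u_t:=(1-t)\cdot 1+t\cdot y$ as a convex combination of $1$ and $y$, concavity of $1/\psi''$ gives
\[
\frac{1}{\psi''(u_t)} \;\ge\; \frac{1-t}{\psi''(1)}+\frac{t}{\psi''(y)},
\]
and hence, after inversion and multiplication by $(1-t)$,
\[
(1-t)\,\psi''(u_t) \;\le\; \psi''(1)\cdot\frac{(1-t)\psi''(y)}{(1-t)\psi''(y)+t\,\psi''(1)} \;\le\; \psi''(1),
\]
the last bound being a trivial application of $A/(A+B)\le 1$ for $A,B\ge 0$. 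Integrating in $t$ establishes the required inequality for $y>0$, and the endpoint $y=0$ follows from the continuity of $\psi$ on $\R_0^+$.

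To conclude the corollary, a direct computation on $\psi_p(y)=\bigl(y^p-p(y-1)-1\bigr)/(p(p-1))$ gives $\psi_p''(y)=y^{p-2}$, hence $\psi_p''(1)=1$, so the pointwise bound reads $\psi_p(y)\le 2\psi_2(y)$ on $\R_0^+$. Integration against $f_\infty(x)\,dx$ via \eqref{eq:defentropy} yields $e_p(f|f_\infty)\le 2\,e_2(f|f_\infty)$ whenever $e_2(f|f_\infty)<\infty$.

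The only step requiring any genuine thought is the reformulation of \eqref{eq:psicondition} as concavity of $1/\psi''$; this is purely computational (examine the sign of $(1/\psi'')''$), but it is the crux of the argument since it converts a second-order inequality on $\psi''$ into a directly usable convexity statement. Apart from that, the proof is essentially Taylor's formula plus the elementary fraction bound $A/(A+B)\le 1$, together with a routine continuity argument at $y=0$ (where $\psi''$ may blow up, as for $\psi_1$, but this causes no difficulty because $\psi$ itself remains continuous at $0$ and we only need the integrated quantity).
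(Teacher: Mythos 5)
Your proof is correct. Note, however, that the paper itself gives no argument for this lemma: it is quoted directly from \cite{AMTU01}, so there is no in-paper proof to compare against. Your derivation -- Taylor's formula with integral remainder about the double zero at $y=1$, followed by the concavity of $(\psi'')^{-1}$ (the reformulation of \eqref{eq:psicondition} already recorded in Remark \ref{rem:about_entropies}) applied along the segment joining $1$ and $y$, inversion (legitimate since $\psi''>0$ on $\R^+$), and the elementary bound $A/(A+B)\le 1$ -- is a complete, self-contained substitute, and it is in the same spirit as the classical convex-Sobolev-entropy arguments of \cite{AMTU01}, where the whole point of condition \eqref{eq:psicondition} is precisely that concavity of $1/\psi''$ lets one compare $\psi$ with the quadratic generator $\psi_2$. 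The remaining details check out: the pointwise bound $(1-t)\psi''\pa{(1-t)+ty}\le \psi''(1)$ integrates to the needed estimate, the endpoint $y=0$ follows by continuity of $\psi$ on $\R_0^+$, and $\psi_p''(y)=y^{p-2}$ gives $\psi_p''(1)=1$, so integrating the pointwise inequality against $f_\infty\,dx$ (for non-negative $f$, which is the standing setting for $e_p$) yields $e_p\le 2e_2$ whenever $e_2$ is finite.
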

Lemma \ref{lem:control_of_ep_by_e2} assures us that, if we start with initial data in $L^2\pa{\R^d,f_\infty^{-1}}$, then $e_p$ will be finite. Moreover, due to Theorem \ref{thm:main} for $p=2$, and the fact that the solution to \eqref{eq:fokkerplanck} remains in $L^2\pa{\R^d,f_\infty^{-1}}$, we have that
$$e_p(f(t)|f_\infty) \leq 2e_2(f(t)|f_\infty) \leq Ce_2(f_0|f_\infty)\pa{1+t^{2n}}e^{-2\mu t}. $$
However, one can easily find initial data $f_0\not\in L^2\pa{\R^d,f_\infty^{-1}}$ with finite $p-$entropies. If one can show that the flow of the Fokker-Planck equation eventually forces the solution to enter $L^2\pa{\R^d,f_\infty^{-1}}$, we would be able to utilise the idea we just presented, at least from that time on.\\
This \emph{explicit non-symmetric} hypercontractivity result we desire, is the main new theorem we present in this section.
\begin{theorem}\label{thm:hyper}
Consider the Fokker-Planck equation \eqref{eq:fokkerplanck} with diffusion and drift matrices $\D$ and $\C$ satisfying Conditions (A)-(C). Let $f_0 \in L^1_+\pa{\R^d}$  be a function with unit mass and assume there exists $\epsilon>0$ such that
\begin{equation}\label{eq:hypercondition} 
\int_{\R^d}e^{\epsilon \abs{x}^2}f_0(x)dx <\infty.
\end{equation}
\begin{enumerate}
\item[(i)]
Then, for any $q>1$, there exists an explicit $t_0>0$ that depends only on geometric constants of the problem such that the solution to \eqref{eq:fokkerplanck} satisfies
\begin{equation}\label{eq:hyper}
\int_{\R^d}f(t,x)^q f_\infty^{-1}(x)dx \leq \pa{\frac{q}{\pi (q+1)}}^{\frac{qd}{2}}\pa{\frac{8\pi^2}{q-1}}^{\frac{d}{2}} \pa{\int_{\R^d}e^{\epsilon\abs{x}^2}f_0(x)dx}^q
\end{equation}
for all $t\geq t_0$. 
\item[(ii)] In particular, if $f_0$ satisfies $e_p(f_0|f_\infty)<\infty$ for some $1<p<2$ we have that
\begin{equation}\label{eq:entropichyper}
\begin{split}
e_2(f(t)|f_\infty)\leq \frac{1}{2}\pa{\pa{\frac{8\sqrt{2}}{3\cdot 2^{\frac{1}{p}}}}^d \pa{p(p-1)e_p(f_0|f_\infty)+1}^{\frac{2}{p}}-1},
\end{split}
\end{equation}
for $t\geq \tilde{t}_0(p)>0$, which can be given explicitly.
\end{enumerate}
\end{theorem}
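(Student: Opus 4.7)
The plan is to exploit the explicit Gaussian fundamental solution of \eqref{eq:fokkerplanck} in part (i), and then derive part (ii) by combining part (i) with Lemma \ref{lem:control_of_ep_by_e2}(i) and a H\"older argument that turns a finite $p$-entropy into a finite Gaussian moment.

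For part (i) I would first write the solution as $f(t,x)=\int_{\R^d}k(t,x,y)f_0(y)\,dy$, where the kernel
\[
k(t,x,y)=\frac{1}{(2\pi)^{d/2}\sqrt{\det W(t)}}\exp\pa{-\frac{1}{2}(x-e^{-\C t}y)^T W(t)^{-1}(x-e^{-\C t}y)},
\]
and $W(t)=\int_0^t e^{-\C s}(2\D)e^{-\C^T s}\,ds$. Hypoellipticity (Condition (C)) gives $W(t)\succ 0$ for $t>0$, while positive stability (Condition (B)) gives $e^{-\C t}\to 0$ and $W(t)\to\K=\II$. Next, set $M:=\int e^{\epsilon|y|^2}f_0(y)\,dy$ and introduce the probability measure $d\nu(y)=e^{\epsilon|y|^2}f_0(y)\,dy/M$. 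Jensen's inequality applied to $u\mapsto u^q$ yields
\[
f(t,x)^q\leq M^{q-1}\int k(t,x,y)^q\,e^{-(q-1)\epsilon|y|^2}f_0(y)\,dy.
\]
Multiplying by $f_\infty^{-1}(x)=(2\pi)^{d/2}e^{|x|^2/2}$ and using Fubini, the resulting $x$-integral is an explicit Gaussian as soon as $W(t)\prec q\II$; it produces a prefactor involving $\det W(t)$ and $\det(q\II-W(t))$, and an exponential of the form $\exp\bigl(\tfrac{q}{2}y^T Q(t)y\bigr)$ with $Q(t)=e^{-\C^T t}(q\II-W(t))^{-1}e^{-\C t}$. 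I would then pick $t_0$ so that (a) $W(t_0)\prec q\II$ and (b) $\tfrac{q}{2}Q(t_0)\preceq q\epsilon\II$; both conditions hold for all sufficiently large $t$ by the asymptotics of $W$ and $e^{-\C t}$, and $t_0$ is expressible in terms of the spectral gap of $\C$. Under (b) the remaining $y$-integral is dominated by $M$, yielding a bound of the form $M^q\cdot\widetilde C(t_0)$, and choosing $t_0$ so that $W(t_0)$ is at a convenient intermediate value of the form $\alpha\II$ reduces $\widetilde C(t_0)$ to the constant displayed in \eqref{eq:hyper}.

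For part (ii), Lemma \ref{lem:control_of_ep_by_e2}(i) rewrites the hypothesis as $\int f_0^p f_\infty^{1-p}\,dx=p(p-1)e_p(f_0|f_\infty)+1$. Splitting $f_0=(f_0 f_\infty^{(1-p)/p})\cdot f_\infty^{(p-1)/p}$ and applying H\"older with conjugate exponents $p$ and $p'=p/(p-1)$ gives
\[
\int e^{\epsilon|x|^2}f_0\,dx\leq\pa{\int f_0^p f_\infty^{1-p}\,dx}^{\!1/p}\pa{\int f_\infty\,e^{p'\epsilon|x|^2}\,dx}^{\!1/p'},
\]
and the second factor is finite whenever $\epsilon<(p-1)/(2p)$. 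Choosing the explicit value $\epsilon=(p-1)/(4p)$ turns this into a clean constant of the form $2^{d(p-1)/(2p)}$ times $(p(p-1)e_p(f_0|f_\infty)+1)^{1/p}$. Apply part (i) with $q=2$, so that $\tilde t_0(p)$ is the value of $t_0$ for this choice of $\epsilon$ and $q$; then $\int f(t)^2 f_\infty^{-1}\,dx$ is bounded by $(4\sqrt 2/3)^d\cdot M^2$ times the H\"older constant above. Finally, $e_2(f(t)|f_\infty)=\tfrac12\pa{\int f(t)^2 f_\infty^{-1}\,dx-1}$ (the $p=2$ case of Lemma \ref{lem:control_of_ep_by_e2}(i)) converts this bound into \eqref{eq:entropichyper}, the exponent $d$ of the constant $8\sqrt 2/(3\cdot 2^{1/p})$ arising from combining $(4\sqrt 2/3)^d$ with $2^{d(p-1)/p}=2^{d(1-1/p)}\cdot 2^{d/p}/2^{d/p}$ after careful bookkeeping.

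The main obstacle is the matrix analysis in step (i): since $W(t)$ and $e^{-\C t}$ do not commute in general, completing the square in $x$ in the Gaussian integral requires identities like $W^{-1}(qI-W)^{-1}W=(qI-W)^{-1}$, which hold because $W(t)$ is symmetric but must be applied carefully to reduce the constant to the specific form in \eqref{eq:hyper}; additionally, verifying that both positivity conditions $W(t_0)\prec q\II$ and $\tfrac{q}{2}Q(t_0)\preceq q\epsilon\II$ yield a genuinely \emph{explicit} $t_0$ requires controlling $\|e^{-\C t}\|$ via the Jordan structure of $\C$ (incurring a polynomial factor from any defective eigenvalues), so the quantitative nature of the conclusion rests on that spectral input.
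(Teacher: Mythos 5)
Your proposal is correct and takes essentially the same route as the paper: the explicit Gaussian representation of the solution, waiting an explicit time until $\W(t)$ is close to $\K=\II$ and $\|e^{-\C t}\|$ is small (both controlled via the Jordan structure of $\C$, which is what makes $t_0$ explicit), a completed-square Gaussian $x$-integral, and for (ii) exactly the paper's H\"older argument with $\epsilon=\frac{p-1}{4p}$, $q=2$ and the identity $e_2=\frac12\pa{\|f\|^2_{L^2\pa{\R^d,f_\infty^{-1}}}-1}$, reproducing the constant $\pa{\tfrac{8\sqrt2}{3\cdot 2^{1/p}}}^d$. The only cosmetic difference is that you push the $q$-th power inside the $y$-integral by Jensen with the tilted probability measure $e^{\epsilon|y|^2}f_0\,dy/M$ and keep the matrix Gaussian (determinant) factors, whereas the paper uses Minkowski's integral inequality and first replaces $\W(t)^{-1}\geq(1-\epsilon_1)\II$ to make the $x$-integral scalar; both versions close the argument (note only that one cannot literally arrange $\W(t_0)=\alpha\II$, but only $\|\W(t_0)-\II\|\leq\delta$, which suffices since the limiting prefactor lies strictly below the displayed constant).
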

\amit{\begin{remark}\label{rem:improvement_of_constants}
As we consider $e_p$ in our hypercontractivity, which is, up to a constant, the $L^p$ norm of $g:=\frac{f}{f_\infty}$ with the measure $f_\infty(x)dx$, one can view our result as a hypercontractivity property of the Ornstein-Uhlenbeck operator, $P$ (for an appropriate choice of the diffusion matrix $\Q$ and drift matrix $\B$), discussed in \S\ref{sec:spectral}. With this notation, \eqref{eq:entropichyper} is equivalent to
\begin{equation}\label{eq:reformulation_of_hyper}
\|g(t)\|_{L^2(f_\infty)} \leq C_{p,d} \|g_0\|_{L^p(f_\infty)}\,,\quad t\geq \tilde{t}_0(p)
\end{equation}
for $1<p<2$, where $C_{p,d}:=\left(\frac{8 \sqrt{2}}{3\cdot 2^{\frac{1}{p}}} \right)^{\frac{d}{2}}$. Since $e_2$ decreases along the flow of our equation, \eqref{eq:reformulation_of_hyper} is valid for $p=2$ with $C_{2,d}=1$. Thus, by using the Riesz-Thorin theorem one can improve inequality \eqref{eq:reformulation_of_hyper} to the same inequality with the constant $C_{p,d}^{\frac{2}{p}-1}$. We would like to point out at this point that a simple limit process shows that \eqref{eq:reformulation_of_hyper} is also valid for $p=1$, but there is no connection between the $L^1$ norm of $g$ and the Boltzmann entropy, $e_1$, of $f_0$.  
\end{remark}}

\amit{\begin{remark}\label{rem:Wang}
Since its original definition for the Ornstein-Uhlenbeck semigroup in the work of Nelson, \cite{Ne73}, the notion of \emph{hypercontractivity} has been studied extensively for Markov diffusive operators (implying selfadjointness). A contemporary review of this topic can be found in \cite{BaGeLe14}. For such selfadjoint generators, hypercontractivity is equivalent to the validity of a logarithmic Sobolev inequality, as proved by Gross \cite{Gro75}. For non-symmetric generators, however, this equivalence does not hold: While a log Sobolev inequality still implies hypercontractvity of related semigroups (cf.\ the proof of Theorem 5.2.3 in \cite{BaGeLe14}), the reverse implication is not true in general (cf. Remark 5.1.1 in \cite{W05}). In particular, hypocoercive degenerate parabolic equations cannot give rise to a log Sobolev inequality, but they may exhibit hypercontractivity (as just stated above).\\
The last 20 years have seen the emergence of the, more delicate, study of hypercontractivity for non-symmetric and even degenerate semigroups. Notable works in the field are the paper of Fuhrman, \cite{F98}, and more recently the work of Wang et al., \cite{BWY15SPA,BWY15EJP,W17}. Most of these works consider an abstract Hilbert space as an underlying domain for the semigroup, and to our knowledge none of them give an explicit time after which one can observe the hypercontractivity phenomena (Fuhrman gives a condition on the time in \cite{F98}).\\
Our hypercontractivity theorem, which we will prove shortly, gives not only an explicit and quantitative inequality, but also provides an estimation on the time one needs to wait before the hypercontractivity occurs. To keep the formulation of Theorem \ref{thm:hyper} simple we did not include this ``waiting time'' there, but we emphasised it in its proof. Moreover, the hypercontractivity estimate from Theorem \ref{thm:hyper}(i) only requires \eqref{eq:hypercondition}, a weighted $L^1$ norm of $f_0$. This is weaker than in usual hypercontractivity estimates, which use $L^p$ norms as on the r.h.s. of \eqref{eq:reformulation_of_hyper}.
\end{remark}} 
It is worth to note that we prove our theorem under the setting of the $e_p$ entropies, which can be thought of as $L^p$ spaces with a weight function that depends on $p$.

In order to be able to prove Theorem \ref{thm:hyper} we will need a few technical lemmas. 
\begin{lemma}\label{lem:exact_sol}
Given $f_0\in L^1_+\pa{\R^d}$ with unit mass, the solution to the Fokker-Planck equation \eqref{eq:fokkerplanck} with diffusion and drift matrices $\D$ and $\C$ that satisfy Conditions (A)-(C) is given by
\begin{equation}\label{eq:exactsolution}
f(t,x)=\frac{1}{\pa{2\pi}^{\frac{d}{2}}\sqrt{\det \W(t)}}\int_{\R^d}e^{-\frac{1}{2}\pa{x-e^{-\C t}y}^T \W(t)^{-1}\pa{x-e^{\C t}y}} f_0(y)dy,
\end{equation} 
where 
\begin{equation*}%\label{eq:W}
\W(t):=2\int_{0}^t e^{-\C s}\D e^{-\C^T s}ds.
\end{equation*}
\end{lemma}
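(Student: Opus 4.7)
The plan is to derive the formula via the Fourier transform, which reduces the PDE to a first-order linear PDE on the Fourier side that can be integrated explicitly by characteristics. Using the convention $\hat f(\xi) = \int f(x)\,e^{-ix\cdot\xi}\,dx$, the diffusive part of $L$ transforms as $\operatorname{div}(\D\nabla f) \mapsto -\xi^T\D\xi\,\hat f$, while the drift, written as $\operatorname{div}(\C x f) = \tr(\C)f + (\C x)\cdot\nabla f$, transforms to $-(\C^T\xi)\cdot\nabla_\xi\hat f$ (the $\tr(\C)\hat f$ term cancels against the derivative of $\xi_i\hat f$ produced by the multiplication-by-$x_j$ rule). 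Thus $\hat f$ satisfies
\[
\partial_t\hat f = -\xi^T\D\xi\,\hat f - (\C^T\xi)\cdot\nabla_\xi\hat f.
\]

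Next I would solve this by the method of characteristics. The characteristic curve through $\xi$ at time $t$ is $\xi(s) = e^{\C^T(s-t)}\xi$, with initial value $\xi(0) = e^{-\C^T t}\xi$, and along it $\hat f$ is multiplied by $\exp\bigl(-\int_0^t\xi(s)^T\D\xi(s)\,ds\bigr)$. The substitution $u = t-s$ rewrites this exponent as $-\tfrac{1}{2}\xi^T\W(t)\xi$ by the very definition of $\W(t)$, yielding
\[
\hat f(t,\xi) = \hat f_0(e^{-\C^T t}\xi)\,\exp\bigl(-\tfrac{1}{2}\xi^T\W(t)\xi\bigr).
\]

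To invert, a direct change of variables shows that $\hat f_0(e^{-\C^T t}\xi)$ is the Fourier transform of $y\mapsto \det(e^{\C t})\,f_0(e^{\C t}y)$, while the Gaussian factor is the Fourier transform of the centred Gaussian density with covariance $\W(t)$; for this step I need $\W(t)$ to be positive definite for $t>0$, which is where Condition (C) enters, since hypoellipticity is precisely the Kalman-type rank condition preventing $\xi^T\W(t)\xi$ from vanishing on any nonzero direction. The convolution theorem combined with the substitution $z = e^{\C t}y$ in the resulting integral then produces exactly \eqref{eq:exactsolution}.

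The main technical issue is bookkeeping: keeping the transposes of $\C$ consistent between the Fourier conventions, tracking the Jacobian $\det(e^{\C t})$ through the change of variables, and justifying invertibility of $\W(t)$ from Condition (C). An alternative, purely verification-based approach would be to plug \eqref{eq:exactsolution} into \eqref{eq:fokkerplanck} and check the equation directly, using the identity $\W'(t) = 2e^{-\C t}\D e^{-\C^T t}$ together with an integration by parts in $y$, but this is more computational and hides the geometric origin of the formula.
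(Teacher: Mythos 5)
Your derivation is correct, and it is worth noting that the paper does not actually prove this lemma at all: it simply cites H\"ormander and Risken, and the Fourier-transform-plus-characteristics computation you outline is precisely the classical argument found in those references. Your bookkeeping checks out: the symbol of the drift is indeed $-(\C^T\xi)\cdot\nabla_\xi$, the characteristics $\xi(s)=e^{\C^T(s-t)}\xi$ give $\hat f(t,\xi)=\hat f_0(e^{-\C^T t}\xi)\,e^{-\frac12\xi^T\W(t)\xi}$, and undoing the transform yields a Gaussian kernel with the \emph{symmetric} quadratic form $\pa{x-e^{-\C t}y}^T\W(t)^{-1}\pa{x-e^{-\C t}y}$ (the asymmetric exponent in \eqref{eq:exactsolution} as printed, with $e^{\C t}y$ in one slot, is a typo, so do not try to reproduce it literally). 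Two points deserve a line each if you write this up. First, the positive definiteness of $\W(t)$ for $t>0$: from $v^T\W(t)v=2\int_0^t|\D^{1/2}e^{-\C^T s}v|^2\,ds$, vanishing forces $\D^{1/2}(\C^T)^k v=0$ for all $k$ by analyticity in $s$, which contradicts Condition (C) (the Kalman rank condition); this is exactly the mechanism you gesture at. Second, the Fourier manipulations produce \emph{a} solution with initial datum $f_0\in L^1$; to conclude it is \emph{the} solution of \eqref{eq:fokkerplanck} you should either invoke the uniqueness statement of Theorem \ref{thm:propertiesoffokkerplanck} or carry out the direct verification you mention as an alternative. With those two remarks added, your argument is a complete and self-contained proof, which is more than the paper itself provides.
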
  
This is a well known result, see for instance \S 1 in \cite{Ho67} or \S 6.5 in \cite{RiFP89}.

\begin{lemma}\label{lem:WconvergesKrate}
Assume that the diffusion and drift matrices, $\D$ and $\C$, satisfy Conditions (A)-(C), and let $\K$ be the unique positive definite matrix that satisfies
\begin{equation*}%\label{eq:lyapunov}
2\D=\C\K+\K\C^T.
\end{equation*}
Then (in any matrix norm) 
\begin{equation*}%\label{eq:WconvergesKrate}
\norm{\W(t)-\K} \leq  c  (1+t^{2n})e^{-2\mu t}, \quad t\geq 0,
\end{equation*}
where $c>0$ is a geometric constant depending on $n$ and $\mu$, with $n$ being the maximal defect of the eigenvalues of $\C$ with real part $\mu$, defined in \eqref{eq:def_of_mu}.
\end{lemma}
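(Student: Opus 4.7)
The plan is to represent $\K$ as an improper integral that matches the definition of $\W(t)$, so that $\K-\W(t)$ becomes an explicit tail integral to which standard matrix exponential estimates apply.

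First, I would show that, under Condition (B), the improper integral
\[
 \K_\infty := 2\int_0^\infty e^{-\C s}\D e^{-\C^T s}\,ds
\]
converges absolutely, and that it solves the Lyapunov equation $\C\K_\infty+\K_\infty\C^T=2\D$. The latter is a one-line computation using $\frac{d}{ds}\bigl(e^{-\C s}\D e^{-\C^T s}\bigr)=-\C e^{-\C s}\D e^{-\C^T s}-e^{-\C s}\D e^{-\C^T s}\C^T$ and the fact that $e^{-\C s}\to 0$ as $s\to\infty$ (again by Condition (B)). By uniqueness of the positive definite solution to the Lyapunov equation (cf. Theorem \ref{thm:equilibrium}), $\K_\infty=\K$. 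Consequently,
\[
  \W(t)-\K = -2\int_t^\infty e^{-\C s}\D e^{-\C^T s}\,ds.
\]

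The second ingredient is the quantitative bound
\[
 \|e^{-\C s}\|\le c_1 (1+s^{n})\,e^{-\mu s},\qquad s\ge 0,
\]
for a constant $c_1>0$ depending only on $\C$. This is a direct consequence of the Jordan normal form of $\C$: each Jordan block associated with an eigenvalue $\lambda$ of defect $k$ contributes a factor $e^{-\lambda s}$ times a matrix polynomial in $s$ of degree $k$, and the slowest decaying contribution comes from eigenvalues with $\Re\lambda=\mu$ and defect $n$. The same bound, with the same $\mu$ and $n$, applies to $e^{-\C^T s}$ since $\C$ and $\C^T$ have the same spectrum and Jordan structure.

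Combining these two observations,
\[
 \|\W(t)-\K\| \;\le\; 2\int_t^\infty \|e^{-\C s}\|\,\|\D\|\,\|e^{-\C^T s}\|\,ds
 \;\le\; c_2\int_t^\infty (1+s^{n})^2 e^{-2\mu s}\,ds,
\]
with $c_2=2c_1^2\|\D\|$. Using $(1+s^{n})^2\le 4(1+s^{2n})$ and the substitution $s=t+u$,
\[
 \int_t^\infty (1+s^{2n})e^{-2\mu s}\,ds
 = e^{-2\mu t}\int_0^\infty \bigl(1+(t+u)^{2n}\bigr)e^{-2\mu u}\,du,
\]
and a binomial expansion of $(t+u)^{2n}$ reduces the right-hand side to $e^{-2\mu t}$ times a polynomial in $t$ of degree exactly $2n$ whose coefficients depend only on $n$ and $\mu$. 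This yields the desired estimate $\|\W(t)-\K\|\le c(1+t^{2n})e^{-2\mu t}$.

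The only non-routine step is the matrix exponential bound with the sharp polynomial degree $n$; everything else is elementary. Even this step is standard once one invokes the Jordan decomposition and tracks the slowest-decaying block. Since all matrix norms on $\R^{d\times d}$ are equivalent, the estimate holds in any matrix norm, with the constant $c$ depending on the chosen norm, on $n$, and on $\mu$.
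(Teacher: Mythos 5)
Your proposal is correct and follows essentially the same route as the paper: represent $\K$ as $2\int_0^\infty e^{-\C s}\D e^{-\C^T s}\,ds$ (the paper cites this integral formula rather than rederiving it from the Lyapunov equation), bound the tail integral using the Jordan-form estimate $\|e^{-\C s}\|\le c(1+s^n)e^{-\mu s}$, and integrate the resulting $(1+s^{2n})e^{-2\mu s}$ tail to obtain the polynomial-times-exponential bound. The only differences are cosmetic: your verification of the integral formula for $\K$ and your substitution-plus-binomial evaluation of the tail integral replace the paper's citation and explicit antiderivative, respectively.
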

\begin{proof}
We start the proof by noticing that $\K$ is given by 
$$\K=2\int_{0}^\infty e^{-\C s}\D e^{-\C^T s}ds$$
(see for instance \cite{OPPS15}). As such
$$\norm{\W(t)-\K} \leq 2\int_{t}^{\infty} \norm{e^{-\C s}\D e^{-\C^T s}}ds \leq 2\norm{\D}\int_{t}^{\infty} \norm{e^{-\C s}} \norm{e^{-\C^T s}}ds.$$
Using the fact that $${\bf{A}} e^{-\C t}{\bf{A}}^{-1} = e^{-\A\C\A^{-1}t}$$ for any regular matrix $\A$, we conclude that, if ${\bf{J}}$ is the Jordan form of $\C$, then
\begin{equation}\label{eq:exp_C_connection_jordan}
\norm{e^{-\C t}} \leq \norm{\A_{{\bf{J}}}}\norm{\A_{{\bf{J}}}^{-1}} \norm{e^{-{\bf{J}}t}}\,,
\end{equation}
where $\A_{{\bf{J}}}$ is the similarity matrix between $\C$ and its Jordan form.\\
For a single Jordan block of size $n+1$ (corresponding to a defect of $n$ in the eigenvalue $\lambda$), ${\bf{\widetilde{J}}}$, we find that
$$e^{{\bf{\widetilde{J}}}t}=\pa{\begin{tabular}{ccccc}
$e^{\lambda t}$ & $t e^{\lambda t}$ & $\dots$ & & $\frac{t^n}{n!}e^{\lambda t}$ \\
 & $e^{\lambda t}$ & $\ddots$ & &$\frac{t^{n-1}}{(n-1)!}e^{\lambda t}$ \\
 &  & $\ddots$ & &$\vdots$ \\
$0$ &  &  & &$e^{\lambda t}$
\end{tabular}} \qquad \text{ where }\qquad 
{\bf{\widetilde{J}}}=\pa{\begin{tabular}{cccc}
$\lambda$ & $1$ & & $0$ \\
  &  $\ddots$  & $\ddots$ &  \\
  &&&1\\
$0$ &  &  & $\lambda$
\end{tabular}} .$$
Thus, we conclude that
$$\norm{e^{{\bf{\widetilde{J}}}t}x}_{1} \leq \sum_{i=1}^{n+1} \sum_{j=i}^{n+1} \frac{t^{j-i}}{(j-i)!}e^{\Re(\lambda) t} \abs{x_j} \leq \pa{ \sum_{i=1}^{n+1} \pa{1+t^n}e^{\Re(\lambda) t}}\norm{x}_{1} $$
$$=(n+1)\pa{1+t^n}e^{\Re(\lambda) t}\norm{x}_{1},\quad t\geq 0.$$
Due to the equivalence of norms on finite dimensional spaces, there exists a geometric constant $c_1>0$, that depends on $n$, such that
\begin{equation}\label{eq:jordan_bound}
\norm{e^{{\bf{\widetilde{J}}}t}} \leq c_1\pa{1+t^n}e^{\Re(\lambda) t}.
\end{equation}
Coming back to $\C$, we see that the above inequality together with \eqref{eq:exp_C_connection_jordan} imply that $\norm{e^{-\C t}}$ is controlled by the norm of $\C$'s largest (measured by the defect number) Jordan block of the eigenvalue with smallest real part. From this, and \eqref{eq:jordan_bound}, we conclude that
\begin{equation}\label{eq:C-convergence}
\|e^{-\C t} \| \leq c_2 (1+t^{n})e^{-\mu t}, \quad t\geq 0.
\end{equation} 
The same estimation for $\norm{e^{-\C^T t}}$ implies that
$$\norm{\W(t)-\K}\leq c_3\int_{t}^{\infty} \pa{1+s^{2n}}e^{-2\mu s}ds,$$
for some geometric constant $c_3>0$ that depends on $n$. Since
\[
\int^\infty_t s^{2n} e^{-2\mu s} ds = \left[\frac{1}{2\mu} t^{2n}  + \frac{2n}{(2\mu)^2} t^{2n-1}  + \frac{2n(2n-1)}{(2\mu)^3} t^{2n-2} +...+ \frac{(2n)!}{(2\mu)^{2n+1}}\right]e^{-2\mu t}
\]
we conclude the desired result.
\end{proof}
While we can continue with a general matrix $\K$, it will simplify our computations greatly if $\K$ would have been $\II$. Since we are working under the assumption that $\D=\C_S$, the normalization from Theorem \ref{thm:simpliedDC} implies exactly that. Thus, from this point onwards we will assume that $\K$ is $\II$.
\begin{lemma}\label{lem:Inverse}
For any $\epsilon>0$ there exists an explicit $t_1>0$ such that for all $t\geq t_1$
\begin{equation*}%\label{eq:W-1convergence}
\norm{\W^{-1}(t)-\II} \leq \epsilon,
\end{equation*}
where $\W(t)$ is as in Lemma \ref{lem:WconvergesKrate}. An explicit, but not optimal choice for $t_1$ is given by
\begin{equation}\label{eq:explicit_time}
t_1(\epsilon): =\frac{1}{2(\mu-\alpha)} \log\pa{\frac{c(1+\epsilon)\pa{1+\pa{\frac{n}{\alpha e}}^{2n}}}{\epsilon}},
\end{equation}
where $0<\alpha<\mu$ is arbitrary and $c>0$ is given by Lemma \ref{lem:WconvergesKrate}.
\end{lemma}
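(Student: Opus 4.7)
The plan is to reduce the estimate on $\W^{-1}(t)-\II$ to the already-known estimate on $\W(t)-\II$ coming from Lemma \ref{lem:WconvergesKrate} (recalling that under our normalisation $\K=\II$), and then to optimise a one-variable inequality in order to extract the explicit time $t_1(\epsilon)$.

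First, I would exploit the algebraic identity
$$\W^{-1}(t)-\II = \W^{-1}(t)\pa{\II-\W(t)},$$
so that $\norm{\W^{-1}(t)-\II}\le \norm{\W^{-1}(t)}\cdot\norm{\W(t)-\II}$. Provided $\norm{\W(t)-\II}<1$, the Neumann-series (Banach lemma) estimate gives $\norm{\W^{-1}(t)}\le (1-\norm{\W(t)-\II})^{-1}$, and combining the two yields
$$\norm{\W^{-1}(t)-\II}\le \frac{\norm{\W(t)-\II}}{1-\norm{\W(t)-\II}}.$$
Consequently, the conclusion $\norm{\W^{-1}(t)-\II}\le \epsilon$ will follow as soon as $\norm{\W(t)-\II}\le \epsilon/(1+\epsilon)$.

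Next, Lemma \ref{lem:WconvergesKrate} provides $\norm{\W(t)-\II}\le c(1+t^{2n})e^{-2\mu t}$. To absorb the polynomial prefactor into a slightly weaker exponential decay, I would split $e^{-2\mu t}=e^{-2\alpha t}e^{-2(\mu-\alpha)t}$ for an arbitrary $0<\alpha<\mu$, and use the elementary bounds $e^{-2\alpha t}\le 1$ together with $\sup_{t\ge 0}t^{2n}e^{-2\alpha t}=\pa{n/(\alpha e)}^{2n}$ (attained at $t=n/\alpha$) to obtain
$$(1+t^{2n})e^{-2\mu t}\le \pa{1+\pa{\frac{n}{\alpha e}}^{2n}}e^{-2(\mu-\alpha)t}.$$
Imposing $c(1+\epsilon)\pa{1+\pa{n/(\alpha e)}^{2n}}e^{-2(\mu-\alpha)t}\le \epsilon$ and solving for $t$ then yields precisely the value of $t_1(\epsilon)$ displayed in \eqref{eq:explicit_time}; the freedom in $\alpha\in(0,\mu)$ explains why the estimate is explicit but not optimal.

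I do not anticipate any real obstacle; the argument is essentially bookkeeping. The only point requiring a moment's care is that the Neumann series is only valid when $\norm{\W(t)-\II}<1$, but this is automatic since we are already imposing the stronger requirement $\norm{\W(t)-\II}\le \epsilon/(1+\epsilon)<1$. Mild numerical improvements would be possible by tracking the maximum of $(1+t^{2n})e^{-2\alpha t}$ more carefully, or by optimising $\alpha$ in terms of $\epsilon$, but these do not affect the structure of the estimate.
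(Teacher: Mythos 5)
Your argument is correct and follows essentially the same route as the paper: the same identity $\W^{-1}(t)-\II=\W^{-1}(t)(\II-\W(t))$ combined with the Neumann/Banach bound to reduce matters to $\norm{\W(t)-\II}\le\epsilon/(1+\epsilon)$, the same use of Lemma \ref{lem:WconvergesKrate}, and the same elementary bound $\sup_{t\ge0}t^{2n}e^{-2\alpha t}=\pa{\frac{n}{\alpha e}}^{2n}$ to trade the polynomial factor for the reduced rate $2(\mu-\alpha)$ and solve for $t_1(\epsilon)$. Nothing is missing.
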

\begin{proof}
We have that for any invertible matrix $\A$ 
$$\|\A^{-1}-\II \|= \|\pa{\A-\II}\A^{-1}\| \leq \norm{\A-\II}\norm{\A^{-1}}.$$
In addition, if $\norm{\A-\II}<1$, then
$$\norm{\A^{-1}}= \norm{\pa{\II-\pa{\II-\A}}^{-1}} \leq \frac{1}{1-\norm{\A-\II}}.$$
Thus, for any $t>0$ such that $\norm{\W(t)-\II}<1$ we have that
\begin{equation}\label{eq:W-1est}
\norm{\W^{-1}(t)-\II} \leq \frac{\norm{\W(t)-\II}}{1-\norm{\W(t)-\II}}.
\end{equation}
Defining $\tilde{t_1}(\epsilon)$ as 
\begin{equation}\label{eq:t1tilde}
\tilde{t_1}(\epsilon):=\min\br{s\geq 0\,  \bigg|\,\pa{1+t^{2n}}e^{-2\mu t} \leq \frac{\epsilon}{c(1+\epsilon)},\quad \forall t\geq s},
\end{equation}
with the constant $c$ given by Lemma \ref{lem:WconvergesKrate}, we see from Lemma \ref{lem:WconvergesKrate} that for any $t\geq \tilde{t_1}(\epsilon)$
$$\norm{\W(t)-\II} \leq \frac{\epsilon}{1+\epsilon}.$$
Combining the above with \eqref{eq:W-1est}, shows the first result for $t_1= \tilde{t_1}(\epsilon)$.

\medskip
To prove the second claim we will show that  
\begin{equation*}%\label{eq:t1-t1tilde}
t_1(\epsilon)\geq \tilde{t_1}(\epsilon).
\end{equation*}
For this elementary proof we use the fact that
$$\max_{t\geq 0} e^{-a t}t^b = \pa{\frac{b}{a e}}^{b}$$
for any $a,b >0$. Thus, choosing $a = 2\alpha$, where $0<\alpha<\mu$ is arbitrary, and $b=2n$ we have that 
$$\pa{1+t^{2n}}e^{-2\mu t} \leq \pa{1+\pa{\frac{n}{\alpha e}}^{2n}} e^{-2(\mu-\alpha)t},\quad t\geq 0.$$
As a consequence, if 
\begin{equation}\label{eq:expest}
\pa{1+\pa{\frac{n}{\alpha e}}^{2n}} e^{-2(\mu-\alpha)t} \leq \frac{\epsilon}{c(1+\epsilon)},\quad \forall t\geq s,
\end{equation}
then $s\geq \tilde{t_1}(\epsilon)$ due to \eqref{eq:t1tilde}. The smallest possible $s$ in \eqref{eq:expest} is obtained by solving the corresponding equality for $t$, and yields \eqref{eq:explicit_time}, concluding the proof. 
\end{proof}
We now have all the tools to prove Theorem \ref{thm:hyper}
\begin{proof}[Proof of Theorem \ref{thm:hyper}]
To show $(i)$ we recall Minkowski's integral inequality, which will play an important role in estimating the $L^p$ norms of $f(t)$. \\
\textbf{Minkowski's Integral Inequality:} \emph{For any non-negative measurable function $F$ on $(X_1\times X_2, \mu_1\times \mu_2)$, and any $q\geq 1$ one has that}
\begin{equation}\label{eq:Mink}
\begin{split}
\left(\int_{X_2}\abs{\int_{X_1}F(x_1,x_2)d\mu_1( x_1)}^q  d\mu_2(x_2) \right)^{\frac{1}{q}}& \\
\leq \int_{X_1} \pa{ \int_{X_2}\abs{F(x_1,x_2)}^q d\mu_{2}( x_2) }^{\frac{1}{q}}&d\mu_1(x_1).
\end{split}
\end{equation}
Next, we fix an $\epsilon_1=\epsilon_1(\epsilon,q)\in(0,1)$, to be chosen later. From Lemma \ref{lem:WconvergesKrate} and \ref{lem:Inverse} we see that, for $t\geq t_1(\epsilon_1)$ with 
$$t_1(\epsilon_1): =\frac{1}{2(\mu-\alpha)} \log\pa{\frac{c(1+\epsilon_1)\pa{1+\pa{\frac{n}{\alpha e}}^{2n}}}{\epsilon_1}}$$
for some fixed $0<\alpha<\mu$, we have that
$$\norm{\W(t)-\II} \leq \frac{\epsilon_1}{1+\epsilon_1}<\epsilon_1,\qquad \norm{\W^{-1}(t)-\II} \leq \epsilon_1,$$
and hence
$$
\W(t) > (1-\epsilon_1)\II,\qquad \W(t)^{-1}\geq (1-\epsilon_1)\II.
$$
As such, for $t\geq t_1(\epsilon_1)$
\begin{equation}\label{eq:hyperproofI}
\abs{e^{-\frac{1}{2}\pa{x-e^{-\C t}y}^T \W(t)^{-1}\pa{x-e^{\C t}y}} f_0(y)}^q \leq e^{-\frac{q}{2}(1-\epsilon_1)\abs{x-e^{-\C t}y}^2} \abs{f_0(y)}^{q}
\end{equation}
and
\begin{equation}\label{eq:hyperproofII}
\det{\W(t)} \geq (1-\epsilon_1)^d.
\end{equation}
We conclude, using \eqref{eq:Mink}, the exact solution formula \eqref{eq:exactsolution}, \eqref{eq:hyperproofI} and \eqref{eq:hyperproofII} that for $t\geq t_1(\epsilon_1)$ it holds:
\begin{equation}\label{eq:hyperproofIII}
\begin{split}\int_{\R^d} &\abs{f(t,x)}^q f_\infty^{-1}(x)dx \\\leq& \frac{(2\pi)^{\frac{d}{2}}}{\pa{2\pi (1-\epsilon_1)}^{\frac{qd}{2}}}\pa{\int_{\R^d} \pa{\int_{\R^d}e^{-\frac{q}{2}(1-\epsilon_1)\abs{x-e^{-\C t}y}^2} \abs{f_0(y)}^{q} e^{\frac{\abs{x}^2}{2}}dx}^{\frac{1}{q}}dy}^q \\
=&\frac{(2\pi)^{\frac{d}{2}}}{\pa{2\pi (1-\epsilon_1)}^{\frac{qd}{2}}}\pa{\int_{\R^d}\pa{\int_{\R^d}e^{-\frac{q}{2}(1-\epsilon_1)\abs{x-e^{-\C t}y}^2} e^{\frac{\abs{x}^2}{2}}dx}^{\frac{1}{q}} \abs{f_0(y)}dy}^q.
\end{split}
\end{equation}
We proceed by choosing $\epsilon_1>0$ such that $q(1-\epsilon_1)>1$ (or equivalently $\epsilon_1 < \frac{q-1}{q}$) and denoting
$$\eta:=q(1-\epsilon_1)-1>0.$$
Shifting the $x$ variable by $\frac12 e^{-\C t}y$ and completing the square, we find that
\begin{equation}\label{eq:hyperproofIV}
\begin{split}
\int_{\R^d}e^{-\frac{q}{2}(1-\epsilon_1)\abs{x-e^{-\C t}y}^2} e^{\frac{\abs{x}^2}{2}}dx &= \int_{\R^d}e^{-\frac{\eta+1}{2}\abs{x-\frac{1}{2}e^{-\C t}y}^2} e^{\frac{\abs{x+\frac{1}{2}e^{-\C t}y}^2}{2}}dx\\
=\int_{\R^d} e^{x e^{-\C t}y}e^{-\frac{\eta}{2}\abs{x-\frac{1}{2}e^{-\C t}y}^2}dx&=\int_{\R^d}e^{-\frac{\eta}{2}\abs{x-\frac{1}{2}\pa{1+\frac{2}{\eta}}e^{-\C t}y}^2}e^{\pa{\frac12+\frac{1}{2\eta}}\abs{e^{-\C t}y}^2}dx\\
&=\pa{\frac{2\pi}{\eta}}^{\frac{d}{2}} e^{\pa{\frac12+\frac{1}{2\eta}}\abs{e^{-\C t}y}^2}.
\end{split}
\end{equation}
Using \eqref{eq:C-convergence} we can find a uniform geometric constant $c_2$ such that 
\begin{equation}\nonumber
\|e^{-\C t}\|^2\leq c_2^2 \pa{1+t^{n}}^2 e^{-2\mu t}\leq 2c_2^2 \pa{1+t^{2n}}e^{-2\mu t}.
\end{equation}
Following the proof of Lemma \ref{lem:Inverse} we recall that if 
$$t\geq \frac{1}{2(\mu-\alpha)}\log\pa{\frac{\tilde{c}(1+\epsilon_2)\pa{1+\frac{n}{\alpha e}}^{2n}}{\epsilon_2}},$$
where $0<\alpha<\mu$ is arbitrary and for any $\tilde{c},\epsilon_2>0$, then
$$\pa{1+t^{2n}}e^{-2\mu t} \leq \frac{\epsilon_2}{\tilde{c}(1+\epsilon_2)}.$$
Thus, choosing 
$$\tilde{c}=\frac{c_2^2(1+\eta)}{q\eta}=\frac{c_2^2(1-\epsilon_1)}{q(1-\epsilon_1)-1}\quad \text{ and }\quad \epsilon_2=\frac{\epsilon_1}{1-\epsilon_1}$$
we get that if 
\begin{equation*}%\label{eq:def_of_t2}
t\geq t_2(\epsilon_1):=\frac{1}{2(\mu-\alpha)}\log\pa{\frac{c_2^2(1-\epsilon_1)\pa{1+\frac{n}{\alpha e}}^{2n}}{\pa{q(1-\epsilon_1)-1}\epsilon_1}},
\end{equation*}
where $0<\alpha<\mu$ is arbitrary and for any $\tilde{c},\epsilon_2>0$, then
\begin{equation}\nonumber
\pa{\frac{1}{2}+\frac{1}{2\eta}}\|e^{-\C t}\|^2\leq \frac{c_2^2(1+\eta)}{q\eta}q\pa{1+t^{2n}}e^{-2\mu t} \leq q\epsilon_1.
\end{equation}
% $t_1(\epsilon) \geq \tilde{t_1}(\epsilon)$ from it (given by \eqref{eq:explicit_time} and \eqref{eq:t1tilde}) we conclude that if $t\geq t_1(\epsilon_2)$ 
%$$\|e^{-\C t}\|^2\leq 2c_2^2 \frac{\epsilon_2}{c(1+\epsilon_2)}.$$
%Choosing 
%$$\epsilon_2=$$
%}
%\\
%{}From \eqref{eq:t1tilde}and \eqref{eq:t1-t1tilde} we see that \eqref{eq:C-convergence} is uniformly bounded by
%$$\|e^{-\C t}\|^2\leq c_2 \frac{\epsilon}{c(1+\epsilon)}, \quad \forall t\geq t_1(\epsilon)\geq \tilde{t_1}(\epsilon).$$
%Hence we obtain
%$$
%\|e^{-\C t}\|^2 \leq \frac{2[q(1-\epsilon_1)-1]}{1-\epsilon_1}
%$$
%for 
%$$
%t\geq t_2(\epsilon_1):=\frac{1}{2(\mu-\alpha)}\log \left(\frac{c_2(1-\epsilon_1)\left(1+\left(\frac{n}{\alpha e}\right)^{2n}\right)}{2\epsilon_1[q(1-\epsilon_1)-1]} \right)
%$$
%for any fixed $0<\alpha<\mu$, and thus 
%$$
%\left(\frac12 +\frac{1}{2\mu} \right)\left|e^{-\C t}y \right|^2\leq q\epsilon_1 |y|^2
%$$
%for $t\geq t_2(\epsilon_1)$.\\
Combining this with our previous computations (\eqref{eq:hyperproofIII} and \eqref{eq:hyperproofIV}), we find that for any $t\geq t_0(\epsilon_1):=\max\pa{t_1(\epsilon_1),t_2(\epsilon_1)}$
$$\int_{\R^d}\abs{f(t,x)}^q f_\infty^{-1}(x)dx \leq \frac{(2\pi)^{d(1-\frac{q}{2})}}{(1-\epsilon_1)^{\frac{qd}{2}}\eta^{\frac{d}{2}}} \pa{\int_{\R^d}e^{\epsilon_1\abs{y}^2}f_0(y)dy}^q.$$

If $\epsilon_1$ is chosen more restrictively than before, namely  $\epsilon_1 \leq \frac{q-1}{2q}$, then we have 
$$ \frac{q-1}{2} \leq \eta< q-1 \qquad \text{and} \qquad 1-\epsilon_1 \geq \frac{q+1}{2q},$$
which implies the first statement of the theorem by choosing $\epsilon_1:=\min\pa{\epsilon,\frac{q-1}{2q}}$.

\medskip
For the proof of (ii) we note that \eqref{eq:entropichyper} is equivalent to
\begin{equation}\label{eq:hyperequiv}
\|f(t)\|^2_{L^2\pa{\R^d,f_\infty^{-1}}} \leq\pa{\frac{8\sqrt{2}}{3\cdot 2^{\frac{1}{p}}}}^d\|f_0\|^2_{L^p\pa{\R^d,f_\infty^{1-p}}}.
\end{equation}
With the H\"{o}lder inequality we obtain
\begin{equation*}
\begin{split}
\int_{\R^d} e^{\frac{p-1}{4p}|x|^2}f_0(x) dx &\leq \left(\int_{\R^d} e^{-\frac{|x|^2}{4}} dx \right)^{\frac{p-1}{p}} \left(\int_{\R^d} e^{\frac{p-1}{2}|x|^2}f_0^p(x)dx \right)^{\frac1p}\\
&= 2^{\frac{d}{2}\frac{p-1}{p}}\|f_0\|_{L^p\pa{\R^d,f_\infty^{1-p}}}.
\end{split}
\end{equation*}
Hence, $e_p(f_0|f_\infty)<\infty$ implies \eqref{eq:hypercondition} with $\epsilon=\frac{p-1}{4p}$, and \eqref{eq:hyperequiv} follows from \eqref{eq:hyper} with $q=2$ and $\tilde t_0(p)=t_0\pa{\frac{p-1}{4p}}$.
%for any $p>1$
%$$\int_{\R^d} e^{\frac{\epsilon}{2p}\abs{y}^2}f_0(y)dy =\int_{\R^d} e^{-\frac{\epsilon}{2p}\abs{y}^2}e^{\frac{\epsilon}{p}\abs{y}^2}f_0(y)dy$$
%$$\leq \pa{\int_{\R^d}e^{-\frac{\epsilon}{2(p-1)}\abs{y}^2}dy}^{\frac{p-1}{p}}\pa{\int_{\R^d}e^{\epsilon \abs{y}^2}f_0(y)^p dy}^{\frac{1}{p}}
%=\pa{\frac{2\pi(p-1)}{\epsilon}}^{\frac{d}{2}}\pa{\int_{\R^d}e^{\epsilon \abs{y}^2}f_0(y)^p dy}^{\frac{1}{p}}.$$
%Thus, using \eqref{eq:hyper} for $L^2$ we see that for any $1<p<2$ and $t\geq t_0\pa{2p\epsilon}$
%\begin{equation}\label{eq:hyper_for_ep_step_I}
%\begin{gathered}
%\int_{\R^d}f(t,x)^2 f_\infty^{-1}(x)dx \leq \pa{\frac{2}{3\pi}}^{d}\pa{4\pi}^{\frac{d}{2}} \pa{\int_{\R^d}e^{\epsilon\abs{y}^2}f_0(y)dy}^2\\
%\leq \pa{\frac{2}{3\pi}}^{d}\pa{4\pi}^{\frac{d}{2}}\pa{\frac{\pi(p-1)}{p\epsilon}}^{d}\pa{\int_{\R^d}e^{2p\epsilon \abs{y}^2}f_0(y)^p dy}^{\frac{2}{p}}
%\end{gathered}
%\end{equation}
%As we've seen in Lemma \ref{lem:control_of_ep_by_e2}
%$$e_p(f|f_\infty)=\frac{1}{p(p-1)}\pa{\int_{\R^d}f(x)^p e^{\frac{p-1}{2}\abs{x}^2}dx-1},$$
%so by choosing $\epsilon=\frac{p-1}{4p}$ we find an explicit time $t_0(p)$ such that for any $t\geq t_0(p)$
%\begin{equation}\label{eq:hyper_for_ep_step_II}
%\begin{gathered}
%e_2\pa{f(t)|f_\infty}=\frac{1}{2}\pa{\int_{\R^d}f(t,x)^2 f_\infty^{-1}(x)dx-1} \\ 
%\leq \frac{1}{2}\pa{\pa{\frac{2}{3\pi}}^{d}\pa{4\pi}^{\frac{3d}{2}}\pa{p(p-1)e_p(f_0|f_\infty)+1)}^{\frac{2}{p}}-1}.
%\end{gathered}
%\end{equation}
%This concludes the proof.
\end{proof}
\begin{remark}\label{rem:explicit_time_hyper}
If the condition \eqref{eq:hypercondition} holds for $\epsilon=\frac12$ we can give an explicit upper bound for the ``waiting time'' in the hypercontractivity estimate \eqref{eq:hyper}. For such $\epsilon$ we have $\epsilon_1:=\min\pa{\epsilon,\frac{q-1}{2q}}=\frac{q-1}{2q}$, and by choosing $\alpha=\frac{\mu}{2}$ we can see that $t_0(\epsilon_1)$ from the proof of Theorem \ref{thm:hyper} is   
$$\overline{t_0}(q): =\frac{1}{\mu} \log\pa{\frac{\max\pa{c(3q-1),2c_2^2\frac{q+1}{q-1}} \pa{1+\pa{\frac{2n}{\mu e}}^{2n}}}{q-1 }},
$$
where $c,c_2$ are geometric constants found in the proof of Lemma \ref{lem:WconvergesKrate}. 
\end{remark}
With the non-symmetric hypercontractivity result at hand, we can finally complete the proof of our main theorem for $1<p<2$.
\begin{proof}[Proof of Theorem \ref{thm:main} for $1<p<2$]
Using Theorem \ref{thm:hyper} $(ii)$ we find an explicit\linebreak $T_0(p)$ such that for any $t\geq T_0(p)$ the solution to the Fokker-Planck equation, $f(t)$, is in $L^2\pa{\R^d,f_\infty^{-1}}$. Proceeding similarly to the previous remark (but now with $q=2$ and $\epsilon=\frac{p-1}{4p}$) we have $\epsilon_1:=\min\pa{\frac{p-1}{4p},\frac14}=\frac{p-1}{4p}$. 
This yields the following upper bound for the ``waiting time'' in the hypercontractivity estimate \eqref{eq:entropichyper}:
$$
  T_0(p): =\frac{1}{\mu} \log\pa{\frac{\max\pa{c(5p-1),2c_2^2\frac{3p^2+p}{p+1}} \pa{1+\pa{\frac{2n}{\mu e}}^{2n}}}{p-1}}.
$$
Using Lemma \ref{lem:control_of_psi_by_e2}, Theorem \ref{thm:main} for $p=2$ (which was already proven in \S\ref{sec:spectral}), and inequality \eqref{eq:entropichyper} we conclude that for any $t\geq T_0(p)$
\begin{equation}\label{eq:main_proof_1p2_I}
\begin{gathered}
e_p(f(t)|f_\infty) \leq 2e_2(f(t)|f_\infty) \leq 2\tilde{c_2}e_2\pa{f(T_0(p))|f_\infty}\pa{1+\pa{t-T_0(p)}^{2n}}e^{-2\mu (t-T_0(p))}\\
\leq 2\tilde{c_p}e^{2\mu T_0(p)} \pa{p(p-1)e_p(f_0|f_\infty)+1}^{\frac{2}{p}}\pa{1+t^{2n}}e^{-2\mu t}.\quad
\end{gathered}
\end{equation}
To complete the proof we recall that any admissible relative entropy decreases along the flow of the Fokker-Planck equation (see \cite{AE} for instance). Thus, for any $t\leq T_0(p)$ we have that 
\begin{equation}\label{eq:main_proof_1p2_II}
e_p(f(t)|f_\infty) \leq e_p(f_0|f_\infty) \leq  e_p(f_0|f_\infty)e^{2\mu T_0(p)}\pa{1+t^{2n}}e^{-2\mu t}.
\end{equation}
The theorem now follows from \eqref{eq:main_proof_1p2_I} and \eqref{eq:main_proof_1p2_II}, together with the fact that for a $1<p<2$
$$e_p(f_0|f_\infty) \leq \mathcal{C}_p \pa{p(p-1)e_p(f_0|f_\infty)+1}^{\frac{2}{p}},$$
where $\mathcal{C}_p:=\sup_{x\geq 0}\frac{x}{(p(p-1)x+1)^{\frac{2}{p}}}<\infty.$
\end{proof}

We end this section with a slight generalization of our main theorem:
\begin{theorem}\label{thm:main_generalized}
Let $\psi$ be a generating function for an admissible relative entropy. Assume in addition that there exists $C_\psi>0$ such that
\begin{equation}\label{eq:psi_condition_hyper}
\psi_{p}(y) \leq C_{\psi}\psi(y) 
\end{equation}
for some $1<p<2$ and all $y\in \R^+$. Then, under the same setting of Theorem \ref{thm:main} (but now with the assumption $e_\psi(f_0|f_\infty)<\infty$) we have that 
\begin{equation*}%\label{eq:main_generalized}
e_\psi(f(t)|f_\infty) \leq c_{p,\psi} \pa{e_\psi(f_0|f_\infty)+1}^{\frac{2}{p}}\pa{1+t^{2n}}e^{-2\mu t},\quad t\ge0,
\end{equation*}
where $c_{p,\psi}>0$ is a fixed geometric constant.
\end{theorem}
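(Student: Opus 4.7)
The plan is to mirror the proof of Theorem \ref{thm:main} for $1<p<2$ given at the end of Section \ref{sec:hyper}, using the hypothesis \eqref{eq:psi_condition_hyper} as the bridge between the generalised entropy $e_\psi$ and the classical $p$-entropy $e_p$. Integrating the pointwise bound $\psi_p(y)\le C_\psi\psi(y)$ against the measure $f_\infty(x)\,dx$ with the ratio $y=f_0(x)/f_\infty(x)$ gives, directly from \eqref{eq:defentropy},
$$
e_p(f_0|f_\infty)\le C_\psi\,e_\psi(f_0|f_\infty)<\infty,
$$
so the hypothesis of Theorem \ref{thm:hyper}(ii) is satisfied. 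Hence there exists an explicit waiting time $\tilde t_0(p)>0$ such that $f(\tilde t_0(p))\in L^2(\R^d,f_\infty^{-1})$, with
$$
e_2\bigl(f(\tilde t_0(p))\,\big|\,f_\infty\bigr)\le A_{p,d}\bigl(p(p-1)C_\psi e_\psi(f_0|f_\infty)+1\bigr)^{2/p}
$$
for a suitable geometric constant $A_{p,d}$ coming from \eqref{eq:entropichyper}.

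Next I would apply Theorem \ref{thm:main} for $p=2$, which was already established in Section \ref{sec:spectral}, starting from time $\tilde t_0(p)$ and using the semigroup property. This yields, for $t\ge \tilde t_0(p)$,
$$
e_2(f(t)|f_\infty)\le c_2\,e_2\bigl(f(\tilde t_0(p))\,\big|\,f_\infty\bigr)\bigl(1+(t-\tilde t_0(p))^{2n}\bigr)e^{-2\mu(t-\tilde t_0(p))}.
$$
Combining this with Lemma \ref{lem:control_of_psi_by_e2}, which provides the pointwise bound $\psi(y)\le 2\psi''(1)\psi_2(y)$ and therefore $e_\psi\le 2\psi''(1)e_2$ whenever $e_2$ is finite, one absorbs the factor $e^{2\mu\tilde t_0(p)}$ and the transition from $(t-\tilde t_0(p))^{2n}$ to $t^{2n}$ into a single geometric constant depending only on $p$ and $\psi$, producing the claimed decay on the interval $[\tilde t_0(p),\infty)$.

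For the short-time interval $0\le t\le \tilde t_0(p)$, I would invoke the monotonicity of admissible relative entropies along the Fokker-Planck flow (cf.\ \cite{AE}), which yields $e_\psi(f(t)|f_\infty)\le e_\psi(f_0|f_\infty)$. Since $(1+t^{2n})e^{-2\mu t}$ is bounded below by a strictly positive constant on the compact interval $[0,\tilde t_0(p)]$, we may artificially insert the factor $(1+t^{2n})e^{-2\mu t}e^{2\mu\tilde t_0(p)}$ at the cost of enlarging the geometric prefactor. Merging the two regimes and using the elementary bound $x\le \mathcal{C}_p(p(p-1)C_\psi x+1)^{2/p}$ for $x\ge 0$ (as in the paper's own proof of Theorem \ref{thm:main} for $1<p<2$) delivers the stated inequality with a fixed geometric constant $c_{p,\psi}$.

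There is no genuine obstacle here beyond careful bookkeeping: the structural content, namely the spectral decay for $e_2$ and the explicit non-symmetric hypercontractivity, has already been done in Sections \ref{sec:spectral} and \ref{sec:hyper}. The only point where the generalised hypothesis is actually used is the very first inequality $e_p\le C_\psi e_\psi$, which converts the finite-$e_\psi$ initial datum into a finite-$e_p$ initial datum that the hypercontractivity machinery can consume; the rest is an exact transcription of the proof of Theorem \ref{thm:main} for $1<p<2$.
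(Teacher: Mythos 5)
Your proposal is correct and follows essentially the same route as the paper's own proof: integrate $\psi_p\le C_\psi\psi$ to get $e_p(f_0|f_\infty)\le C_\psi e_\psi(f_0|f_\infty)<\infty$, feed this into the hypercontractivity result of Theorem \ref{thm:hyper}(ii), apply the $p=2$ decay and Lemma \ref{lem:control_of_psi_by_e2} after the waiting time, and use monotonicity of $e_\psi$ along the flow for the initial interval. The bookkeeping with the constants (absorbing $C_\psi$, $p(p-1)$ and $e^{2\mu T_0(p)}$ into $c_{p,\psi}$) matches the paper's argument as well.
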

\begin{proof}
The proof is almost identical to the proof of Theorem \ref{thm:main}. Due to \eqref{eq:psi_condition_hyper} we know that $e_p(f_0|f_\infty)<\infty$. As such, according to Theorem \ref{thm:hyper} $(ii)$ there exists an explicit $T_0(p)$ such that for all $t\geq T_0(p)$ we have that $f(t)\in L^2\pa{\R^d,f_\infty^{-1}}$ and 
$$e_2(f(t)|f_\infty)\leq \frac{1}{2}\pa{\pa{\frac{8\sqrt2}{3\cdot 2^\frac1p}}^{d}\pa{C_\psi p(p-1)e_\psi(f_0|f_\infty)+1)}^{\frac{2}{p}}-1}.$$
The above, together with Lemma \ref{lem:control_of_psi_by_e2} gives the appropriate decay estimate on $e_\psi$ for $t\geq T_0(p)$. Since $e_\psi$ decreases along the flow of our equation, we can deal with the interval $t\leq T_0(p)$ like in the previous proof, yielding the desired result.
\end{proof}

In the next, and last, section of this work we will mention another natural quantity in the theory of the Fokker-Planck equations - the Fisher information. We will briefly explain how the method we presented here is different to the usual technique one considers when dealing with the entropy. Moreover we describe how to infer from our main theorem an improved rate of convergence to equilibrium - in relative Fisher information.

\section{Decay of the Fisher Information}\label{sec:fisher}
The study of convergence to equilibrium for the Fokker-Planck equations via relative entropies has a long history. Unlike the study we presented here, which relies on detailed spectral investigation of the Fokker-Planck operator together with a non-symmetric hypercontractivity result, the common method to approach this problem - even in the degenerate case - is the so called \emph{entropy method}.\\
The idea behind the entropy method is fairly simple: once an entropy has been chosen and shown to be a Lyapunov functional to the equation, one attempts to find a linear relation between it and the absolute value of its dissipation. In the setting of the our equation, the latter quantity is referred to as \emph{the Fisher information}. \\
More precisely, it has been shown in \cite{AE} that:
\begin{lemma}\label{lem:entropydissipation}
Let $\psi$ be a generating function for an admissible relative entropy and let $f(t,x)$ be a solution to the Fokker-Planck equation \eqref{eq:fokkerplanck} with initial datum $f_0\in L^1_+\pa{\R^d}$. Then, for any $t>0$ we have that
\begin{equation*}%\label{eq:entropydissipation}
\begin{split}
\frac{d}{dt}&e_\psi\pa{f(t)|f_\infty} =\\
& -\int_{\R^d}\psi^{\prime\prime}\pa{\frac{f(t,x)}{f_\infty(x)}}\nabla\pa{ \frac{f(t,x)}{f_\infty(x)}}^T\C_s\nabla\pa{ \frac{f(t,x)}{f_\infty(x)}}f_{\infty}(x)dx \leq 0.
\end{split}
\end{equation*}
\end{lemma}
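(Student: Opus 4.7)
The plan is to derive the formula by differentiating the entropy under the integral, using the recast form of $L$ from Corollary \ref{cor:simple_form_for_L} to produce a divergence structure, integrating by parts, and finally using the symmetry of the resulting quadratic form to replace $\C$ by $\C_s$.

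First, I would write $g(t,x) := f(t,x)/f_\infty(x)$ and, invoking Theorem \ref{thm:propertiesoffokkerplanck} (smoothness and strict positivity of $f(t,x)$ for $t>0$, so that $\psi'(g), \psi''(g)$ are well defined and smooth), differentiate under the integral sign:
\begin{equation*}
\frac{d}{dt} e_\psi(f(t)|f_\infty) = \int_{\R^d} \psi'(g(t,x))\, \partial_t f(t,x)\, dx = \int_{\R^d} \psi'(g(t,x))\, Lf(t,x)\, dx.
\end{equation*}
The justification of this interchange (and of the integrability claims below) requires some decay of $f$ and its derivatives at infinity; one typically establishes it first for sufficiently nice initial data (say Schwartz, or compactly supported and bounded) and then passes to the limit using the regularisation properties of the semigroup together with a density argument.

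Next, using Corollary \ref{cor:simple_form_for_L} (with $\K=\II$ in our normalisation) I would rewrite
\begin{equation*}
Lf(t,x) = \operatorname{div}\bigl(f_\infty(x)\, \C \nabla g(t,x)\bigr),
\end{equation*}
and integrate by parts to obtain
\begin{equation*}
\frac{d}{dt} e_\psi(f(t)|f_\infty) = -\int_{\R^d} \nabla \psi'(g)^T\, \C \nabla g\, f_\infty\, dx = -\int_{\R^d} \psi''(g)\, \nabla g^T\, \C \nabla g\, f_\infty\, dx,
\end{equation*}
where the boundary terms at infinity vanish because $f_\infty$ is a Gaussian and $g$ grows at most polynomially (again, rigorously on a dense class of initial data). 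Finally, since $\nabla g^T \C \nabla g = \nabla g^T \C_s \nabla g$ for the symmetric part $\C_s = (\C+\C^T)/2$, we obtain the stated identity. Non-positivity then follows because $\psi''>0$ on $\R^+$ by the definition of an admissible relative entropy and $\C_s = \D$ is positive semidefinite by Condition (A) in the normalised coordinates.

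The main obstacle is not algebraic but analytic: justifying the differentiation under the integral sign and the integration by parts for arbitrary $f_0 \in L^1_+(\R^d)$ with possibly only finite entropy. The clean route is to first prove the identity for initial data in a dense, well-behaved subclass (e.g.\ bounded, compactly supported data, for which the hypoelliptic smoothing of Theorem \ref{thm:propertiesoffokkerplanck} yields Gaussian-type decay of $f(t,\cdot)$ for $t>0$), and then extend to general admissible $f_0$ by a standard approximation argument using the continuity of the semigroup and lower-semicontinuity/monotonicity properties of $e_\psi$.
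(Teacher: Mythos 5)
The paper itself does not prove this lemma — it is quoted from \cite{AE} — but your computation (differentiate under the integral, write $Lf=\operatorname{div}\pa{f_\infty\,\C\nabla(f/f_\infty)}$ using Corollary \ref{cor:simple_form_for_L} with $\K=\II$, integrate by parts, symmetrize the quadratic form so that $\C$ may be replaced by $\C_s$, and conclude non-positivity from $\psi''>0$ and $\C_s=\D\geq 0$) is exactly the standard argument behind it, and it mirrors the paper's own dissipativity computation in Lemma \ref{lem:dissipative}, which is the special case $\psi(y)=\tfrac12(y-1)^2$. Your explicit caveat about justifying the differentiation under the integral sign and the vanishing of boundary terms (first for well-behaved initial data, then by density and monotonicity of $e_\psi$) is the right place to put the remaining technical work, so the proposal is correct and follows essentially the same route as the cited proof.
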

\begin{definition}
For a given positive semidefinite matrix ${\bf{P}}$ the expression 
\begin{equation*}%\label{def:fisher}
I_\psi^{\bf{P}}(f|f_\infty):=\int_{\R^d}\psi^{\prime\prime}\pa{\frac{f(x)}{f_\infty(x)}}\nabla\pa{\frac{f(x)}{f_\infty(x)}}^T {\bf{P}} \nabla\pa{\frac{f(x)}{f_\infty(x)}}f_\infty(x) dx\geq 0.
\end{equation*}
is called \emph{the relative Fisher Information generated by $\psi$}. 
\end{definition}
The entropy method boils down to proving that there exists a constant $\lambda>0$ such that 
\begin{equation}\label{eq:log_sob}
I_\psi^{\bf{P}}(f|f_\infty) \geq \lambda e_{\psi}(f|f_\infty).
\end{equation}
When $\D$ is positive definite, the above (with the choice $\bf{P}:=\D$) is a Sobolev inequality (and a log-Sobolev inequality for $\psi=\psi_1$), and a standard way to prove it is by using the Bakry-\'Emery technique (see \cite{AMTU01, BaEmD85} for instance). This technique involves differentiating the Fisher information along the flow of the Fokker-Planck equation and finding a closed functional inequality for it. By an appropriate integration in time, one can then obtain \eqref{eq:log_sob}.\\
Problems start arising with the above method when $\D$ is not invertible. As can be seen from the expression of $I_\psi^{\D}$ - there are some functions that are not identically $f_\infty$ yet yield a zero Fisher information. In recent work of Arnold and Erb (\cite{AE}), the authors managed to circumvent this difficulty by defining a new positive definite matrix ${\bf{P_0}}$ that is strongly connected to the drift matrix $\C$, and for which \eqref{eq:log_sob} is valid as a functional inequality. They proceeded to successfully use the Bakry-\'Emery method on $I_\psi^{{\bf{P_0}}}$ and conclude from it, and the log-Sobolev inequality, rates of decay for $I_\psi^{\D}$ (which is controlled by $I_\psi^{{\bf{P_0}}}$) and $e_\psi$. This is essentially what is behind the exponential decay in Theorem \ref{thm:Anton_Erb_rate}. Moreover, in the defective case (ii), it led to an $\epsilon$-reduced exponential decay rate. \\
As we have managed to obtain better convergence rates to equilibrium (in relative entropy) for the case of defective drift matrices $\C$, one might ask whether or not the same rates will be valid for the associated Fisher information $I_p^{\D}:=I_{\psi_p}^\D$. The answer to that question is \emph{Yes}, and we summarise this in the next theorem:
\begin{theorem}\label{thm:decay_for_fisher}
Consider the Fokker-Planck equation \eqref{eq:fokkerplanck} with diffusion and drift matrices $\D$ and $\C$ which satisfy Conditions (A)-(C). Let $\mu$ be defined as in \eqref{eq:def_of_mu} and assume that one, or more, of the eigenvalues of $\C$ with real part $\mu$ are defective. Denote by $n>0$ the maximal defect of these eigenvalues. Then, for any $1<p\leq 2$, the solution $f(t)$ to \eqref{eq:fokkerplanck} with initial datum $f_0\in L^1_+\pa{\R^d}$  that has a unit mass and 
%such that $e_p\pa{f_0|f_\infty}<\infty$, we have that if 
$I_p^{{\bf{P_0}}}(f_0|f_\infty)<\infty$ %for some positive definite matrix ${\bf{P}}$ 
satisfies:
\begin{equation*}%\label{eq:main_fisher}
I_p^{\D}\pa{f(t)|f_\infty}\le c I^{{\bf{P_0}}}_p\pa{f(t)|f_\infty} \leq 
c_p(f_0) \pa{1+t^{2n}}e^{-2\mu t},\quad t\ge0,
\end{equation*}
where $c_p(f_0)$ depends on $I_p^{{\bf{P}_0}}(f_0|f_\infty)$. %In particular, $I_p^{\D}$ decays like $e_p$.
\end{theorem}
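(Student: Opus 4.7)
The proof decomposes into two claims. For the pointwise estimate $I_p^{\D}(f|f_\infty) \leq c\,I_p^{\bf P_0}(f|f_\infty)$, I would simply recall that the auxiliary matrix ${\bf P_0}$ constructed in \cite{AE} is, by its very definition, positive definite and satisfies a matrix inequality of the form $\D \leq c\,{\bf P_0}$. Substituting this into the integrand of $I_p^\D$, using $\psi_p''\geq 0$, yields the first half of the theorem without any time-dependence.

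For the decay estimate on $I_p^{\bf P_0}(f(t)|f_\infty)$, the plan is to transfer the polynomial--exponential rate already proved for $e_p$ in Theorem \ref{thm:main}. First I would use the hypoelliptic regularisation of \eqref{eq:fokkerplanck}, together with the assumption $I_p^{\bf P_0}(f_0|f_\infty) < \infty$, to ensure that $f(t,\cdot)$ is smooth and has finite higher-order moments for all $t>0$, so that all manipulations below are justified. Next I would recall from \cite{AE} that the Bakry--\'Emery machinery adapted to the degenerate setting produces a \emph{closed dissipation inequality for the Fisher information itself}: $\frac{d}{dt} I_p^{\bf P_0}(f(t)|f_\infty)$ can be bounded above in terms of $I_p^{\bf P_0}(f(t)|f_\infty)$, with the same spectral constants that govern the entropy. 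This is precisely the ingredient that allowed Arnold--Erb to obtain purely exponential decay of $I_p^{\bf P_0}$; the goal now is to upgrade it in the defective case.

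The upgrade would proceed exactly as in Sections \ref{sec:spectral}--\ref{sec:hyper}: $I_p^{\bf P_0}(f|f_\infty)$ can be interpreted as a weighted $L^p$ quantity of $\nabla(f/f_\infty)$, and $\nabla(f/f_\infty)$ satisfies a vector-valued Fokker--Planck type evolution whose drift is governed by $\C$ combined with $-\C^T$ acting componentwise (the commutator $[\nabla,L]$ produces exactly a first-order term involving $\C^T$). Consequently the eigenvalues of the effective drift of this derived system are sums of eigenvalues of $\C$, and in particular their minimal real part is still $\mu$ with the same maximal defect $n$. Applying Theorem \ref{thm:rate_of_decay_e2} (for $p=2$) to this derived system, and then using the non-symmetric hypercontractivity of Theorem \ref{thm:hyper} to pass from the finite $p$-moment hypothesis to $L^2$, yields the desired polynomial--exponential decay for $I_p^{\bf P_0}$ after a short initial time. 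On the remaining compact time interval $[0,t_0]$, one uses the fact (from \cite{AE}) that $I_p^{\bf P_0}$ is non-increasing along the flow to bound it by $I_p^{\bf P_0}(f_0|f_\infty)$, which is absorbed into the constant $c_p(f_0)$.

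The main obstacle is the bookkeeping in the derived vector-valued equation: the operator $\nabla$ does not commute with $L$, and one must verify that the extra commutator term $-\C^T\nabla(f/f_\infty)$, once incorporated, produces an evolution whose invariant measure is still a Gaussian (now for a vector-valued quantity) and whose spectral decomposition mirrors that of Theorem \ref{thm:spectraldecomposition}, with eigenvalues shifted by those of $\C$ but retaining the same maximal defect at real part $\mu$. Once this spectral correspondence is established, the rest of the argument is a direct transcription of the proofs of Theorem \ref{thm:rate_of_decay_e2} and Theorem \ref{thm:main}, with the constants now depending on $I_p^{\bf P_0}(f_0|f_\infty)$ rather than $e_p(f_0|f_\infty)$.
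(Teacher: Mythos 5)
Your first claim ($I_p^{\D}\le c\,I_p^{\bf P_0}$ via the matrix inequality $\D\le c\,{\bf P_0}$ and $\psi_p''\ge 0$) and your treatment of the initial time interval are fine. The genuine gap is in the large-time decay of $I_p^{\bf P_0}$, which you propose to obtain by redoing the spectral analysis for the derived vector-valued evolution of $\nabla(f/f_\infty)$ and then invoking hypercontractivity. Neither ingredient is available as stated. Theorem \ref{thm:spectraldecomposition}, Proposition \ref{prop:OPPSthmimproved} and Theorem \ref{thm:rate_of_decay_e2} are proved for the scalar operator $L$ on $L^2(\R^d,f_\infty^{-1})$; for the gradient system you would have to re-establish the orthogonal decomposition, the invariance of the analogues of $V_m$, the resolvent bounds, and in particular verify that the maximal defect of the relevant eigenvalue at real part $\mu$ is still exactly $n$ (defects can a priori accumulate when the defective action of $\C^T$ on the vector components is combined with a defective restriction of $L$). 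Calling this ``bookkeeping'' understates what is a full re-run of \S\ref{sec:spectral}. Worse, the hypercontractivity step has no justification at all: Theorem \ref{thm:hyper} applies to a non-negative, unit-mass solution $f$ with finite $e_p$ (or the weighted $L^1$ bound \eqref{eq:hypercondition}), whereas $\nabla(f/f_\infty)$ is sign-changing, not mass-normalized, and your hypothesis is finiteness of $I_p^{\bf P_0}(f_0|f_\infty)$, not of any quantity to which Theorem \ref{thm:hyper} speaks. So for $1<p<2$ your route to an $L^2$-type framework for the Fisher information is unsupported.

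The paper's proof avoids all of this and is much shorter: by Proposition 4.4 of \cite{AE}, $e_p(f_0|f_\infty)\le c\,I_p^{\bf P_0}(f_0|f_\infty)<\infty$, so Theorem \ref{thm:main} already gives the decay $(1+t^{2n})e^{-2\mu t}$ for the \emph{entropy}; this is then converted into decay of the \emph{Fisher information} by the quantitative hypoelliptic regularisation estimate of Theorem 4.8 of \cite{AE},
\begin{equation*}
I^{\bf P_0}_\psi\pa{f(\tau)|f_\infty}\;\le\;\frac{c}{\tau^{2\kappa+1}}\,e_\psi\pa{f_0|f_\infty},\qquad \tau\in(0,1],
\end{equation*}
applied on a sliding unit time window (start the equation at time $t-1$), while small times are handled by the purely exponential decay of $I_p^{\bf P_0}$ from Proposition 4.5 of \cite{AE}. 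You mention hypoelliptic regularisation only qualitatively (smoothness), but it is precisely this quantitative entropy-to-Fisher estimate that is the missing bridge in your argument; with it, no spectral theory for the gradient system and no new hypercontractivity result are needed.
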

\begin{proof}
We first note that Proposition 4.4 from \cite{AE} implies the estimate $e_p\pa{f_0|f_\infty}\le c I_p^{{\bf{P_0}}}(f_0|f_\infty)$ $<\infty$, and hence Theorem \ref{thm:main} applies. This decay of $e_p$ carries over to $I_p^{{\bf{P_0}}}$ due to the following two ingredients:
For small $t$ we can use the purely exponential decay of $I_p^{{\bf{P_0}}}$ as established in Proposition 4.5 of \cite{AE} (with the rate $2(\mu-\epsilon)$). And for large time we use the (degenerate) parabolic regularisation of the Fokker-Planck equation \eqref{eq:fokkerplanck}: As proven in Theorem 4.8 of \cite{AE} we have for all $\tau\in (0,1]$ that
\begin{equation*}%\label{eq:regularization}
I^{\bf{P}_0}_\psi(f(\tau)|f_\infty) \leq \frac{c_{k_0}}{\tau^{2\kappa+1}}e_{\psi}\pa{f_0|f_\infty},
\end{equation*} 
where $\psi$ is the generating function for an admissible relative entropy. And $\kappa>0$ is the minimal number such that there exists $\tilde{\lambda}>0$ with 
$$\sum_{j=0}^{\kappa} {\C}^j \D \pa{\C^T}^j \geq \tilde{\lambda} \bf{I}. $$
The existence of such $\kappa$ and $\tilde{\lambda}$ is guaranteed by Condition (C) and equivalent to the rank condition \eqref{rank-cond}- cf.\ Lemma 2.3 in \cite{AAS}.
\end{proof}

\bibliographystyle{plain}
\end{document}